\definecolor{darkblue}{rgb}{0.0,0.0,0.6}
\newtheorem{corollary}{Corollary}
\newtheorem{proposition}{Proposition}
\newtheorem{assumption}{Assumption}
\newtheorem{theorem}{Theorem}
\newtheorem{lemma}{Lemma}
\newtheorem{remark}{Remark}
\def\lhs#1{\text{LHS}#1}
\def\rhs#1{\text{RHS}#1}
\def\cred #1{{\color{black} #1}}
\def\an#1{{\color{black}#1}}
\def\ws#1{{\color{black}#1}}
\def\argmin{\mathop{\rm argmin}}
\def\0{{\bf 0}}
\def\1{{\bf 1}}
\def\argmin{\mathop{\rm argmin}}
\def\beq{\begin{equation*}}
	\def\eeq{\end{equation*}}
\def\bql{\begin{equation}}
	\def\eql{\end{equation}}
\def\bqn{\begin{eqnarray*}}
	\def\eqn{\end{eqnarray*}}
\def\bnl{\begin{eqnarray}}
	\def\enl{\end{eqnarray}}
\def\bma{\begin{bmatrix}}
	\def\ema{\end{bmatrix}}
\def\bmx{\begin{matrix}}
	\def\emx{\end{matrix}}
\def\ben{\begin{enumerate}}
	\def\een{\end{enumerate}}
\def\bit{\begin{itemize}}
	\def\eit{\end{itemize}}
\def\bei{\begin{itemize}}
	\def\eei{\end{itemize}}
\def\bet{\begin{tabular}}
	\def\eet{\end{tabular}}
\newcommand{\ba}{\mathbf{a}}
\newcommand{\bA}{\mathbf{A}}
\newcommand{\bB}{\mathbf{B}}
\newcommand{\bb}{\mathbf{b}}
\newcommand{\bc}{\mathbf{c}}
\newcommand{\bd}{\mathbf{d}}
\newcommand{\bJ}{\mathbf{J}}
\newcommand{\dbJ}{\nabla\mathbf{J}}
\newcommand{\R}{\mathbb{R}}
\newcommand{\GTVdir}{\mathcal{G}_\mathrm{TV}^\mathrm{dir}}
\newcommand{\G}{\mathcal{G}}
\newcommand{\V}{\mathcal{V}}
\newcommand{\E}{\mathcal{E}}
\newcommand{\A}{\mathcal{A}}
\newcommand{\bx}{\mathbf{x}}
\newcommand{\by}{\mathbf{y}}
\newcommand{\tby}{\tilde{\mathbf{y}}}
\newcommand{\bq}{\mathbf{q}}
\newcommand{\br}{\mathbf{r}}
\newcommand{\bz}{\mathbf{z}}
\newcommand{\bs}{\mathbf{s}}
\newcommand{\f}{\mathbf{f}}
\newcommand{\g}{\mathbf{g}}
\newcommand{\df}{\nabla\mathbf{f}}
\newcommand{\sdf}{\widetilde{\nabla}\mathbf{f}}
\newcommand{\sdfi}{\widetilde{\nabla}f_i}
\newcommand{\dfj}{\nabla f_j}
\newcommand{\sdg}{\widetilde{\nabla}\mathbf{g}}
\newcommand{\bh}{\mathbf{h}}
\newcommand{\sdh}{\widetilde{\nabla}\mathbf{h}}
\newcommand{\dbh}{\nabla\mathbf{h}}
\newcommand{\BW}{\widetilde{W}}
\newcommand{\one}{\mathbf{1}}
\newcommand{\Fro}{\mathrm{F}}
\newcommand{\T}{\top}
\newcommand{\spa}[1]{\mathrm{span}\{#1\}}
\newcommand{\nul}[1]{\mathrm{null}\{#1\}}
\newcommand{\Wco}{\mathbf{C}}
\newcommand{\wco}{C}
\newcommand{\wdo}{W}
\newcommand{\Ni}{\mathcal{N}_i}
\newcommand{\Nj}{\mathcal{N}_j}
\newcommand{\Niin}{\mathcal{N}_i^{\mathrm{in}}}
\newcommand{\st}{\text{s.t.}}
\newcommand{\sigmax}[1]{\sigma_{\max}\left\{#1\right\}}
\newcommand{\bu}{\mathbf{u}}
\newcommand{\bv}{\mathbf{v}}
\newcommand{\bV}{\mathbf{V}}
\newcommand{\dia}[1]{\mathrm{diag}\left\{#1\right\}}
\newcommand{\zero}{\mathbf{0}}
\newcommand{\Omg}{{\bm{\Omega}}}
\newcommand{\met}{\mathcal{M}}
\newcommand{\D}{\Delta}
\newcommand{\lams}[1]{\tilde{\lambda}_{\min}\{#1\}}
\newcommand{\laml}[1]{\lambda_{\max}\{#1\}}
\newcommand{\sigl}[1]{\sigma_{\max}\{#1\}}
\newcommand{\proj}{\mathcal{P}}
\newcommand{\Lag}{\mathcal{L}}
\newcommand{\mapA}{\mathcal{A}}
\newcommand{\mapB}{\mathcal{B}}
\newcommand{\bX}{\mathcal{X}}
\newcommand{\tL}{L'}
\newcommand{\sdhi}{\widetilde{\nabla}h}
\newcommand{\bxi}{\boldsymbol{\xi}}
\newcommand{\Lap}{\textbf{{\L}}}
\newcommand{\Laps}{{\textbf{\L}}_{\G}}
\newcommand{\eLap}{\textbf{\c{L}}}
\DeclareMathOperator*{\Min}{minimize}
\DeclareMathOperator*{\ST}{subject\ to}
\begin{document}

\title{\LARGE{Improved Convergence Rates for Distributed Resource Allocation}}
\author{Angelia Nedi\'c, Alex Olshevsky, and Wei Shi
\thanks{A. Nedi\'c and W. Shi are with the School of Electrical, Computer and Energy Engineering at the Arizona State University, Tempe, AZ 85281, United States. A. Olshevsky is with the Department of Electrical and Computer Engineering at the Boston University, Boston, MA 02215, United States. Corresponding author: Wei Shi. Email:
\href{mailto:Wilbur.Shi@asu.edu}{Wilbur.Shi@asu.edu}. The work has been supported by the  Office of Naval Research under grant number N000014-16-1-2245 and the Air Force under grant number AF FA95501510394.}}

\maketitle

\begin{abstract}
In this paper, we develop a class of decentralized algorithms for solving a convex resource allocation problem \an{in a network of $n$ agents, where the agent objectives are decoupled while the resource constraints are coupled.} \an{The agents communicate over a connected undirected graph, and they want to collaboratively determine a solution to the overall network problem, while each agent only communicates with its neighbors.} We first study the connection between the decentralized resource allocation problem and the decentralized consensus optimization problem. Then, \an{using a class of algorithms for solving consensus optimization problems}, we propose a novel class of decentralized schemes for solving resource allocation problems \an{in a distributed manner}. Specifically, we first \an{propose} an algorithm for solving the resource allocation problem with an $o(1/k)$ convergence rate guarantee when the agents' objective functions are generally convex (could be nondifferentiable) and per agent local convex constraints are allowed; We then propose a gradient-based algorithm for solving the resource allocation problem when per agent local constraints are absent and show that such scheme can achieve geometric rate when the objective functions are strongly convex and have Lipschitz continuous gradients. We have also provided scalability/network dependency analysis. Based on these two algorithms, we have further proposed a gradient projection-based algorithm which can handle smooth objective and simple constraints more efficiently. Numerical experiments demonstrates the viability and performance of all the proposed algorithms.
\end{abstract}

\begin{IEEEkeywords}
Resource allocation, economic dispatch, decentralized optimization, convergence rates
\end{IEEEkeywords}
\IEEEpeerreviewmaketitle
\newpage
\section{Introduction}
\an{\IEEEPARstart{T}his paper deals with a decentralized resource allocation problem, which is defined over a connected network of $n$ agents, as follows:}
\ws{
	\begin{subequations}\label{eq:basic}
		\begin{align}
			&\an{\Min\limits_{\bx=(x_1^\T;\ldots; x_n^\T)\in\R^{n\times p}}~\f(\bx)\triangleq }\sum\limits_{i=1}^n f_i(x_i)\label{eq:basic_line1} \\
			&\ST~\sum\limits_{i=1}^n (x_i-r_i)=0,\label{eq:basic_line2}\\
			&\qquad\qquad~~x_j\in\Omega_j,\quad \Omega_j\subseteq\R^{p}, \qquad\forall j=1,\ldots,n.\label{eq:basic_line3}
		\end{align}
	\end{subequations}
}
\an{
	%\begin{equation}\label{eq:basic}
	%\begin{array}{rcl}
	%&\Min\limits_{(x_1,\ldots, x_n)\in\R^{np}}&\sum\limits_{i=1}^n f_i(x_i)\\
	%&\ST&\sum\limits_{i=1}^n (x_i-r_i)=0,\\
	%& &x_j\in\Omega_j,\quad \Omega_j\subseteq\R^{p}, \qquad\forall j=1,\ldots,n.
	%\end{array}
	%\end{equation}
	For each agent $i$, the vector $x_i\in\R^p$ is its local decision variable. The objective 
	function $f_i: \R^p\rightarrow \R$ is convex and the constraint set $\Omega_i\subseteq\R^p$ is a nonempty closed and convex set, both of which are privately known by agent $i$ only. The equality constraints, $\sum_{i=1}^n (x_i-r_i)=0$, are coupling the agents' decisions, where $r_i\in\R^p$
	is a given resource demand vector for agent $i$.} 
%NOT QUITE?
% The part below does not add much
%In these constraints, different resources (entries of $x_i$) do not couple with each other, as such couplings can %be captured by the objective function $f_i(x_i)$.}
\subsection{Literature review}\label{sec:Lit}
\an{A particular problem that falls under the preceding resource allocation formulation is 
	the economic dispatch problem in which each $f_i$ is a quadratic function and 
	every constraint set $\Omega_i$ is a box,} when the direct current power flow model is used \cite{Seifi2011electric}. Problems sharing similar forms have received extensive attention 
\an{due to the emergence of smart city concepts.} For example, 
references \cite{Zhang2014efficient} and \cite {Guo2016distributed} both consider the economic dispatch in 
a smart grid with an extra consideration of a random wind power injection. 
Algorithms proposed in both references are accompanied with discussions of basic convergence properties. \ws{Some earlier theoretical papers which have focused on decentralized algorithm design for solving the ``unconstrained version'' ($\Omega_j=\R^p,\ \forall j$) of \eqref{eq:basic} are available in the literature \cite{necoara2013random,lakshmanan2008decentralized}. Reference \cite{necoara2013random} considers a class of algorithms that randomly pick pairs of neighbors to perform updates. Under convexity assumption, an $O(L/(k\lambda_2))$ rate on the objective optimality residual in expectation is derived over fixed graphs; under strong convexity assumption, an $O\left((1-\kappa_{\f}^{-1}\lambda_2)^k\right)$ geometric rate is obtained also on the expectation of the objective optimality residual. Here, $k$ is the number of iterations the concerned algorithm has performed, \an{and $L$ is the gradient Lipschitz constant for the objective function $\f$.} The quantity $\kappa_{\f}$ is the condition number of the function $\f$ which is a scalar (no less than $1$) defined as the ratio of the gradient Lipschitz constant $L$ and the strong convexity constant $\mu$ of $\f$. The quantity $\lambda_2$ is the second smallest eigenvalue of a certain graph-dependent matrix. With a uniform assignment of probabilities, $\lambda_2^{-1}$ scales at the order of $O(n^4)$ (though it is possible to considerably improve on this if the probabilities are chosen in a centralized way depending on the graph). Reference \cite{lakshmanan2008decentralized} gives an algorithm which is shown to have an $O(LBn^3/k)$ rate for the decay of the squared gradient consensus violation over time-varying graph sequences; here $B$ is a constant which measures how long it takes for a time-varying graph sequence to be jointly connected.} \an{Reference \cite{Kar2012distributed} proposes a ``consensus plus innovations'' 
	method for solving problem~\eqref{eq:basic}, and the convergence of the method is established
	for quadratic objectives $f_i$ under a diminishing step size selection. Based on the alternating direction method of multipliers (ADMM), reference~\cite{Chang2015multi} provides a class of algorithms which can handle problem \eqref{eq:basic} with convergence guarantees.}  \an{In particular, under the assumption that the objective functions are convex, the convergence properties are established; when the per-agent constraints \eqref{eq:basic_line3} are absent 
	(i.e., $\Omega_j=\R^p, \forall j$), under the assumptions that the objective functions are strongly convex and have Lipschitz continuous gradients, a linear convergence (geometric) rate is shown.} By using the ADMM, if a center (in a star-shaped network) is allowed to carry a part of computational tasks, a more general problem formulation beyond \eqref{eq:basic} can be handled. Such a formulation and its distributed algorithms have been found to be useful in Internet services over hybrid edge-cloud networks \cite{Huang2017collaborative}. Reference \cite{doan2017distributed} studies the special case when $\Omega_j=\R^p, \forall j$, and considers solving the problem over time-varying networks. 
	Under the  strong convexity and the gradient Lipschitz continuity of the objective function $\f$,
	%(same assumptions as in~ \cite{Chang2015multi}),
	the algorithm in reference~\cite{doan2017distributed} is proved to have a geometric convergence rate $O\left((1-\kappa_{\f}^{-1}n^{-2})^k\right)$. In other words, for 
	the algorithm in~\cite{doan2017distributed} to reach an $\varepsilon$-accuracy, the number of iterations needs to be of the order $O\left(\kappa_{\f}n^2\ln(\varepsilon^{-1})\right)$. 
	This translates to an $O(\kappa_{\f}n^2)$ scalability in the number $n$ of agents, and it is the best scalability result (with the size $n$ of the network) that currently exists in the literature.
	Reference~\cite{Doan2016distributed} proposes a dual-based algorithm with a diminishing step size for solving \eqref{eq:basic}, for which an $O(1/\sqrt{k})$ convergence rate is derived. 
	However, such algorithms with vanishing step sizes cannot be extended to handle problems with time-varying objectives and usually exhibits poor convergence. 
	\cred{A recent work~\cite{Aybat2016distributed} has proposed a class of algorithms to handle 
	the resource sharing problem under the conic constraints. The algorithms are built on a modified Lagrangian function and an ADMM-like scheme for seeking a saddle point of the Lagrangian function, 
	which has been shown to have an ergodic $O(1/k)$ rate for agents' objective functions and constraints violation. The problem formulation in \cite{Aybat2016distributed} treats \eqref{eq:basic} as a special case: the constraint \eqref{eq:basic_line2} is replaced by the more general $\sum\limits_{i=1}^n (R_ix_i-r_i)\in\mathcal{K}$ where $\mathcal{K}$ is a convex cone and $R_i$ is a matrix that couples local resources. It was required that the interior of $\mathcal{K}$ is nonempty which does not apply to \eqref{eq:basic_line2}. The authors recently removed such nonempty interior requirement on $\mathcal{K}$ during our preparation of this paper. 
	We also would like to point out that our rates are non-ergodic and the measures/criteria used for our rates have some advantages in practical uses (see the comments following Theorem \ref{theorem:o_1_k}).} 
	Based on consensus and push-sum approaches~\cite{Nedic2013}, a recent reference \cite{yang2016distributed} 
	proposes a distributed algorithm for solving problem~\eqref{eq:basic} 
over time-varying directed networks and provides convergence guarantees. 
\an{Aside from the above algorithms which are all discrete-time methods, 
	there are some continuous-time algorithms such as the one in reference~\cite{cherukuri2016initialization},
	where convergence under general convexity assumption is ensured.}
% for such continuous time algorithms \cite{cherukuri2016initialization}.}
\an{Table \ref{tab: sumup} summarizes the most relevant references with the convergence rates and the scalability results for distributed algorithms for solving problem~\eqref{eq:basic}, and it illustrates the results of this paper with respect to the existing work.} A very recent work by \cite{Scaman2017optimal} proposes algorithms for decentralized consensus optimization for smooth and strongly convex objectives. By applying Nesterov's acceleration to the dual problem of the consensus optimization, the algorithms in \cite{Scaman2017optimal} attain optimal geometric convergence rate of the first-order algorithms, and that a decentralized resource allocation algorithm can scale in the order of $O(\sqrt{\kappa_{\f}}n)$ with the number $n$ of agents. Nevertheless, to enjoy this rate/scalability improvement, one needs to know the strong convexity constant $\mu$ and the gradient Lipschitz constant $L$. In contrast, the algorithms we study in this paper only ask for knowing the  parameter $L$. Furthermore, the algorithms and analysis in \cite{Scaman2017optimal} are specified for smooth strongly convex objectives. It is unclear how one can modify such schemes in order to solve convex problems, nonsmooth problems, or problems with constraints/projections. 

\begin{table}[H]
	\centering\caption{The convergence rates and scalability results for distributed resource allocation algorithms \an{for problem~\eqref{eq:basic}, which is convex in all instances.} The scalar $L$ is the Lipschitz-gradient constant, while the condition number $\kappa_\f=L/\mu$ where $\mu$ is the strong convexity constant for $\f$. The rates are given in terms of the number $k$ of iterations, while the ``scalability" column shows how the algorithm's geometric rate depends on the number of agents, $n$, and the condition number, $\kappa_\f$. By saying ``unconstrained'' in the table, 
		we mean that $\Omega_i=\R^p,\ \forall i$. The quantity $\lambda_2$ used in reference \cite{necoara2013random} is the second smallest eigenvalue of a certain graph-dependent matrix (see Subsection \ref{sec:Lit} of this paper for more details).\label{tab: sumup}}
	\smallskip
	\begin{tabular}{|c|c|c|c|c|}
		\hline\hline
		Reference & uncon. strongly convex& unconstrained & constrained &  scalability\\
		\hline\hline
		\cite{necoara2013random} & geometric & $O(1/k)$ & -- &  $O(\kappa_{\f}\lambda_2^{-1})$\\
		\hline
		\cite{lakshmanan2008decentralized} & -- & $O(1/k)$ & -- & --\\
		\hline 
		\cite{Chang2015multi} & geometric & -- & -- &   --\\
		\hline
		\cite{doan2017distributed} & geometric & -- & $O(1/k)$ & $O(\kappa_{\f}n^2)$\\		
		\hline
		\cite{Doan2016distributed} & -- & $O(1/\sqrt{k})$ & $O(1/\sqrt{k})$ & -- \\		  
		\hline
		\cite{Aybat2016distributed} & geometric & $O(1/k)$ & $O(1/k)$ &-- \\
		\hline
		this paper & geometric & $o(1/k)$ & $o(1/k)$ &$O(n^2+\sqrt{\kappa_{\f}}n)$\\	
		\hline
	\end{tabular}
\end{table}

\subsection{Our contributions}
In this paper, we design an algorithm for solving problem~\eqref{eq:basic} from an unconventional point of view. 
We consider synchronous updates and connected undirected communication networks with time-varying topologies. 
For general convex functions $f_i$ (without requirements of strong convexity and smoothness), our basic 
method has $o(1/k)$ convergence rate\footnote{A nonnegative sequence $\{a_k\}$ is said to be convergent to $0$ at an $O(1/k)$ rate if ${\lim\sup}_{k\rightarrow\infty} ka_k<+\infty$. In contrast, it is said to have an $o(1/k)$ rate if ${\lim\sup}_{k\rightarrow\infty} ka_k=0$.}, 
which is slightly better than the sub-linear convergence rates achieved in the literature. 
\an{When the objective functions $f_i$ are strongly convex and smooth, 
	and $\Omega_i=\R^p$ for all $i$, 
	we show a geometric convergence of the method. 
	Furthermore, we  
	find that the algorithm scales in the order of $O(n^2+\sqrt{\kappa_{\f}}n)$, with the number $n$ of agents, which is better than the best scaling that has been currently achieved in the literature 
	(see reference~\cite{doan2017distributed} where the scaling is $O(\kappa_{\f}n^2)$).
	For the case when the objective function is smooth and $\Omega_i=\R^p$ for all $i$, we also 
	provide a gradient-based algorithm that achieves an $o(1/k)$ rate under convexity assumption, 
	and a geometric rate under the strong convexity assumption for the objective functions $f_i$. 
	Finally, based on these two methods, we provide a combined optimization strategy which finds an 
	optimal solution of problem~\eqref{eq:basic} by using a gradient-projection at each iteration.}
%\an{RATE RESULTS  for this???} \ws{I did not get it out. Too messy and time consuming.}

%%%%%%%%%%%%%%%%%%%%%%%%%%%%%%%%%%%%%%%%
\section{Resource Allocation and Its Connection to Consensus Optimization}
\subsection{Notation and basic assumptions}\label{sec:notation}
%%%%%%%%%%%%%%%%%%%%%%%%%%%%%%%%%%%%%%%%
Some of the notation may not be standard 
but it enables us to present our algorithm and analysis in a compact form.
Throughout the paper, we let agent $i$ hold a local variable $x_i$, a function $f_i$, and a constraint set 
$\Omega_i$ of problem~\eqref{eq:basic}.
We define 
\[\Omg\triangleq\Omega_1\times\cdots\times\Omega_n.\] 

\an{Our basic assumption is that problem~\eqref{eq:basic} is convex, which is formalized as follows.}
\begin{assumption}(Functional properties)\label{assum:all}
	\an{For any $i\in\{1,2,\ldots,n\}$, the function $f_i:\R^p\to\R$ is convex} while the set $\Omega_i\subseteq\R^p$ is nonempty, closed and convex.
	% Since functions $f_i$ are defined everywhere $\f$ is proper over the entire space
	% furthermore, the objective of \eqref{eq:basic}, function $\f:\R^p\to\R$, is proper\footnote{A function $f$ is proper over a set $\mathrm{S}$ if $f(\tilde{x}_a)<+\infty$ for at least one element $\tilde{x}_a\in\mathrm{S}$ and $f(\tilde{x}_b)>-\infty$ for all $\tilde{x}_b\in\mathrm{S}$.} over the feasible set specified by the constraints \eqref{eq:basic_line2} and \eqref{eq:basic_line3}.
\end{assumption}

We define $g_i$ as the indicator function of the set $\Omega_i$, namely,
\[
g_i(x_i)=\left\{
\begin{array}{ll}
0,&\text{ if $x_i\in\Omega_i$,}\\
+\infty,&\text{ if $x_i\notin\Omega_i$.}	
\end{array}
\right.
\]
We also define a composite function $h_i$ for agent $i$, as follows:
	\[h_i\triangleq f_i+g_i:\R^p\rightarrow\R\cup\{+\infty\},\quad\forall i=1,\ldots,n.\] 
	Under Assumption~\ref{assum:all}, the functions $g_i:\R^p\to\R\cup\{+\infty\}$ are proper, closed, and convex,
	and so are the functions $h_i=f_i+g_i$ since the domain of $f_i$ is $\R^p$.
	Furthermore, under Assumption~\ref{assum:all}, the subdifferential sets $\partial h_i(x_i)$ satisfy (see Theorem 23.8 of~\cite{Rock1970})
	\begin{equation}\label{eq:subdif}
		\partial h_i(x_i)=\partial f_i(x_i)+\partial g_i(x_i)\qquad\hbox{for all $x_i\in\R^p$.}\end{equation}
	 The equality in \eqref{eq:subdif} holds when $\text{ri}(\text{dom} \{f_i\})\bigcap\text{ri}(\text{dom} \{g_i\})\neq\emptyset$, where $\text{ri}(\cdot)$ denotes the relative interior of a set and 
	 $\text{dom}\{\cdot\}$ is the (effective) domain of a function (see Section 4 of~\cite{Rock1970} for the definition of ``(effective) domain''). See also Remark 16.46 and Corollary 16.48 of \cite{Bauschke2011convex} for more conditions and comments for \eqref{eq:subdif} to hold. Note that we have not imposed any differentiability on $f_i$'s. 
	 Moreover, since $\partial g_i(x_i)$ coincides with the normal cone 
	of $\Omega_i$ at $x_i\in\Omega_i$, we have that
	\[\partial g_i(x_i)\ne \emptyset\qquad\hbox{for all $x_i\in\Omega_i$}.\]

In addition to the network objective $\f(\bx)$ defined in~\eqref{eq:basic_line1}, we introduce two more network-wide aggregate functions,
\begin{equation}\label{eq:fx_and_gy}
	%\f(\bx)\triangleq\sum\limits_{i=1}^{n} f_i(x_i),\qquad 
	\g(\bx)\triangleq\sum\limits_{i=1}^{n} g_i(x_i),\qquad\ \text{and}\qquad 
	\bh(\bx)\triangleq\sum\limits_{i=1}^{n} h_i(x_i),
\end{equation}
where
\begin{equation}\label{eq:x}
	\bx\triangleq\left(
	\begin{array}{ccc}
		\textrm{---}& x_1^\T & \textrm{---} \\
		\textrm{---}& x_2^\T & \textrm{---} \\
		&\vdots& \\
		\textrm{---}& x_n^\T & \textrm{---} \\
	\end{array}
	\right)\in\R^{n\times p}.
\end{equation}
\ws{Similarly, we define a matrix $\br$ by using the vectors $r_i$, $i=1,\ldots,n$.}

\an{Letting $\widetilde{\nabla} f_i(x_i)$ be a subgradient of $f_i$ at $x_i$,
	we construct a matrix $\sdf(\bx)$ of subgradients $\widetilde{\nabla} f_i(x_i)$, as follows:}
\begin{equation}\label{eq:dfx}
	\sdf(\bx)\triangleq\left(
	\begin{array}{ccc}
		\textrm{---}& (\widetilde{\nabla} f_1(x_1))^\T & \textrm{---} \\
		\textrm{---}& (\widetilde{\nabla} f_2(x_2))^\T & \textrm{---} \\
		&\vdots & \\
		\textrm{---}& (\widetilde{\nabla} f_n(x_n))^\T & \textrm{---} \\
	\end{array}
	\right)\in\R^{n\times p},
\end{equation}
and, similarly, the matrices $\sdg(\bx)$ and $\sdh(\bx)$ are defined using 
subgradients of $g_i$ and $h_i=f_i+g_i$ at $x_i$, respectively. 
We drop the tilde in the notation $\widetilde\nabla$ when the function under consideration 
	is differentiable (i.e., a subdifferential set contains only a gradient).
Each row $i$ of $\bx$, $\br$, $\sdf(\bx)$, $\sdg(\bx)$, and $\sdh(\bx)$ corresponds to the information available 
to agent $i$ only. 

\an{We use $\1$ to denote a vector with all entries equal to 1, where the size of the vector is to be understood from the context.
	We say that a matrix $A\in\R^{n\times p}$ is \emph{consensual} if its rows are identical, i.e., 
	$A_{i:}=A_{j:}$ for all agents $i$ and $j$.}
%The analysis and results of this paper hold for all $p\geq1$. The reader can assume $p=1$ for convenience (so $\bx$ and $\sdf$ become vectors) without loss of generality. 
For a given matrix $A$, $\|A\|_\Fro$ stands for its Frobenius norm, while $\sigmax{A}$ stands for its spectral norm (largest singular value). The largest eigenvalue of 
a symmetric positive semidefinite matrix $A$ is denoted by $\laml{A}$, while its
	smallest non-zero eigenvalue is denoted by $\lams{A}$. For any matrix $A\in\R^{m\times n}$, the set $\nul{A}\triangleq\{x\in\R^n\mid Ax=0\}$ is the null space of $A$, \an{while} 
the set $\spa{A}\triangleq \{y\in\R^m\mid y=Ax,\forall x\in\R^n\}$ is the linear span of all the columns of $A$. Given a matrix $A\in\R^{m\times p}$ and a matrix $B\in\R^{m\times p}$, the standard inner product of them is represented as $\langle A,B\rangle=\text{Trace}\{A^\T B\}$. Given a positive (semi)definite square matrix $\met\in\R^{m\times m}$, 
we define the $\met$-weighted (semi-)norm $\|A\|_{\met}=\sqrt{\langle A,\met A\rangle}$. In the analysis, we use the abbreviation $\lhs{i}$ to refer to the $i$-th summand in the left-hand-side of an (in)equality; likewise, $\rhs{i}$ is used to refer to the $i$-th summand in the right-hand-side of an (in)equality. 

To model the underlying communication network for the agents, we use a simple (no self-loop) undirected graph, where $[n]=\{1,2,\ldots,n\}$ is the vertex set 
and $\E$ is the edge set. \ws{We say that an $n\times n$ matrix $A$ is compatible with the graph $\G$ when the following property holds: $\forall i,j\in[n]$, the $(i,j)$-th entry of $A$ is zero if neither \an{$\{i,j\}$} is an element of $\E$ nor $i\neq j$.} \an{We use ${\cal N}_i$ to denote the set of neighbors of agent $i$ in the graph $\G$,
	i.e., ${\cal N}_i=\{j\in[n]\mid \{i,j\}\in \E\}.$}

Let $\Laps$ denote the (standard) Laplacian matrix associated with the graph $\G$,
i.e., $\Laps=D-J$, where $D$ is the diagonal matrix with diagonal entries $D_{ii}=d_i$ and $d_i$ 
being the number of edges incident to node $i$, 
while $J$ is the graph adjacency matrix (with $J_{ij}=1$ when $\{i,j\}\in \E$ and $J_{ij}=0$ otherwise). A few facts about $\Laps$ are that $\Laps$ is compatible with $\G$, symmetric and positive semidefinite.

In our algorithm, we will use a matrix $\Lap=[\text{\L}_{ij}]$ whose behavior is ``close or the same'' to $\Laps$, in the sense of the following assumption. 

\begin{assumption}[Graph connectivity and null/span property]\label{ass:conn_and_prop}
	The graph $\G$ is connected and a matrix $\Lap$ is compatible with the graph $\G$. 
	\ws{Furthermore, $\Lap=U^\T U$ for some full row-rank matrix $U\in\R^{(n-1)\times n}$} and 
	\an{$\nul{U}=\{a\one\in\R^n \mid a\in\R\}$.}
\end{assumption}
Note that Assumption \ref{ass:conn_and_prop} equivalently says that $\nul{U}=\nul{\Lap}$ and thus $x_1=x_2=\cdots=x_n\Leftrightarrow\Lap\bx=0\Leftrightarrow U\bx=0$.
\an{The matrix $\Lap$ can be chosen in several different ways:
	\begin{itemize}
		\item[(i)] Since the graph Laplacian $\Laps$ satisfies Assumption~\ref{ass:conn_and_prop}, we can choose $\Lap=\Laps$. In this case, each agent needs to know the number of its neighbors (its degree) and $\Lap$ can be constructed without any communication among the agents.
		\item[(ii)] We can let $\Lap=\Laps/\laml{\Laps}$. The network needs $\laml{\Laps}$ to configure this matrix but a preprocessing to retrieve $\laml{\Laps}$ is possible \cite{tran2014distributed}.
		\item[(iii)] We can also choose $\Lap=0.5(I-W)$ where $W$ is a symmetric doubly stochastic matrix that is compatible with the graph $\G$ and $\laml{W-\one\one^\T/n}$ is strictly less than $1$. 
		This matrix can be constructed in the network through a few rounds of local interactions between the agents since some local strategies for determining $W$ exist, such as the Metropolis-Hasting rule which requires only one round of local interactions \cite{xiao2006,bacsar2016convergence}.
	\end{itemize}
}

\an{We will discuss the specific choices of $\Lap$ in some of our results to simplify analysis or to point out to interesting results.}

%------------------------------------------------------------------------------------
\subsection{The resource allocation and consensus optimization problems}\label{ss:resalprob}
%------------------------------------------------------------------------------------
In this subsection, we investigate the first-order optimality conditions for problem~\eqref{eq:basic} and for consensus optimization.
With the notation introduced in the preceding section, 
the resource allocation problem~\eqref{eq:basic} can be compactly given by	
\begin{equation}\label{eq:F}
	\begin{array}{cl}
		\min\limits_{\bx\in\R^{n\times p}} & \bh(\bx)=\sum\limits_{i=1}^n h_i(x_i),\\
		\st & \one^\T(\bx-\br)=\zero.
	\end{array}
\end{equation}
%where $\bh(\bx)$ is some general nonsmooth convex function  and 
\an{where
	$\one$ is a vector of appropriate dimension whose entries are all equal to $1$.
	By using the Lagrangian function, 
	we can write down the optimality conditions for problem~\eqref{eq:F} in a special form, as given in the
	following lemma.}
\begin{lemma}[First-order optimality condition for \eqref{eq:F}]
	\label{lem:opc-res}
	\an{Let Assumptions \ref{assum:all} and \ref{ass:conn_and_prop}} hold, and let $c\ne0$ be a given scalar.
	Then,  
	$\bx^*$ is an optimal solution of \eqref{eq:F} 
	if and only if there exists a matrix \an{$\bq^*\in\R^{(n-1)\times p}$} 
	such that the pair $(\bx^*,\bq^*)$ satisfies the following relations:
	\begin{subequations}\label{eq:opt_cond}
		\begin{align}
			&\bx^*-\br+c U^\T \bq^*=\zero,\label{eq:oc_line1} \\
			&U\sdh(\bx^*)=\zero.\label{eq:oc_line2}
		\end{align}
	\end{subequations}
	where $U$ is the matrix defined in Assumption~\ref{ass:conn_and_prop}.
	% then $\bx^*$ is an optimal solution of \eqref{eq:F}.
\end{lemma}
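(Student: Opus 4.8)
The plan is to derive the stated optimality conditions by writing down the standard Lagrangian/KKT conditions for the equality-constrained convex problem~\eqref{eq:F} and then re-expressing the resulting multiplier relation in terms of the factor $U$ from Assumption~\ref{ass:conn_and_prop}. First I would note that problem~\eqref{eq:F} is a convex program (by Assumption~\ref{assum:all}, each $h_i$ is proper, closed, convex) with only affine constraints $\one^\T(\bx-\br)=\zero$, so no constraint qualification beyond feasibility is needed: $\bx^*$ is optimal if and only if there exists a multiplier matrix $\lambda\in\R^{1\times p}$ (one row, since the constraint is $p$-dimensional) such that $\zero\in\partial\bh(\bx^*)-\one\lambda$ in the appropriate sense, i.e., there is a choice of subgradient $\sdh(\bx^*)$ with $\sdh(\bx^*)=\one\lambda$, together with primal feasibility $\one^\T(\bx^*-\br)=\zero$. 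The content of the lemma is that this pair of conditions is equivalent to the pair~\eqref{eq:opt_cond} after a change of multiplier variables.

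The key algebraic step is the following linear-algebra fact, which I would state and prove as the heart of the argument: for a matrix $M\in\R^{n\times p}$, the condition ``$M$ has identical rows,'' i.e., $M=\one\lambda$ for some $\lambda$, is equivalent to $UM=\zero$, because $\nul{U}=\spa{\one}$ by Assumption~\ref{ass:conn_and_prop} (applied columnwise to $M$). This immediately turns the stationarity condition $\sdh(\bx^*)=\one\lambda$ into~\eqref{eq:oc_line2}, namely $U\sdh(\bx^*)=\zero$. For~\eqref{eq:oc_line1}, I would start from primal feasibility $\one^\T(\bx^*-\br)=\zero$, which says each column of $\bx^*-\br$ is orthogonal to $\one$, hence lies in $\spa{\one}^\perp=\nul{U}^\perp=\spa{U^\T}$ (using that $U$ has full row rank, so $\spa{U^\T}$ is exactly the orthogonal complement of $\nul{U}$). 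Therefore there exists $\bq^*\in\R^{(n-1)\times p}$ with $\bx^*-\br=-cU^\T\bq^*$ for the given nonzero scalar $c$ (the scalar $c$ is harmless: absorb it into $\bq^*$), which is precisely~\eqref{eq:oc_line1}. Conversely, if~\eqref{eq:oc_line1} holds then $\one^\T(\bx^*-\br)=-c\one^\T U^\T\bq^*=-c(U\one)^\T\bq^*=\zero$, recovering feasibility, and if~\eqref{eq:oc_line2} holds then $\sdh(\bx^*)\in\nul{U}$ columnwise, so $\sdh(\bx^*)=\one\lambda$ for some $\lambda$, giving back stationarity; combined with convexity this yields optimality of $\bx^*$.

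I would organize the write-up in two directions. For necessity, assume $\bx^*$ solves~\eqref{eq:F}; invoke convexity plus the affine constraint to get a Lagrange multiplier and the stationarity inclusion $\zero\in\partial\bh(\bx^*)-\one\lambda$, pick the corresponding subgradient to realize $\sdh(\bx^*)$, and then apply the two linear-algebra translations above to produce $\bq^*$ and~\eqref{eq:opt_cond}. For sufficiency, assume~\eqref{eq:opt_cond} holds for some $(\bx^*,\bq^*)$; reverse the translations to recover feasibility and the existence of $\lambda$ with $\sdh(\bx^*)=\one\lambda$, and conclude optimality from the first-order sufficient condition for convex problems (the subgradient of the objective is a nonnegative combination — here, aligned with — the constraint gradients). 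The main obstacle, and the place to be careful, is the subdifferential bookkeeping: $\bh$ is extended-real-valued (because of the indicator parts $g_i$), so I must make sure the KKT conditions are stated with the correct notion of subgradient on $\dom\bh=\Omg$ and that the relation $\partial\bh(\bx^*)=\partial\f(\bx^*)+\partial\g(\bx^*)$ (the separable analogue of~\eqref{eq:subdif}) is legitimately used, together with the fact noted in the excerpt that $\partial h_i(x_i)\ne\emptyset$ on $\Omega_i$ so that a valid subgradient matrix $\sdh(\bx^*)$ exists. Apart from this, the remaining steps are routine linear algebra and the standard Lagrangian duality for affinely constrained convex programs.
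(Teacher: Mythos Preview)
Your proposal is correct and follows essentially the same approach as the paper: write the Lagrangian KKT conditions $\one^\T(\bx^*-\br)=\zero$ and $\sdh(\bx^*)=\one(\by^*)^\T$, then use $\nul{U}=\spa{\one}$ columnwise to rewrite stationarity as $U\sdh(\bx^*)=\zero$ and $\one^\perp=\spa{U^\T}$ to rewrite feasibility as $\bx^*-\br=-cU^\T\bq^*$. Your write-up is, if anything, slightly more careful than the paper's about the converse direction and about why no constraint qualification beyond affinity is needed.
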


\begin{proof}
	The Lagrangian function of problem \eqref{eq:F} is
	$$
	\Lag(\bx,\by)=\bh(\bx)+\langle \by,(\bx-\br)^\T\one\rangle,
	$$
	where $\by\in\R^p$ contains all the multipliers of the constraints $(\bx-\br)^\T\one=\zero$. 
	\an{Since the problem is convex, 
		the necessary and sufficient optimality condition for the primal-dual pair $(\bx^*,\by^*)$} is
	\begin{subequations}
		\begin{align*}
			&\one^\T(\bx^*-\br)=\zero,\\
			&\zero\in\partial\bh(\bx^*)+\one(\by^*)^\T,
		\end{align*}
	\end{subequations}
	where $\partial\bh(\bx^*)$ is to be understood as a collection of all matrices
	whose every row $i$ is given by \an{some subgradient $(\widetilde\nabla h_i(x_i^*))^\T$ of $h_i(x_i)$ at $x_i=x_i^*$. 
		Noting that the condition $\zero\in\partial\bh(\bx^*)+\one(\by^*)^\T$ is equivalent to 
		the requirement that there exists an $\bx^*$ such that $\zero=\sdh(\bx^*)+ \one(\by^*)^\T$,
		the optimality condition for the primal-dual pair $(\bx^*,\by^*)$ can be written as:}
	\begin{subequations}\label{eq:lemma1_proof1}
		\begin{align}
			&\one^\T(\bx^*-\br)=\zero,\label{eq:lemma1_proof1_line2}\\
			&\sdh(\bx^*)= - \one(\by^*)^\T.\label{eq:lemma1_proof1_line1}	
		\end{align}
	\end{subequations}
	
		By Assumption~\ref{ass:conn_and_prop}, the full row-rank matrix $U\in\R^{(n-1)\times n}$ has the set 
		$\{a\one\mid\forall a\in\R\}$ as its null space, implying that $U \bq=0$ if and only if $\bq=a\one$ for some scalar $a$.
	Applying this result to each column of \eqref{eq:lemma1_proof1_line1}, we obtain its equivalent relation \eqref{eq:oc_line2}.
	Relation~\eqref{eq:lemma1_proof1_line2} states that the vector $\one$ is orthogonal to every column 
	of the matrix $\bx^*-\br$. The same is true if we scale $\bx^*-\br$ by a factor  $\frac{1}{c}$, 
	\an{where $c\ne 0$ is the given scalar in the lemma.}
	Since $\one^{\perp}= (\nul{U})^{\perp}=\spa{U^\T}$,
	it implies that $\one^\T\frac{1}{c}(\bx^*-\br)=\zero$ if and only if \an{every column of matrix 
		$\frac{1}{c}(\bx^*-\br)$ lies in the set $\spa{U^\T}$.
		Let $-q^*_j\in\R^{n-1}$ be a vector such that $U^\T(-q_j^*)$ is equal to the $j$-th column of 
		$\frac{1}{c}(\bx^*-\br)$, for $j=1,\ldots,p$. Take these column vectors as the columns of a matrix $\bq^*$, 
		for which after multiplying by $c$, we have $\bx^*-\br +cU^\T\bq^*=\zero$,  thus showing that relation~\eqref{eq:lemma1_proof1_line2} is equivalent to relation~\eqref{eq:oc_line1}.}
\end{proof}

\an{It turns out that the optimality conditions for the resource optimization problem, as given in
	Lemma~\ref{lem:opc-res}, have an interesting connection with the optimality conditions for the consensus optimization problem. In order to expose this relation, we next discuss the consensus optimization problem, 
	which is given as follows:} 
\begin{equation}\label{eq:F_consensus}
	\begin{array}{cl}
		\min\limits_{\bx\in\R^{n\times p}} & \bh(\bx)=\sum\limits_{i=1}^n
		h_i(x_i),\\
		\st & x_1=x_2=\cdots=x_n.
	\end{array}
\end{equation}
The local objective of each agent in \eqref{eq:F_consensus} is the same as that in \eqref{eq:F}. 
\an{Unlike the resource allocation problem, 
	instead of having $\sum_{i=1}^n (x_i-r_i)=0$ as constraints, 
	here we have the consensus constraints, i.e., $x_1=x_2=\cdots=x_n$.}

The first-order optimality condition of \eqref{eq:F_consensus} is stated in the following lemma. 
\begin{lemma}[First-order optimality condition for \eqref{eq:F_consensus}] 
	\label{lem:opc-con}
	\an{Let Assumptions~\ref{assum:all} and~\ref{ass:conn_and_prop} hold, and let $c\ne 0$ be a given scalar.}
	Then, $\bx^*$ is an optimal solution of \eqref{eq:F_consensus} if and only if there exists a matrix \an{$\bq^*\in\R^{(n-1)\times p}$} 
	such that the pair $(\bx^*,\bq^*)$ satisfies the following relations:
	\begin{subequations}\label{eq:opt_cond_consensus}
		\begin{align}
			&\sdh(\bx^*)+ c U^\T \bq^*=\zero,\label{eq:oc_line1_consensus}\\
			&U\bx^*=\zero,\label{eq:oc_line2_consensus}
		\end{align}
	\end{subequations}
	where $U$ is the matrix defined in Assumption~\ref{ass:conn_and_prop}.
\end{lemma}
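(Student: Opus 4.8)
The plan is to follow the same route as in the proof of Lemma~\ref{lem:opc-res}, with the coupling constraint replaced by the consensus constraint. First I would use Assumption~\ref{ass:conn_and_prop} to rewrite the feasible set: since $\nul{U}=\{a\one\mid a\in\R\}$, the requirement $x_1=x_2=\cdots=x_n$ is equivalent to $U\bx^*=\zero$, which is already relation~\eqref{eq:oc_line2_consensus}. So problem~\eqref{eq:F_consensus} is the minimization of the proper closed convex function $\bh$ over the affine set $\{\bx\mid U\bx=\zero\}$.

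Next I would introduce the Lagrangian $\Lag(\bx,\bq)=\bh(\bx)+\langle\bq,U\bx\rangle$ with multiplier matrix $\bq\in\R^{(n-1)\times p}$, and invoke convexity: $\bx^*$ is optimal for~\eqref{eq:F_consensus} if and only if there is a multiplier $\tilde\bq$ with $U\bx^*=\zero$ and $\zero\in\partial\bh(\bx^*)+U^\T\tilde\bq$. Exactly as in Lemma~\ref{lem:opc-res}, the inclusion $\zero\in\partial\bh(\bx^*)+U^\T\tilde\bq$ is equivalent to saying that for a suitable selection of subgradients, gathered row-wise into the matrix $\sdh(\bx^*)$, one has $\sdh(\bx^*)+U^\T\tilde\bq=\zero$. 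To match the statement with the free nonzero scalar $c$, I would then rescale: put $\bq^*=\tilde\bq/c$, which is a bijection because $c\ne 0$, and obtain $\sdh(\bx^*)+cU^\T\bq^*=\zero$, i.e.\ relation~\eqref{eq:oc_line1_consensus}. The converse is immediate from convexity: if $(\bx^*,\bq^*)$ satisfies~\eqref{eq:opt_cond_consensus}, then $\bx^*$ is feasible and each column of $-\sdh(\bx^*)=cU^\T\bq^*$ lies in $\spa{U^\T}=(\nul{U})^\perp$, so the chosen subgradient of $\bh$ at $\bx^*$ is orthogonal to every feasible direction, which certifies global optimality.

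The step I expect to be the actual obstacle is the one that is (tacitly) used in Lemma~\ref{lem:opc-res} as well: the existence of the multiplier $\tilde\bq$ in the ``only if'' direction, together with the validity of the subdifferential sum rule, when $\bh$ is extended-real-valued (it incorporates the indicator functions $g_i$). Here I would lean on the affineness of the constraint set $\{\bx\mid U\bx=\zero\}$ --- for a convex program whose constraints are purely affine, a Lagrange multiplier exists at an optimum and $\partial\bh(\bx^*)+\{U^\T\bq\mid\bq\in\R^{(n-1)\times p}\}$ is the full subdifferential of the composite objective (cf.\ Theorem~23.8 of~\cite{Rock1970}, already invoked in~\eqref{eq:subdif}). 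Everything else in the argument is a transcription of the proof of Lemma~\ref{lem:opc-res} with the roles of the ``feasibility'' and ``stationarity'' relations interchanged, so no new computation is needed.
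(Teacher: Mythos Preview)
Your proposal is correct and follows exactly the approach the paper intends: the paper does not spell out a proof but simply states that it is ``basically the same'' as Lemma~3.1 of~\cite{Shi2015_2}, using the decomposition $\Lap=U^\T U$ in place of $U=\sqrt{\Lap}$, which amounts precisely to the Lagrangian argument you outline---the mirror of the proof of Lemma~\ref{lem:opc-res} with the roles of the feasibility and stationarity relations exchanged. Your explicit attention to the constraint-qualification issue (existence of the multiplier when $\bh$ is extended-real-valued) is, if anything, more careful than what the paper records for either lemma.
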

The proof for this lemma is basically the same to that for Lemma 3.1 of reference \cite{Shi2015_2} only that we use the decomposition $\Lap=U^\T U$ while reference \cite{Shi2015_2} uses $U=\sqrt{\Lap}$.
%--------------------------------------
\subsection{The mirror relationship}\label{sec:mirror}
\ws{It is known that the Lagrangian dual problem of the resource allocation problem is a consensus optimization problem (see the discussion around equations (4)$\sim$(6) in \cite{Chang2015multi}). As having been pointed out in reference \cite{Chang2015multi}, a distributed optimization method that can solve the consensus optimization problem may also be used for the resource allocation problem through solving the dual of the resource allocation problem. Here, we will provide more special relations these two problems have which leads to a class of resource allocation algorithms following the design of a class of decentralized consensus optimization algorithms. Due to such special relations, it is possible that one can give a decentralized resource allocation algorithm without investigating the Lagrangian dual relationship between the above mentioned two problems.}

\ws{The optimality conditions of the resource allocation problem \eqref{eq:F} and the consensus optimization problem \eqref{eq:F_consensus} are summed up in the following box:}
\begin{table}[H]
	\centering\caption{Summary of optimality conditions\label{tab: opt}}
	\begin{tabular}{c|c}
		\hline\hline
		Opt. Cond. of Resource Allocation,  \eqref{eq:opt_cond} & Opt. Cond. of Consensus Optimization, \eqref{eq:opt_cond_consensus}\\
		\hline\hline
		$\bx^*-\br=-c U^\T \bq^*$ & $\sdh(\bx^*)=-c U^\T \bq^*$\\	
		$U\sdh(\bx^*)=\zero$ & $U\bx^*=\zero$\\
		\hline
	\end{tabular}
\end{table}
\ws{These conditions share the same structure, i.e.,
	\begin{equation}\nonumber
		\begin{array}{c}
			\mapA(\bx^*)=-cU^\T\bq^*,\\
			U\mapB(\bx^*)=\zero,
		\end{array}
	\end{equation}
	where $\mapA:\R^{n\times p}\rightarrow\R^{n\times p}$ and $\mapB:\R^{n\times p}\rightarrow\R^{n\times p}$ are some general maps. The only difference between relations \eqref{eq:opt_cond} and \eqref{eq:opt_cond_consensus} 
	is: what in the span space of $U^\T$ is and what in the null space of $U$  is. 
	In the resource allocation problem, we need $\sdh(\bx)$ to be consensual while the rows of $\bx-\br$ summing to $0$. In the consensus optimization problem, we need $\bx$ to be consensual while the rows of $\sdh(\bx)$ are summing to $\zero$. If in \eqref{eq:opt_cond_consensus}, we replace $\sdh(\bx^*)$ by $\bx^*-\br$ and $\bx^*$ by $\sdh(\bx^*)$ at the same time, it will recover \eqref{eq:opt_cond}. Hypothetically, in an iterative consensus optimization algorithm with iteration index $k$, if we substitute the image of the ``subgradient map'' $\sdh(\bx^k)$ by that of some other map $\mapA(\bx^k)$ and substitute the image of the identity map $\bx^k$ by that of some other map $\mapB(\bx^k)$, there is a chance that when $k\rightarrow\infty$, $\mapA(\bx^k)$ still goes into the span space of $U^\T$ and $\mapB(\bx^k)$ still goes into the null space of $U$.}

\ws{Furthermore, to analyze a consensus convex optimization algorithm, the most crucial relation we need is the monotone inequality, namely,
	$0\leq\langle\sdh(\bx^k)-\sdh(\bx^*),\bx^k-\bx^*\rangle$ for all $k$, which translates into verifying $0\leq\langle\mapA(\bx^k)-\mapA(\bx^*),\mapB(\bx^k)-\mapB(\bx^*)\rangle$ for all $k$, when we substitute the key quantities by the images of those general maps we have discussed above. Apparently, this inequality still holds when we let $\mapA(\bx)=\bx-\br$ and $\mapB(\bx)=\sdh(\bx)$ and assume $\bh(\bx)$ is convex. It is possible that such substitutions of $\mapA$ and $\mapB$ do not affect the validity of some of the existing analyses for certain algorithms. 
	For example, the subgradient form of the proximal method is 
	$\bx^{k+1}=\bx^k-\alpha\sdh(\bx^{k+1})$ where $\alpha$ is a step size. This method can be proven to have $\sdh(\bx^{k+1})\rightarrow\zero$ if $\bh(\bx)$ is convex. Its counterpart after the substitution is $\sdh(\bx^{k+1})=\sdh(\bx^{k})-\alpha(\bx^{k+1}-\br)$, which can be resolved as
	$$\text{initializing with arbitrary }\bx^0\in\R^{n\times p}\quad\text{ and }\quad\bd^0=\sdh(\bx^{0}),
	$$
	and updating according to the following rules:
	$$
	\bx^{k+1}=\argmin_{\bx\in\R^{n\times p}} \left\{\bh(\bx)+\frac{\alpha}{2}\left\|\bx-\br-\frac{\bd^k}{\alpha}\right\|_\Fro^2\right\}
	\qquad\text{ and }\qquad\bd^{k+1}=\bd^{k}-\alpha(\bx^{k+1}-\br).
	$$
	It can be shown that the sequence $\{\bx^{k}\}$ of such an iterative algorithm will converge to $\br$, corresponding to the fact that $\{\sdh(\bx^k)\}$ will converge to $\zero$ in the proximal method.}

These observations motivate the class of algorithms that we propose 
for solving resource allocation problems based on some existing consensus optimization algorithms. 
The consensus optimization algorithms \an{that will be exploited in this paper 
	are simple, efficient, and have recently been further accelerated akin to Nesterov's fast methods \cite{zhao2015fast}, as well as enhanced to work over asynchronous \cite{wu2016decentralized} and directed communication networks \cite{Zeng2015,Xi2015}.}
Our algorithm design philosophy also implies possibilities of enhancing the resource allocation algorithms proposed in this paper by using techniques from those for advancing consensus optimization algorithms.

In Section~\ref{sec:algos}, we describe our resource allocation algorithms and provide 
their convergence analysis. 
Finally, we will illustrate some numerical experiments in Section~\ref{sec:numer}
and conclude the paper with remarks in Section~\ref{sec:concl}. 

\section{The Algorithms and their Convergence Analysis}\label{sec:algos}
Before we introduce our algorithms and conduct the analyses, 
\an{let us introduce 
	the solution set of the resource allocation problem \eqref{eq:basic}, denoted by $\bX^*$.
	We make the following assumption for problem~\eqref{eq:basic}, which we use throughout the paper.}

\an{\begin{assumption}\label{ass:sol}
		The solution set $\bX^*$ of the resource allocation problem~\eqref{eq:basic} is nonempty.
		Furthermore, a Slater condition is satisfied, i.e.,  
		there is a point $\tilde \bx$ which satisfies the linear constraints in~\eqref{eq:basic}
		and lies in the relative interior of the set constraint $\Omg=\Omega_1\times\cdots\times\Omega_n$.
	\end{assumption}
	The set $\bX^*$ is nonempty, for example, when the constraint set of the resource allocation problem~\eqref{eq:basic} is compact, or the objective function satisfies some growth condition. 
	Under the convexity conditions in Assumption~\ref{assum:all}, the optimal set $\bX^*$ is closed and convex.
	Under Assumptions~\ref{assum:all} and~\ref{ass:sol}, the strong duality holds for problem~\eqref{eq:basic} and its Lagrangian dual problem, and the dual optimal set is nonempty (see 
	Proposition 6.4.2 of \cite{Bertsekas2003}).}
%%%%%%%%%%%%%%%%%%%%%%%%%%%%
\subsection{The basic algorithm: Mirror-P-EXTRA}
This algorithm solves the original problem~\eqref{eq:basic}, i.e., the resource allocation with local constraints. 
This basic algorithm operates as follows (Algorithm 1). 
\an{Each agent $i$ uses its local parameter $\beta_i>0$, 
	which can be viewed as the stepsize. 
	%and the matrix $B$ in the algorithm is the diagonal matrix with $\beta_i$ being its $(i,i)$-th entry.
}
\smallskip
\begin{center}
	{\textbf{Algorithm 1: Mirror-P-EXTRA}}
	
	\smallskip
	\begin{tabular}{l}
		\hline
		\emph{  } Each agent $i$ chooses its own parameter 
		\an{$\beta_i>0$ and the same parameter $c>0$;}\\
		% such that $B\succcurlyeq c\Lap \succcurlyeq0$;\\
		\emph{  } Each agent $i$ initializes with $x_i^0\in\Omega_i$, $s_i^0=\sdfi(x_i^0)$, and $y_i^{-1}=0$;\\
		\emph{  } Each agent $i$ \textbf{for} $k=0,1,\ldots$ \textbf{do}\\
		\qquad$y_i^{k}=y_i^{k-1}+\sum_{j\in\Ni\cup\{i\}}\text{\L}_{ij}s_j^{k}$;\\
		%\hbox{}\\
		\qquad
		$x_i^{k+1}=\argmin\limits_{x_i\in\Omega_i} \left\{
		f_i(x_i) - \langle s_i^k,x_i\rangle +\frac{1}{2\beta_i}\|x_i-r_i+2cy_i^k-cy_i^{k-1}\|_2^2\right\}$;\\
		\qquad$s_i^{k+1}=s_i^k-\frac{1}{\beta_i}\left(x_i^{k+1}-r_i+2cy_i^k-cy_i^{k-1}\right)$;\\
		\emph{  } \textbf{end}\\
		\hline
	\end{tabular}
\end{center}
\smallskip

The algorithm is motivated by the P-EXTRA algorithm from reference \cite{Shi2015_2} for 
a consensus optimization problem. The reason we refer to Algorithm 1 as 
\an{Mirror-P-EXTRA will be clear from the following lemma. In the lemma and later on, we will use
	$B$ to denote the diagonal matrix that has $\beta_i$ as its $(i,i)$-th entry,
	\[B=\left[
	\begin{array}{cccc}
	\beta_1 & 0 &\cdots &0\cr
	0& \beta_2 & \cdots & 0\cr
	\vdots&\vdots&\ddots& \vdots\cr
	0 & 0 & \cdots & \beta_n
	\end{array}\right].
	\]}

\begin{lemma}\label{lemma:rec_rel_alg1} 
	\an{Let Assumptions~\ref{assum:all} and~\ref{ass:conn_and_prop} be satisfied, and let $c>0$.}
	Then, the sequence $\{\bx^k,\by^k\}$ generated by Algorithm 1 satisfies for $k=0,1,\ldots$,
	\begin{subequations}\label{eq:updates2}
		\begin{align}
			&\bx^{k+1}-\br+cU^\T\bq^{k+1}+(B-c\Lap)(\sdh(\bx^{k+1})-\sdh(\bx^k))=\zero,\label{eq:up_line1}\\
			&\bq^{k+1}=\bq^k+U\sdh(\bx^{k+1}),\label{eq:up_line2}\\
			&\by^{k}=U^\T\bq^{k}\label{eq:up_line3},
		\end{align}
	\end{subequations}
	where $\bx^0$ is chosen the same as that in Algorithm 1, $\bq^{-1}=\zero$, and $\bq^0=U\sdh(\bx^0)$ where the matrix $\sdh(\bx^0)$ of subgradients is the same as those used in Algorithm 1.
\end{lemma}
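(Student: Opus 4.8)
The plan is to first rewrite Algorithm~1 in matrix form, then identify the auxiliary variables $s_i^k$ with the rows of a subgradient matrix $\sdh(\bx^k)$ for one fixed selection of subgradients, and finally derive \eqref{eq:updates2} by a telescoping argument together with the factorization $\Lap=U^\T U$ guaranteed by Assumption~\ref{ass:conn_and_prop}.

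First I would stack the per-agent iterates into matrices $\bs^k,\by^k\in\R^{n\times p}$ whose $i$-th rows are $(s_i^k)^\T$ and $(y_i^k)^\T$. Since $\Lap$ is compatible with $\G$, the $y$-step of Algorithm~1 becomes $\by^k=\by^{k-1}+\Lap\bs^k$ with $\by^{-1}=\zero$, and the $s$-step becomes $B(\bs^{k+1}-\bs^k)=-(\bx^{k+1}-\br+2c\by^k-c\by^{k-1})$ with $\bs^0=\sdf(\bx^0)$. The key step is to show that, for a suitable choice of subgradients, $\bs^k=\sdh(\bx^k)$ for all $k\ge 0$. For $k=0$, since $x_i^0\in\Omega_i$ we have $\zero\in\partial g_i(x_i^0)$ (the normal cone of $\Omega_i$ at $x_i^0$), so $s_i^0=\sdfi(x_i^0)\in\partial f_i(x_i^0)\subseteq\partial f_i(x_i^0)+\partial g_i(x_i^0)=\partial h_i(x_i^0)$ by the sum rule recalled around~\eqref{eq:subdif}; we declare these rows to be $\sdh(\bx^0)$, which is consistent with the choice $\bq^0=U\sdh(\bx^0)$ in the statement. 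For the inductive step, the $x$-step minimizes over $\R^p$ the proper closed convex function $h_i(x_i)-\langle s_i^k,x_i\rangle+\tfrac{1}{2\beta_i}\|x_i-r_i+2cy_i^k-cy_i^{k-1}\|_2^2$ (the constraint $x_i\in\Omega_i$ being absorbed into $g_i$), and since the quadratic term is everywhere differentiable the subdifferential sum rule applies, so the first-order optimality condition yields some $\widetilde\nabla h_i(x_i^{k+1})\in\partial h_i(x_i^{k+1})$ with
\[
\widetilde\nabla h_i(x_i^{k+1})-s_i^k+\tfrac{1}{\beta_i}\bigl(x_i^{k+1}-r_i+2cy_i^k-cy_i^{k-1}\bigr)=\zero;
\]
comparing with the $s$-step of Algorithm~1 gives $s_i^{k+1}=\widetilde\nabla h_i(x_i^{k+1})$, and collecting these rows into $\sdh(\bx^{k+1})$ gives $\bs^{k+1}=\sdh(\bx^{k+1})$.

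With $\bs^k=\sdh(\bx^k)$ in hand, I would define $\bq^k$ by $\bq^{-1}=\zero$ and $\bq^{k+1}=\bq^k+U\sdh(\bx^{k+1})$, which is exactly \eqref{eq:up_line2} and gives $\bq^0=U\sdh(\bx^0)$. Telescoping the $y$-step and using $\Lap=U^\T U$ yields $\by^k=\sum_{t=0}^{k}\Lap\sdh(\bx^t)=U^\T\sum_{t=0}^{k}U\sdh(\bx^t)=U^\T\bq^k$, which is \eqref{eq:up_line3}. Substituting $\by^k=U^\T\bq^k$ and $\bs^k=\sdh(\bx^k)$ into the matrix $s$-step gives
\[
\bx^{k+1}-\br+2cU^\T\bq^k-cU^\T\bq^{k-1}+B\bigl(\sdh(\bx^{k+1})-\sdh(\bx^k)\bigr)=\zero,
\]
and since $\Lap\bigl(\sdh(\bx^{k+1})-\sdh(\bx^k)\bigr)=U^\T U\sdh(\bx^{k+1})-U^\T U\sdh(\bx^k)=U^\T(\bq^{k+1}-\bq^k)-U^\T(\bq^k-\bq^{k-1})=U^\T(\bq^{k+1}-2\bq^k+\bq^{k-1})$, replacing $2cU^\T\bq^k-cU^\T\bq^{k-1}$ by the equal quantity $cU^\T\bq^{k+1}-c\Lap\bigl(\sdh(\bx^{k+1})-\sdh(\bx^k)\bigr)$ produces precisely \eqref{eq:up_line1}.

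The step I expect to be the main obstacle is the bookkeeping of the subgradient selection: one must ensure that the subgradients appearing in $\sdh(\bx^k)$ in \eqref{eq:up_line1}, those hidden inside $\bq^k$ through \eqref{eq:up_line2}, and the one used to initialize $\bq^0$ are all the \emph{same} selection, namely the one dictated by the optimality conditions of the $x$-subproblems (together with $s_i^0=\sdfi(x_i^0)$ at $k=0$). Once this identification is pinned down, the remaining manipulations are routine telescoping and substitution.
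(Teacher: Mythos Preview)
Your proposal is correct and follows essentially the same approach as the paper: write Algorithm~1 in matrix form, identify $\bs^k=\sdh(\bx^k)$ via the optimality condition of the $x$-subproblem and the initialization, introduce $\bq^k$ through $\Lap=U^\T U$, and rearrange. Your final algebraic step (expressing $2cU^\T\bq^k-cU^\T\bq^{k-1}$ via $\Lap(\sdh(\bx^{k+1})-\sdh(\bx^k))=U^\T(\bq^{k+1}-2\bq^k+\bq^{k-1})$) is a minor reshuffling of the paper's substitution $\bq^k=\bq^{k+1}-U\sdh(\bx^{k+1})$, and your explicit treatment of the subgradient selection is in fact more careful than the paper's one-line remark.
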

\begin{proof}
	%Let $B$ be a diagonal matrix with its $(i,i)$-th diagonal entry being $\beta_i$. 
	By using the notation given in Subsection~\ref{sec:notation}, the updates of Algorithm 1 can be represented compactly as initializing with arbitrary $\bx^0\in\Omg$, $\bs^0=\sdf(\bx^0)$, and $\by^{-1}=\zero$, and then performing for $k=0,1,2,\ldots$,
	\begin{subequations}\label{eq:updates2_proof1}
		\begin{align}
			&\by^k=\by^{k-1}+\Lap\bs^k,\label{eq:up_p1_line1}\\
			&\sdh(\bx^{k+1})-\bs^k+B^{-1}(\bx^{k+1}-\br+2c\by^k-c\by^{k-1})=\zero,\label{eq:up_p1_line2}\\
			&\bs^{k+1}=\bs^{k}-B^{-1}(\bx^{k+1}-\br+2c\by^k-c\by^{k-1}).\label{eq:up_p1_line3}
		\end{align}
	\end{subequations}
	\an{
		From \eqref{eq:up_p1_line2} and \eqref{eq:up_p1_line3}, also considering the initialization\footnote{Since 
			$\bx^0\in\Omg$, we can choose the subgradient $\sdg(\bx^0)=\zero$ and use  
			the relation $\sdf(\bx^0)+\sdg(\bx^0)=\sdh(\bx^0)$ (see~\eqref{eq:subdif}).}
		%The readers are referred to Theorem 16.37 and Corollary 16.38 of reference \cite{Bauschke2011convex} for more detailed discussion on under what conditions such relation holds. Here the readers can simply assume that the domain of $\f$ is $\R^{n\times p}$.} 
		$\bs^0=\sdf(\bx^0)=\sdf(\bx^0)+\sdg(\bx^0)=\sdh(\bx^0)$, we have $\bs^{k}=\sdh(\bx^{k})$ for $k=0,1,\ldots$. Thus, by substituting  $\bs^{k}=\sdh(\bx^{k})$ in~\eqref{eq:up_p1_line1},
		we obtain that 
		the given conditions are equivalent to:}
	\begin{subequations}\label{eq:updates2_proof2_pre}
		\begin{align}
			&\by^k=\by^{k-1}+\Lap\sdh(\bx^{k}),\label{eq:updates2_proof2_pre1}\\
			&B(\sdh(\bx^{k+1})-\sdh(\bx^{k}))+\bx^{k+1}-\br+2c\by^k-c\by^{k-1}=\zero.
			\label{eq:updates2_proof2_pre2}
		\end{align}
	\end{subequations}
	\an{Now, in relation~\eqref{eq:updates2_proof2_pre2}, we write 
		$2c\by^k-c\by^{k-1}=c\by^k +c(\by^k-\by^{k-1})$ and use $\by^k-\by^{k-1}=\Lap\sdh(\bx^{k})$
		(cf.\ \eqref{eq:updates2_proof2_pre1}), }
	to obtain the following equivalent relations: 
	\begin{subequations}\label{eq:updates2_proof2}
		\begin{align}
			&\by^k=\by^{k-1}+\Lap\sdh(\bx^{k}),\label{eq:up_p2_line1}\\
			&B(\sdh(\bx^{k+1})-\sdh(\bx^{k}))+\bx^{k+1}-\br+c\by^k+c\Lap\sdh(\bx^k)=\zero.\label{eq:up_p2_line2}
		\end{align}
	\end{subequations}
	\an{These relations are enough to generate the sequence $\{\bx^k,\by^k\}$.
		By Assumption~\ref{ass:conn_and_prop}, we have that $\Lap=U^\T U$, so by introducing the notation
		$\by^k=U^\T\bq^{k}$, relation \eqref{eq:up_p2_line1} reduces to
		$\bq^{k+1}=\bq^{k}+U\sdh(\bx^{k+1})$. We note that
		a sequence $\{\by^k\}$ generated by $\by^k=U^\T\bq^{k}$ is the same to the sequence $\{\by^k\}$ generated by \eqref{eq:up_p2_line1} with $\by^{-1}=\zero$.
		Using  these relations and reorganizing \eqref{eq:updates2_proof2}, we obtain}
	\begin{subequations}\label{eq:updates2_proof3}
		\begin{align}
			&\bx^{k+1}-\br+cU^\T\bq^k+c\Lap\sdh(\bx^k)+B(\sdh(\bx^{k+1})-\sdh(\bx^{k}))=\zero,\label{eq:up_p3_line1}\\
			&\bq^{k+1}=\bq^{k}+U\sdh(\bx^{k+1}),\label{eq:up_p3_line2}\\
			&\by^k=U^\T\bq^{k}\label{eq:up_p3_line3},
		\end{align}
	\end{subequations}
	which generates the same $\{\bx^k,\by^k\}$ sequence as Algorithm 1 does. 
	\an{Finally, relation \eqref{eq:up_p3_line2} is equivalent to $\bq^{k}=\bq^{k+1}-U\sdh(\bx^{k+1})$,
		which when substituted into \eqref{eq:up_p3_line1} gives relation~\eqref{eq:up_line1}.
		Relations \eqref{eq:up_p3_line2} and \eqref{eq:up_p3_line3} coincide with
		\eqref{eq:up_line2} and \eqref{eq:up_line3}, respectively.}
\end{proof}

If we choose the special case $W=2\tilde{W}-I$ and $\alpha=1/c$ in Corollary 1 
of reference \cite{Shi2015_2}, we will have the recursive relation of P-EXTRA in the following form\footnote{For brevity, we have slightly abused the notation $U$. In P-EXTRA, $U$ is the square root of $I-W$, while in the proposed algorithms here, 
	$U$ is an arbitrary matrix that satisfies $\Lap=U^\T U$. 
	However, in both cases, the null spaces of $U$ are the same, and so are the span spaces of $U^\T$.}:
\begin{equation}\label{eq:PEXTRA}
	\sdh(\bx^{k+1})+cU^\T\bq^{k+1}+c\BW(\bx^{k+1}-\bx^k)=\zero
	\quad\text{ and }\quad\bq^{k+1}=\bq^k+U\bx^{k+1}.
\end{equation}

To fulfill the optimal condition for the resource allocation problem, 
we would need $\sdh(\bx^{k+1})$ to be consensual and the rows of $\bx^{k+1}-\br$ to sum up to $0$. 
In view of the insight from Lemmas~\ref{lem:opc-res} and~\ref{lem:opc-con}, the only thing we need to do is 
to replace $\sdh(\bx^{k+1})$ by $\bx^{k+1}-\br$ and replace $\bx^{k+1}$ by $\sdh(\bx^{k+1})$, as 
\an{discussed in Section~\ref{sec:mirror}}. By doing so, we obtain
\begin{equation}\label{eq:try1}
	\bx^{k+1}-\br+cU^\T\bq^{k+1}+c\BW(\sdh(\bx^{k+1})-\sdh(\bx^k))=\zero
	\quad\text{ and }\quad\bq^{k+1}=\bq^k+U\sdh(\bx^{k+1}),
\end{equation}
which is very similar to the relations \eqref{eq:up_line1} and \eqref{eq:up_line2} in Lemma \ref{lemma:rec_rel_alg1}. \ws{The only difference is in the term $c\BW(\sdh(\bx^{k+1})-\sdh(\bx^k))$ of \eqref{eq:try1} whereas we have ``replaced'' this term by $(B-c\Lap)(\sdh(\bx^{k+1})-\sdh(\bx^k))$ to obtain \eqref{eq:up_line1}. The key function of the term $c\BW(\bx^{k+1}-\bx^k)$ in \eqref{eq:PEXTRA} (corresponding to the term $c\BW(\sdh(\bx^{k+1})-\sdh(\bx^k))$ in~\eqref{eq:try1}) 
	is to stabilize the iterative process and neutralize those terms that are not one-step decentralized implementable in P-EXTRA. Hypothetically, with a ``large enough'' positive (semi)definite matrix $P$, any term $P(\bx^{k+1}-\bx^k)$ in \eqref{eq:PEXTRA} (or $P(\sdh(\bx^{k+1})-\sdh(\bx^k))$ in~\eqref{eq:try1}) will serve the purpose of stabilizing the iterative process. Here, we redesign this term as a more flexible one, $(B-c\Lap)\sdh(\bx^{k+1})$, so that the recursive relations are resolvable and implementable in decentralized manner, while featuring per-agent-independent parameters.}

Our analysis of Algorithm~1 will use the alternative description of the algorithm, as given in Lemma~\ref{lemma:rec_rel_alg1}. 
	To simplify our presentation, let us define the following quantities:
\begin{equation}\label{eq:met_z}
	\met\triangleq\left(
	\begin{array}{cc}
		cI&0\\
		0&B-c\Lap
	\end{array}
	\right),\quad
	\bz^k\triangleq\left(
	\begin{array}{c}
		\bq^k\\
		\sdh(\bx^k)
	\end{array}
	\right),\quad \text{and}\quad
	\bz^*=\bz(\bx^*)\triangleq\left(
	\begin{array}{c}
		\bq^*\\
		\sdh(\bx^*)
	\end{array}
	\right)\ \ \hbox{with }\bx^*\in \bX^*,
\end{equation}
\an{where $c>0$ is the parameter of Algorithm~1.
	Using this particular $c$ in Lemma~\ref{lem:opc-res}, with each solution $\bx^*\in \bX^*$ we can identify $\bq^*$ such that  Lemma~\ref{lem:opc-res} holds, i.e.,
	the optimality conditions in~\eqref{eq:opt_cond} are satisfied. 
	This particular $\bq^*$ and the matrix $\sdh(\bx^*)$ constitute the matrix $\bz^*$.
}
%Note that there could be multiple valid choice of $\bq^*$. Let us choose the $\bq^*$ whose columns are in the %span space of $U^\T$.

Next, we will show that $\bx^k$ converges to a solution $\bx^*$. 

	\begin{theorem}[Convergence of Mirror-P-EXTRA]\label{theorem:conv}
		Let Assumptions \ref{assum:all}--\ref{ass:sol} hold.  Let the parameters $\beta_i$ and $c>0$ be such that {\color{black} $B-c\Lap\succ\zero$}.
		Then, ${\color{black} \met\succ0,}$ and the sequences $\{\bx^k\}$ and $\{\bq^k\}$ generated by Algorithm~1 
		satisfy the following relations:
		\begin{equation}\label{eq:contraction}
			\|\bz^k-\bz^{k+1}\|_\met^2\leq\|\bz^k-\bz^*\|_\met^2-\|\bz^{k+1}-\bz^*\|_\met^2, \ \ \forall k=0,1,\ldots,
		\end{equation}
		where $\met$, $\bz^k$ and $\bz^*=\bz(\bx^*)$, for $\bx^*\in \bX^*$, are defined by~\eqref{eq:met_z}.
		Furthermore, the sequence $\{\bx^k\}$ converges to a point in the optimal set $\bX^*$.
	\end{theorem}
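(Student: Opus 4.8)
The plan is to read everything off the equivalent description of Algorithm~1 given in Lemma~\ref{lemma:rec_rel_alg1} (relations~\eqref{eq:updates2}) together with the optimality characterization of Lemma~\ref{lem:opc-res}. First, $\met\succcurlyeq 0$ is immediate, since $\met$ is block diagonal with blocks $cI\succ 0$ and $B-c\Lap\succcurlyeq 0$. For the contraction~\eqref{eq:contraction}, I would subtract the optimality relation~\eqref{eq:oc_line1} (taken with the same $c>0$ used by the algorithm, so that a matching $\bq^*$ exists by Lemma~\ref{lem:opc-res}) from~\eqref{eq:up_line1}, obtaining $(\bx^{k+1}-\bx^*)+cU^\T(\bq^{k+1}-\bq^*)+(B-c\Lap)(\sdh(\bx^{k+1})-\sdh(\bx^*))=\zero$, and then take the trace inner product of this identity with $\sdh(\bx^{k+1})-\sdh(\bx^*)$. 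The term $\langle\bx^{k+1}-\bx^*,\sdh(\bx^{k+1})-\sdh(\bx^*)\rangle$ is nonnegative by monotonicity of $\partial\bh$ (convexity of $\bh$); the coupling term collapses because $U\sdh(\bx^*)=\zero$ by~\eqref{eq:oc_line2} and $U\sdh(\bx^{k+1})=\bq^{k+1}-\bq^k$ by~\eqref{eq:up_line2}, so it becomes $c\langle\bq^{k+1}-\bq^*,\bq^{k+1}-\bq^k\rangle$. Applying the polarization identity $2\langle a-b,a-c\rangle=\|a-b\|^2+\|a-c\|^2-\|b-c\|^2$ in the Euclidean inner product for the $\bq$-part and in the $(B-c\Lap)$-weighted inner product for the $\sdh$-part, and regrouping via the definitions of $\met$ and $\bz^k$ in~\eqref{eq:met_z}, yields~\eqref{eq:contraction} directly. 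This step is routine once Lemma~\ref{lemma:rec_rel_alg1} is in hand.

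Next I would extract the standard consequences of this Fej\'er-type inequality: $\{\|\bz^k-\bz^*\|_\met\}$ is nonincreasing, hence convergent and bounded, and telescoping~\eqref{eq:contraction} gives $\sum_k\|\bz^k-\bz^{k+1}\|_\met^2<\infty$, so $\|\bz^{k+1}-\bz^k\|_\met\to 0$. Because the $\bq$-block of $\met$ is $cI$ with $c>0$, this yields $\bq^{k+1}-\bq^k\to\zero$; the $\sdh$-block yields $\|\sdh(\bx^{k+1})-\sdh(\bx^k)\|_{B-c\Lap}\to 0$, hence $(B-c\Lap)(\sdh(\bx^{k+1})-\sdh(\bx^k))\to\zero$. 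Boundedness of $\|\bz^k-\bz^*\|_\met$ makes $\{\bq^k\}$ bounded, and rewriting~\eqref{eq:up_line1} as $\bx^{k+1}=\br-cU^\T\bq^{k+1}-(B-c\Lap)(\sdh(\bx^{k+1})-\sdh(\bx^k))$ then shows that $\{\bx^k\}$ is bounded with $\bx^{k+1}-(\br-cU^\T\bq^{k+1})\to\zero$.

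To identify the limit, let $\bx^\infty$ be any cluster point of $\{\bx^k\}$, say $\bx^{k_j}\to\bx^\infty$; passing to the limit in~\eqref{eq:up_line1} and using injectivity of $U^\T$ (full column rank) gives $\bq^{k_j}\to\bq^\infty$ with $\bx^\infty-\br+cU^\T\bq^\infty=\zero$, and left-multiplying by $\one^\T$ (using $U\one=\zero$, hence $\Lap\one=\zero$) gives $\one^\T(\bx^\infty-\br)=\zero$, so $\bx^\infty$ is feasible, and $\bx^\infty\in\Omg$ as a limit of points in $\Omg$. From the computation behind~\eqref{eq:contraction} one also has $\langle\bx^{k}-\bx^*,\sdh(\bx^{k})-\sdh(\bx^*)\rangle=-c\langle\bq^{k}-\bq^*,\bq^{k}-\bq^{k-1}\rangle-\langle(B-c\Lap)(\sdh(\bx^{k})-\sdh(\bx^{k-1})),\sdh(\bx^{k})-\sdh(\bx^*)\rangle$, and since $\|\sdh(\bx^{k})-\sdh(\bx^*)\|_{B-c\Lap}\le\|\bz^{k}-\bz^*\|_\met$ stays bounded while the other factors vanish, the left-hand side tends to $0$. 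Combining this with the subgradient inequality $\langle\bx^{k}-\bx^*,\sdh(\bx^{k})\rangle\ge\bh(\bx^{k})-\bh(\bx^*)$ and with the fact that $\sdh(\bx^*)$ is consensual by~\eqref{eq:oc_line2} (so that $\langle\bx^{k_j}-\bx^*,\sdh(\bx^*)\rangle\to 0$, because $\one^\T(\bx^{k_j}-\bx^*)\to\zero$ by feasibility of $\bx^\infty$), I obtain $\limsup_j\bh(\bx^{k_j})\le\bh(\bx^*)$; lower semicontinuity of $\bh$ then forces $\bh(\bx^\infty)\le\bh(\bx^*)$, and since $\bx^\infty$ is feasible this means $\bx^\infty\in\bX^*$. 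To upgrade ``every cluster point is optimal'' to convergence of the whole sequence, I would note that for any two cluster points $\bx^\infty,\bx'$ one can, along suitable further subsequences, make $\sdh(\bx^{k_j})$ converge to a consensual element of $\partial\bh(\bx^\infty)$ (resp. of $\partial\bh(\bx')$): indeed $(B-c\Lap)^{1/2}\sdh(\bx^k)$ is bounded and $(B-c\Lap)\one=B\one\ne\zero$, which keeps the consensual component of $\sdh(\bx^{k_j})$ bounded, while the non-consensual component vanishes since $U\sdh(\bx^{k})=\bq^{k}-\bq^{k-1}\to\zero$; outer semicontinuity of $\partial\bh$ places the limit in $\partial\bh(\bx^\infty)$, with $U$ of it equal to $\zero$. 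The resulting $\bz^\infty,\bz'$ are then legitimate reference points for~\eqref{eq:contraction}, so $\{\|\bz^k-\bz^\infty\|_\met\}$ and $\{\|\bz^k-\bz'\|_\met\}$ both converge, and passing to the limit along the two subsequences forces $\|\bz^\infty-\bz'\|_\met=0$, whence $\bq^\infty=\bq'$ and $\bx^\infty=\br-cU^\T\bq^\infty=\bx'$. So $\{\bx^k\}$ has a unique cluster point in $\bX^*$ and converges to it.

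I expect the genuine obstacle to be this last part: passing from Fej\'er monotonicity to actual convergence of $\bx^k$ when $B-c\Lap$ is singular, so that $\met$ is only positive semidefinite and~\eqref{eq:contraction} controls $\bz^k$ only modulo the null space of $B-c\Lap$. One must then work to rule out the subgradient iterates $\sdh(\bx^k)$ drifting to infinity in the consensual direction (this is where $(B-c\Lap)\one=B\one\ne\zero$ is essential), and to land the limiting subgradient inside $\partial\bh(\bx^\infty)$, which needs outer semicontinuity of the set-valued subdifferential (set-valued precisely because of the indicator terms $g_i$) — a subtlety that does not arise in the original P-EXTRA analysis, where the corresponding quantity is the primal iterate itself. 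If one is content to impose $B-c\Lap\succ 0$, then $\met$ is a genuine norm and this last step reduces to a one-line Opial-type argument.
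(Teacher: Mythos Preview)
Your derivation of the contraction~\eqref{eq:contraction} matches the paper's, modulo one slip: the relation obtained by subtracting~\eqref{eq:oc_line1} from~\eqref{eq:up_line1} has $(B-c\Lap)(\sdh(\bx^{k+1})-\sdh(\bx^{k}))$ in the last term, not $\sdh(\bx^*)$ --- the optimality condition carries no $(B-c\Lap)$ block. Your later identity for $\langle\bx^k-\bx^*,\sdh(\bx^k)-\sdh(\bx^*)\rangle$ has this right, so it is evidently a typo rather than a misunderstanding.

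Where you diverge from the paper is in the limit identification. The paper does not use a function-value argument: it bounds $\{\sdh(\bx^k)\}$ directly by citing local boundedness of subdifferentials over bounded sets, extracts a subsequential limit $\bd\in\partial\bh(\tilde\bx)$ with $U\bd=\zero$, reads off $\tilde\bx\in\bX^*$ from Lemma~\ref{lem:opc-res}, and then uses $\tilde\bz^*=(\tilde\bq;\bd)$ as a single Fej\'er reference to get $\|\bz^k-\tilde\bz^*\|_\met\to 0$, hence $\bq^k\to\tilde\bq$ and $\bx^k\to\tilde\bx$. You instead first prove optimality of cluster points via function values ($\langle\bx^k-\bx^*,\sdh(\bx^k)-\sdh(\bx^*)\rangle\to 0$ plus the subgradient inequality and consensuality of $\sdh(\bx^*)$), and only afterwards extract a subgradient limit --- via your consensual/non-consensual decomposition --- to run a two-reference-point Fej\'er uniqueness argument. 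Your function-value step is correct but redundant: once the decomposition yields a bounded $\{\sdh(\bx^k)\}$ and a subsequential limit in $\partial\bh(\bx^\infty)$ annihilated by $U$, Lemma~\ref{lem:opc-res} already gives $\bx^\infty\in\bX^*$ with no appeal to $\bh$-values. What your route does buy is that the decomposition argument for subgradient boundedness (non-consensual part controlled by $U\sdh(\bx^k)\to\zero$, consensual part by $(B-c\Lap)\one=B\one\neq\zero$) is fully self-contained and robust to the semidefinite-$\met$ case you flagged, whereas the local-boundedness result the paper cites is ordinarily stated for real-valued convex functions and $h_i=f_i+g_i$ is extended-real-valued.
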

\begin{proof}
{\color{black} The fact that $\met\succ0$ follows directly 
from the assumptions of the theorem on the choice of the parameters $\beta_i$ and $c>0$.}
		By the convexity of $\bh$, we have that for any arbitrary $\bx^*\in\bX^*$,
		\begin{equation}\label{eq:conv_p1}
			\begin{array}{rcl}
				0&\leq&\langle\sdh(\bx^{k+1})-\sdh(\bx^*),\bx^{k+1}-\bx^*\rangle.
			\end{array}
		\end{equation}	
		By relation~\eqref{eq:up_line1} of
		Lemma~\ref{lemma:rec_rel_alg1} we have
		\[\bx^{k+1}=\br-cU^\T\bq^{k+1}-(B-c\Lap)(\sdh(\bx^{k+1})-\sdh(\bx^k)).\]
		By Lemma~\ref{lem:opc-res}, where $c>0$ is the chosen parameter in the algorithm,
		from \eqref{eq:oc_line1} we have
		\[\bx^*=\br-cU^\T\bq^*.\]
		The preceding two relations imply that
		\[\bx^{k+1}-\bx^*=cU^\T\left( \bq^*-\bq^{k+1}\right)-(B-c\Lap)(\sdh(\bx^{k+1})-\sdh(\bx^k)),\]
		which when substituted into \eqref{eq:conv_p1} yields
		\begin{eqnarray*}
			0&\leq&\langle\sdh(\bx^{k+1})-\sdh(\bx^*),cU^\T(\bq^*-\bq^{k+1})+(B-c\Lap)(\sdh(\bx^k)-\sdh(\bx^{k+1}))\rangle\cr
			&=& 
			\langle U(\sdh(\bx^{k+1})- \sdh(\bx^*)),c(\bq^*-\bq^{k+1})\rangle+
			\langle\sdh(\bx^{k+1})-\sdh(\bx^*),(B-c\Lap)(\sdh(\bx^k)-\sdh(\bx^{k+1}))\rangle.
		\end{eqnarray*}
		Using relation~\eqref{eq:up_line2} of
		Lemma~\ref{lemma:rec_rel_alg1} and $U\sdh(\bx^*)=\zero$ 
		(see Lemma~\ref{lem:opc-res}), it follows that
		\[0\le \langle\bq^{k+1}-\bq^k,c(\bq^*-\bq^{k+1})\rangle+\langle\sdh(\bx^{k+1})-\sdh(\bx^*),(B-c\Lap)(\sdh(\bx^k)-\sdh(\bx^{k+1}))\rangle.
		\]
		Recalling the definitions of $\met$, $\bz^k$, and $\bz^*$ in \eqref{eq:met_z}, by applying the basic equality $2\langle\bz^{k+1}-\bz^k,\met(\bz^*-\bz^{k+1})\rangle=\|\bz^k-\bz^*\|_\met^2-\|\bz^{k+1}-\bz^*\|_\met^2-\|\bz^k-\bz^{k+1}\|_\met^2$,  from the preceding inequality we obtain
		\begin{equation}\label{eq:conv_p3}
			\begin{array}{rcl}
				\|\bz^k-\bz^{k+1}\|_\met^2 + \|\bz^{k+1}-\bz^*\|_\met^2\leq\|\bz^k-\bz^*\|_\met^2.
			\end{array}
		\end{equation}
	
	By summing relations in~\eqref{eq:conv_p3} over $k$ from $k=0$ to $k=t$ for some $t>0$ and, 
		then, letting $t\to\infty$, we find that 
		$\sum_{k=0}^\infty \|\bz^k-\bz^{k+1}\|_\met^2 <\infty$, which implies that
		$\lim_{k\to\infty}\|\bz^k-\bz^{k+1}\|_\met^2=0$. Consequently, by the definition of $\bz^k$ and $ \met$, 
		we have that $\lim_{k\to\infty}\|\bq^k-\bq^{k+1}\|_\Fro^2=0$ and 
		$\lim_{k\to\infty}\|\sdh(\bx^k)-\sdh(\bx^{k+1})\|_{B-c\Lap}^2=0$.
Using relations~ \eqref{eq:up_line1} and \eqref{eq:up_line2} from Lemma \ref{lemma:rec_rel_alg1}, 
		we obtain $\|\bq^k-\bq^{k+1}\|_\Fro^2=\|U\sdh(\bx^{k+1})\|_\Fro^2$ and 
		$\|\bx^{k+1}-\br+cU^\T\bq^{k+1}\|_\Fro^2\leq\laml{B-c\Lap}\|\sdh(\bx^k)-\sdh(\bx^{k+1})\|_{B-c\Lap}^2$, thus yielding
		\begin{subequations}\label{eq:conv_p4}
			\begin{align}
				&\lim_{k\rightarrow\infty}U\sdh(\bx^k)=\zero,\label{eq:conv_p4_line1}\\
				&\lim_{k\rightarrow\infty}\{\bx^k-\br+cU^\T\bq^k\}=\zero.\label{eq:conv_p4_line2}
			\end{align}
		\end{subequations}
		From \eqref{eq:conv_p3} we have that 
		\begin{equation}\label{eq:conv_p5} \|\bz^{k+1}-\bz^*\|_\met^2\leq\|\bz^k-\bz^*\|_\met^2,\quad\forall k\ge0,
		\end{equation}
		implying that
	{\color{black} the sequences $\{\bq^k\}$ and $\{\sdh(\bx^k)\}$ are bounded (recall the definition of $\bz^k$ and $\met$ in~\eqref{eq:met_z} and the fact that $\met\succ0$).} 
		In view of~\eqref{eq:conv_p4_line2} it follows that $\{\bx^k\}$ is also bounded and, moreover, that 
		the sequences $\{\bq^k\}$ and $\{\bx^k\}$ have the same convergent subsequences.
		In particular, let $\{\bx^{k_\ell}\}$ and $\{\bq^{k_\ell}\}$ be convergent subsequences, and 
		{\color{black}  
		let $\tilde\bx$ and $\tilde\bq$ be their limit points, respectively. 
		Since $\{\sdh(\bx^k)\}$ is bounded, without loss of generality we may assume that 
		$\{\sdh(\bx^{k_\ell})\}$ converges to some matrix $\bd$
		(for otherwise, we would select a convergent subsequence of $\{\sdh(\bx^{k_\ell})\})$.			
		Then, by letting $\ell\to\infty$, from relations~\eqref{eq:conv_p4_line1} and \eqref{eq:conv_p4_line2}, respectively, we have
		\begin{equation}\label{eq:fin}
			U\bd=\zero,
	\end{equation}
		\begin{equation}\label{eq:almost} 
			\tilde \bx - \br+cU^\T\tilde \bq=\zero.\end{equation}
			Recalling that  $\{\sdh(\bx)\}$ is the matrix with the $i$th row consisting of a subgradient 
		$\widetilde \nabla h_i(x_i)$ of $h_i$ at $x_i$, 
		we have for all $i$,
		\[h_i(\tilde x_i^{k_\ell}) + \langle \tilde \nabla h_i(x_i^{k_\ell}),x_i^{k_\ell}-x_i\rangle
		\le h_i(x_i)\qquad\hbox{for all }x_i.\]
		%\[\bh(\bx^{k_\ell}) + \langle \sdh(\bx^{k_\ell}),\bx^{k_\ell}-\bx\rangle\le \bh(\bx)\qquad\hbox{for all }\bx.\]
		By letting $\ell\to\infty$, and using $\bx^{k_\ell}\to \tilde \bx$ and $\sdh(\bx^{k_\ell})\to \bd$, we obtain
		for all $i$,
		\[h_i(\tilde \bx_i) + \langle d_i,\tilde \bx_i - \bx_i\rangle\le h_i(\bx_i)\qquad\hbox{for all }\bx_i,\]
		showing that, for every $i$, the vector $d_i$ is a subgradient of $h_i$ at $\tilde x_i$, i.e., $d=\sdh(\tilde\bx)$.
		Thus, relations \eqref{eq:fin} and~\eqref{eq:almost} show that $(\tilde \bx,\tilde \bq)$ satisfies the optimality conditions of Lemma~\ref{lem:opc-res}.}
		Therefore, by Lemma~\ref{lem:opc-res}, it follows that $\tilde \bx$ is an optimal solution to the resource allocation problem, i.e., $\tilde \bx\in \bX^*$.
	
	\an{We now define 
		\[\tilde \bz^*\triangleq\left(
		\begin{array}{c}
		\tilde \bq\\
		\sdh(\tilde\bx)
		\end{array}
		\right),\]
		and use $\bz^*=\tilde \bz^*$ in relation~\eqref{eq:conv_p5}, together with the fact that 
		$\bz^{k_\ell}\to
		\tilde \bz^*$, to conclude that the entire sequence $\{\bz^{k}\}$ must converge to $\tilde \bz^*$. By the 
		definition of $\bz^k$, it follows that
		$\bq^k\to\tilde \bq$, which when used in equation~\eqref{eq:conv_p4_line2} implies that
		\[\lim_{k\rightarrow\infty}\bx^k-\br+cU^\T\tilde\bq=\zero.\]
		The preceding relation and~\eqref{eq:almost} yield $\bx^k\to\tilde \bx$.
	}
\end{proof}

To establish the rate of convergence, we use a convergence property of nonnegative monotonic scalar sequence,
which has appeared in recent work~\cite{Deng2016,Shi2015}. The result is stated in the following:

\begin{proposition}\label{prop:o_1_k}
	If a sequence $\{a_k\}\subset\R$ is such that
	$a_k\geq0,$ $\sum_{t=1}^{\infty}a_t<\infty$, and $a_{k+1}\leq a_k$ for all $k$,
	then $a_k=o\left(\frac{1}{k}\right).$
\end{proposition}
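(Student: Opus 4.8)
The plan is to show that the partial sums $S_k = \sum_{t=1}^{k} a_t$ are bounded (by hypothesis they converge, so this is immediate) and then to exploit the monotonicity $a_{k+1} \le a_k$ to lower-bound a tail of the series by a multiple of $k\,a_k$. Concretely, for any index $k$, consider the block of terms $a_{m+1}, a_{m+2}, \ldots, a_{2m}$ where $m = \lfloor k/2 \rfloor$; since the sequence is nonincreasing, each of these $m$ terms is at least $a_{2m}$, hence
\begin{equation*}
\sum_{t=m+1}^{2m} a_t \;\ge\; m\, a_{2m}.
\end{equation*}
Since $\sum_{t=1}^\infty a_t < \infty$, the left-hand side is the difference $S_{2m} - S_m$ of two partial sums both converging to the same finite limit, so it tends to $0$ as $m \to \infty$. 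Therefore $m\, a_{2m} \to 0$, i.e. $(2m)\, a_{2m} \to 0$ along the even indices.

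The second step is to upgrade this ``even-index'' statement to the full sequence using monotonicity once more. For an odd index $2m+1$ we have $a_{2m+1} \le a_{2m}$, so $(2m+1) a_{2m+1} \le (2m+1) a_{2m} = \frac{2m+1}{2m}\,(2m)\,a_{2m} \to 0$. Combining the even and odd cases, $k\, a_k \to 0$, which is exactly the assertion $a_k = o(1/k)$. I would present this cleanly by noting that for every $k$, with $m=\lfloor k/2\rfloor$, one has $k\,a_k \le 2\,(m+1)\,a_{2m}$ or a similar uniform bound, so a single limiting argument suffices; but the two-case split above is equally painless.

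The only thing requiring a little care is the edge behavior for small $k$ (e.g. $k=1$, where $m=0$ and the block is empty): this is irrelevant since $o(1/k)$ is an asymptotic statement, so I would simply take $k$ large enough that $m \ge 1$. There is no genuine obstacle here — the proof is a standard Cauchy-condensation-style estimate — so the ``hard part'' is merely choosing the block $[m+1, 2m]$ (rather than, say, $[1,k]$, which only gives boundedness of $k\,a_k$, not decay to zero) and remembering that the tail $S_{2m}-S_m$ of a convergent series vanishes. I would write the argument in three or four lines: state $m=\lfloor k/2\rfloor$, write the block inequality, invoke convergence of the series to send the block sum to $0$, and conclude via monotonicity on the (at most) one leftover index.
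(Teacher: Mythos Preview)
Your argument is correct: the block estimate $\sum_{t=m+1}^{2m} a_t \ge m\,a_{2m}$ together with the vanishing of tail sums gives $(2m)a_{2m}\to 0$, and monotonicity handles the odd indices. This is the standard proof.

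Note, however, that the paper does not actually prove this proposition. It is stated as a known fact with a citation to prior work (references~\cite{Deng2016,Shi2015} in the paper's bibliography) and used as a black box in the subsequent rate results. So there is no ``paper's own proof'' to compare against; your write-up simply supplies the (routine) missing argument.
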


\begin{lemma}[Monotonic successive difference of Mirror-P-EXTRA]\label{lemma:mono}
	Under Assumptions~\ref{assum:all} and~\ref{ass:conn_and_prop}, we have
	\begin{equation}\label{eq:monotonicity}
		\|\bz^{k+1}-\bz^{k+2}\|_\met^2\leq\|\bz^k-\bz^{k+1}\|_\met^2,\ \forall k=0,1,\ldots.
	\end{equation}
\end{lemma}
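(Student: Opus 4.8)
The plan is to run exactly the argument used in the proof of Theorem~\ref{theorem:conv}, but with the convexity (monotone) inequality applied between two \emph{consecutive} iterates rather than between an iterate and an optimal point. Since each matrix $\sdh(\bx^k)$ produced by the algorithm consists of genuine subgradients of the $h_i$ (Assumption~\ref{assum:all}), convexity of $\bh$ gives, for every $k\ge0$,
\[ 0\le\big\langle\sdh(\bx^{k+2})-\sdh(\bx^{k+1}),\ \bx^{k+2}-\bx^{k+1}\big\rangle. \]
The first step is to express $\bx^{k+2}-\bx^{k+1}$ through the $\bz$-increments. Writing relation~\eqref{eq:up_line1} of Lemma~\ref{lemma:rec_rel_alg1} at index $k$ and at index $k+1$ and subtracting yields
\[ \bx^{k+2}-\bx^{k+1}=-cU^\T(\bq^{k+2}-\bq^{k+1})-(B-c\Lap)\big[(\sdh(\bx^{k+2})-\sdh(\bx^{k+1}))-(\sdh(\bx^{k+1})-\sdh(\bx^k))\big]. \]

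Next I would substitute this into the monotone inequality. The $U^\T$-term becomes $\big\langle U(\sdh(\bx^{k+2})-\sdh(\bx^{k+1})),\,-c(\bq^{k+2}-\bq^{k+1})\big\rangle$, and using relation~\eqref{eq:up_line2} at indices $k+1$ and $k+2$ (so that $\bq^{k+1}-\bq^k=U\sdh(\bx^{k+1})$ and $\bq^{k+2}-\bq^{k+1}=U\sdh(\bx^{k+2})$) it rewrites as $-c\big\langle(\bq^{k+2}-\bq^{k+1})-(\bq^{k+1}-\bq^k),\,\bq^{k+2}-\bq^{k+1}\big\rangle$. Using symmetry of $B-c\Lap$ on the remaining term and recalling the block structure of $\met$ and $\bz^k$ from~\eqref{eq:met_z}, the whole inequality collapses to
\[ \big\langle(\bz^{k+2}-\bz^{k+1})-(\bz^{k+1}-\bz^k),\ \met(\bz^{k+2}-\bz^{k+1})\big\rangle\le0. \]

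To finish, set $\bu=\bz^{k+2}-\bz^{k+1}$ and $\bv=\bz^{k+1}-\bz^k$, so the last display reads $\|\bu\|_\met^2\le\langle\bv,\met\bu\rangle$. Since $\met\succcurlyeq0$ (as established in Theorem~\ref{theorem:conv} under $B-c\Lap\succcurlyeq0$, which this lemma inherits), the Cauchy--Schwarz inequality for the $\met$-seminorm gives $\langle\bv,\met\bu\rangle\le\|\bv\|_\met\|\bu\|_\met$, whence $\|\bu\|_\met\le\|\bv\|_\met$; squaring and using symmetry of the seminorm gives $\|\bz^{k+1}-\bz^{k+2}\|_\met^2\le\|\bz^k-\bz^{k+1}\|_\met^2$, i.e.\ \eqref{eq:monotonicity}. (Alternatively, Young's inequality $\langle\bv,\met\bu\rangle\le\tfrac12\|\bv\|_\met^2+\tfrac12\|\bu\|_\met^2$ yields the same conclusion.)

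I do not expect a genuine obstacle here: the proof is purely algebraic once the substitution is in place. The only points needing care are bookkeeping --- relation~\eqref{eq:up_line1} must be invoked at two successive indices, and each increment $\bq^{k+1}-\bq^k$ must be paired with $U\sdh(\bx^{k+1})$ (not $U\sdh(\bx^k)$) via~\eqref{eq:up_line2} --- and the fact that the final Cauchy--Schwarz/Young step genuinely requires $\met\succcurlyeq0$, so the step-size condition $B-c\Lap\succcurlyeq0$ is implicitly in force throughout this subsection.
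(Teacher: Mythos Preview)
Your proof is correct and follows essentially the same route as the paper's: both take the difference of \eqref{eq:up_line1} at two consecutive indices, substitute into the monotone inequality between consecutive iterates, and use \eqref{eq:up_line2} to reduce everything to the single inequality $\langle\met(\bz^{k+2}-\bz^{k+1}),(\bz^{k+2}-\bz^{k+1})-(\bz^{k+1}-\bz^{k})\rangle\le 0$. The only cosmetic difference is the last step: the paper applies the polarization identity $2\langle\met\bu,\bv-\bu\rangle=\|\bv\|_\met^2-\|\bu\|_\met^2-\|\bv-\bu\|_\met^2$ (yielding the slightly sharper $\|\bv\|_\met^2-\|\bu\|_\met^2\ge\|\bv-\bu\|_\met^2\ge0$ before dropping the extra term), whereas you finish with Cauchy--Schwarz; both require $\met\succcurlyeq0$, and your remark that the condition $B-c\Lap\succcurlyeq0$ is implicitly assumed is well taken.
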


\begin{proof}
	To simplify the notation, let us define
	$\D\bx^{k+1}\triangleq\bx^k-\bx^{k+1}$,
	$\D\bq^{k+1}\triangleq\bq^k-\bq^{k+1}$,
	$\D\bz^{k+1}\triangleq\bz^k-\bz^{k+1}$, and
	$\D\sdh(\bx^{k+1})\triangleq\sdh(\bx^k)-\sdh(\bx^{k+1})$. By the convexity of $\bh$, we have
	\begin{equation}\label{eq:mono_proof_1}
		\begin{array}{rcl}
			\langle\D\bx^{k+1},\D\sdh(\bx^{k+1})\rangle\geq0.
		\end{array}
	\end{equation}
	\an{Next, we use Lemma~\ref{lemma:rec_rel_alg1}, where 
		by taking the difference of \eqref{eq:up_line1} at the $k$-th and
		the $(k+1)$-at iteration, we obtain}
	\begin{equation}\label{eq:mono_proof_2}
		\begin{array}{c}
			\D\bx^{k+1}+cU^\T\D\bq^{k+1}+(B-c\Lap)(\D\sdh(\bx^{k+1})-\D\sdh(\bx^k))=\zero.
		\end{array}
	\end{equation}
	By combining~\eqref{eq:mono_proof_1} and \eqref{eq:mono_proof_2}, it
	follows that
	\begin{equation}\label{eq:mono_proof_3}
		\begin{array}{rl}
			\langle-cU^\T\D\bq^{k+1}-(B-c\Lap)(\D\sdh(\bx^{k+1})-\D\sdh(\bx^k)),\D\sdh(\bx^{k+1})\rangle\geq0.
		\end{array}
	\end{equation}
	Using the relation \eqref{eq:up_line2} for $\bq^k$, we see that 
	$\D\bq^{k+1}=-U\sdh(\bx^{k+1})$. Thus, we have
	\begin{equation*}%\label{eq:mono_proof_4}
		\D\bq^{k}-\D\bq^{k+1}=-U\D\sdh(\bx^{k+1}),
	\end{equation*}
	which when substituted into \eqref{eq:mono_proof_3} and re-arranging some terms
	yields 
	\begin{equation*}%\label{eq:mono_proof_5}
		\begin{array}{c}
			\langle c\D\bq^{k+1},\D\bq^{k}-\D\bq^{k+1}\rangle+\langle (B-c\Lap)\D\sdh(\bx^{k+1}),\D\sdh(\bx^k)-\D\sdh(\bx^{k+1})\rangle\geq0,
		\end{array}
	\end{equation*}
	or equivalently
	\begin{equation}\label{eq:mono_proof_6}
		\begin{array}{rcl}
			\langle \met\D\bz^{k+1},\D\bz^{k}-\D\bz^{k+1}\rangle\geq 0.
		\end{array}
	\end{equation}
	By applying the basic equality $2\langle \met\D\bz^{k+1},\D\bz^{k}-\D\bz^{k+1}\rangle=\|\D\bz^k\|_\met^2-\|\D\bz^{k+1}\|_\met^2-\|\D\bz^k-\D\bz^{k+1}\|_\met^2$
	to \eqref{eq:mono_proof_6}, we finally have
	\begin{equation*}%\label{eq:mono_proof_7}
		\begin{array}{rcl}
			\|\D\bz^k\|_\met^2-\|\D\bz^{k+1}\|_\met^2
			&\geq&\|\D\bz^k-\D\bz^{k+1}\|_\met^2\\
			&\geq&0,
		\end{array}
	\end{equation*}
	which implies \eqref{eq:monotonicity} and completes the proof.
\end{proof}

Based on Lemma~\ref{lemma:mono}, we have the following rate result for the first-order optimality residual. 

\begin{theorem}[Sublinear rate of Mirror-P-EXTRA]\label{theorem:o_1_k}
	Under the assumptions of Theorem~\ref{theorem:conv}, the first-order optimality residual decays to $0$ at an $o\left(\frac{1}{k}\right)$ rate, i.e.,
	\[\|U\sdh(\bx^{k+1})\|_\Fro^2=o\left(\frac{1}{k}\right),\qquad
	\|\bx^{k+1}-\br+cU^\T\bq^{k+1}\|_{\Fro}^2=o\left(\frac{1}{k}\right).\]
\end{theorem}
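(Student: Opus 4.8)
The statement is essentially a corollary of three facts already established: the contraction inequality~\eqref{eq:contraction} of Theorem~\ref{theorem:conv}, the monotonicity~\eqref{eq:monotonicity} of Lemma~\ref{lemma:mono}, and the scalar Proposition~\ref{prop:o_1_k}, combined with the exact recursions~\eqref{eq:up_line1}--\eqref{eq:up_line2} of Lemma~\ref{lemma:rec_rel_alg1}. I would set $a_k\triangleq\|\bz^k-\bz^{k+1}\|_\met^2$. Since $\met\succcurlyeq0$, we have $a_k\ge0$; summing~\eqref{eq:contraction} over $k=0,\dots,t$ telescopes to $\sum_{k=0}^{t}a_k\le\|\bz^0-\bz^*\|_\met^2$, so $\sum_{k=0}^\infty a_k<\infty$ (this is exactly the summability already noted inside the proof of Theorem~\ref{theorem:conv}); and Lemma~\ref{lemma:mono} gives $a_{k+1}\le a_k$ for all $k$. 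Thus $\{a_k\}$ meets the hypotheses of Proposition~\ref{prop:o_1_k}, which yields $a_k=o(1/k)$.

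\textbf{From $a_k$ to the two residuals.} It remains to bound each first-order optimality residual by a constant multiple of $a_k$. Because $\met=\mathrm{diag}(cI,\,B-c\Lap)$ with $c>0$ and $B-c\Lap\succcurlyeq0$, expanding the $\met$-seminorm gives
\[
a_k=c\,\|\bq^k-\bq^{k+1}\|_\Fro^2+\|\sdh(\bx^k)-\sdh(\bx^{k+1})\|_{B-c\Lap}^2 ,
\]
and both summands are nonnegative, so $\|\bq^k-\bq^{k+1}\|_\Fro^2\le a_k/c$ and $\|\sdh(\bx^k)-\sdh(\bx^{k+1})\|_{B-c\Lap}^2\le a_k$. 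For the consensus residual, relation~\eqref{eq:up_line2} gives $\bq^{k+1}-\bq^k=U\sdh(\bx^{k+1})$, hence $\|U\sdh(\bx^{k+1})\|_\Fro^2=\|\bq^k-\bq^{k+1}\|_\Fro^2\le a_k/c=o(1/k)$. For the feasibility residual, I would rewrite~\eqref{eq:up_line1} as $\bx^{k+1}-\br+cU^\T\bq^{k+1}=(B-c\Lap)\bigl(\sdh(\bx^k)-\sdh(\bx^{k+1})\bigr)$ and apply the elementary inequality $\|Mv\|_\Fro^2\le\laml{M}\,\langle v,Mv\rangle$, valid for any symmetric positive semidefinite $M$, with $M=B-c\Lap$ and $v=\sdh(\bx^k)-\sdh(\bx^{k+1})$; this gives $\|\bx^{k+1}-\br+cU^\T\bq^{k+1}\|_\Fro^2\le\laml{B-c\Lap}\,\|\sdh(\bx^k)-\sdh(\bx^{k+1})\|_{B-c\Lap}^2\le\laml{B-c\Lap}\,a_k=o(1/k)$.

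\textbf{Expected obstacle.} Honestly there is no hard step: the argument is a repackaging of estimates that already appear inside the proof of Theorem~\ref{theorem:conv}, the only genuinely new input being Proposition~\ref{prop:o_1_k}. The single point worth a remark is that $B-c\Lap$ need only be positive \emph{semi}definite, so $\|\cdot\|_{B-c\Lap}$ is merely a seminorm; this is harmless here because in the chain above the factor $(B-c\Lap)$ is applied to $\sdh(\bx^k)-\sdh(\bx^{k+1})$ \emph{before} any norm is taken, so it already annihilates the component of that difference in $\nul{B-c\Lap}$ and no separate control of that component is needed. If one later prefers the residuals stated purely in terms of $\bx^k$, the same bounds transfer verbatim upon using Assumption~\ref{ass:conn_and_prop} to pass between $U$ and $\Lap$.
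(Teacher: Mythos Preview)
Your proposal is correct and follows essentially the same approach as the paper: set $a_k=\|\bz^k-\bz^{k+1}\|_\met^2$, invoke summability from Theorem~\ref{theorem:conv} and monotonicity from Lemma~\ref{lemma:mono}, apply Proposition~\ref{prop:o_1_k}, and then read off the two residuals via the identities from Lemma~\ref{lemma:rec_rel_alg1} exactly as already done inside the proof of Theorem~\ref{theorem:conv}. The paper's own proof is a two-line sketch pointing to precisely these ingredients; you have simply written out the details.
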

\begin{proof}
	As an immediate consequence of 
	Theorem~\ref{theorem:conv} and Proposition~\ref{prop:o_1_k}, we can see that 
	Lemma~\ref{lemma:mono} holds for the sequence $a_k=\|\bz^k-\bz^{k+1}\|_\met^2$, which implies 
	the stated result.
\end{proof}

{\color{black} Using Theorem~\ref{theorem:o_1_k}, we can obtain the vanishing speed
for the violation of the global constraint $\one^\T(\bx-\br)=0$ by noticing that
$\|\one^\T(\bx^k-\br)\|_\Fro^2=\|\bx^k-\br+cU^\T\bq^{k}\|_{\one\one^\T}^2=o(1/k)$. 
In reference~\cite{aybat2016distributed2}, an ergodic rate $\|\one^\T(\bar{\bx}^k-\br)\|=O(1/k)$ 
is derived for the running average  $\bar{\bx}^k=\sum_{t=0}^k\bx^t/(1+k)$. 
However, we would like to point out that our bound should not be considered worse than that has appeared in \cite{aybat2016distributed2} since our rate is non-ergodic.
%Furthermore, although it seems not straightforward to see how our results imply the convergence speed of the conventional zero-order optimality gap, namely, an upper bound on $|\bh(\bx^k)-\bh(\bx^*)|$, our bounds may have some advantages for practical use. The bound on the quantity $|\bh(\bx^k)-\bh(\bx^*)|$, though intuitive, can not be used to serve as a stopping criterion since we do not know the optimal value $\bh(\bx^*)$ in advance. 
%In contrast, based on Theorem~\ref{theorem:o_1_k}, 
%the algorithm can be stopped once the consensus violation in the subgradients (or actually equivalent to a successive difference) $\|\by^{k+1}-\by^k\|_\Fro^2=\|\Lap\bs^{k+1}\|_\Fro^2=\|U\sdh(\bx^{k+1})\|_U^2=o(1/k)$ is small enough (see \eqref{eq:up_p1_line3} and Algorithm 1 for the definition of $\bs^{k+1}$).
%NOTE
% I see no justification why the preceding should be a stopping criterion, since the consensus violation in the subgradients does not measure near-optimality of $x^{k+1}$ in any meaningful sense.
}

\an{Under additional assumptions on the objective function, 
	we can provide stronger convergence rate results. In particular, we will
	show a geometric convergence rate for the case when $\Omg$ is the full space, i.e, 
	the problem~\eqref{eq:basic} has no per-agent set constraints, in which case $\bh=\f$. 
	The geometric rate is obtained under strong convexity and the Lipschitz gradient property of the objective function $\f$. These properties are defined as follows.
	A function $f:\R^p\rightarrow\R$ is said to be $\mu$-strongly convex if $\langle\widetilde{\nabla}f(x)-\widetilde{\nabla}f(y),x-y\rangle\geq\mu\|x-y\|_2^2$ for some $\mu>0$ and for all $x,y\in\R^p$. 
	A gradient map $\nabla f:\R^p\rightarrow\R^p$ is said to be $L$-Lipschitz if 
	$\|\nabla f(x)-\nabla f(y)\|_2\leq L\|x-y\|_2$ for some $L>0$ and for all $x,y\in\R^p$. 
	We formally impose these properties in the following assumptions.}

\begin{assumption}\label{ass:asss_for_geo1}
	The gradient \an{map $\nabla f_i$} of each function $f_i:\R^p\to\R$ is $L_i$-Lipschitz.
\end{assumption}
\begin{assumption}\label{ass:asss_for_geo2}
	Each function $f_i$ is $\mu_i$-strongly convex.
\end{assumption}
Let us \an{introduce} $\mu=\min_i \mu_i$ and $L=\max_i L_i$.

\begin{theorem}[Linear rate of Mirror-P-EXTRA]\label{theorem:geo} 
	\an{
		Let $\Omega_i=\R^p$ for all $i$, and 
		let Assumption~\ref{ass:conn_and_prop},~\ref{ass:asss_for_geo1} and~\ref{ass:asss_for_geo2} hold.
		Then, for Algorithm 1 with the matrix parameters such that $B\succcurlyeq c\Lap$, 
		the sequence $\{\bz^k\}$ converges to $\bz^*$ with Q-linear rate in the (pseudo)metric induced by the matrix $\met$, where $\bz^k$, $\bz^*$ and $\met$ are defined in~\eqref{eq:met_z}. 
		In particular, we have
		$\|\bz^{k+1}-\bz^*\|_\met^2\leq\frac{1}{1+\delta}\|\bz^{k}-\bz^*\|_\met^2$ with some $\delta>0$. 
		Consequently, the sequence $\{\bx^k\}$ converges to the optimal solution $\bx^*$ of problem~\eqref{eq:basic}
		at an R-linear rate $O\left(\left(\frac{1}{1+\delta}\right)^k\right)$.
	}
\end{theorem}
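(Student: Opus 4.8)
The plan is to sharpen the argument of Theorem~\ref{theorem:conv} using Assumptions~\ref{ass:asss_for_geo1} and~\ref{ass:asss_for_geo2}. Since $\Omega_i=\R^p$ we have $\bh=\f$ and $\sdh=\df$. First, because each $f_i$ is $\mu_i$-strongly convex and has $L_i$-Lipschitz gradient, it is also co-coercive (Baillon--Haddad), so for any fixed weight $\theta\in(0,1)$ and all $\bx,\by$,
\[
\langle\df(\bx)-\df(\by),\bx-\by\rangle\ \geq\ \theta\mu\,\|\bx-\by\|_\Fro^2+\frac{1-\theta}{L}\,\|\df(\bx)-\df(\by)\|_\Fro^2 .
\]
Substituting this in place of the bare inequality~\eqref{eq:conv_p1} and then repeating, verbatim, the manipulations in the proof of Theorem~\ref{theorem:conv} (relations~\eqref{eq:up_line1}--\eqref{eq:up_line2} of Lemma~\ref{lemma:rec_rel_alg1} together with $\bx^*-\br=-cU^\T\bq^*$ and $U\df(\bx^*)=\zero$, followed by the polarization identity), I expect to reach the strengthened one-step inequality
\[
\|\bz^{k+1}-\bz^*\|_\met^2+S_{k+1}\ \leq\ \|\bz^k-\bz^*\|_\met^2,\qquad S_{k+1}:=\|\bz^k-\bz^{k+1}\|_\met^2+2\theta\mu\|\bx^{k+1}-\bx^*\|_\Fro^2+\tfrac{2(1-\theta)}{L}\|\df(\bx^{k+1})-\df(\bx^*)\|_\Fro^2 .
\]
It therefore suffices to produce a constant $\delta>0$ with $S_{k+1}\geq\delta\,\|\bz^{k+1}-\bz^*\|_\met^2$, since then $(1+\delta)\|\bz^{k+1}-\bz^*\|_\met^2\le\|\bz^k-\bz^*\|_\met^2$, which is the asserted Q-linear contraction.

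To bound $\|\bz^{k+1}-\bz^*\|_\met^2=c\,\|\bq^{k+1}-\bq^*\|_\Fro^2+\|\df(\bx^{k+1})-\df(\bx^*)\|_{B-c\Lap}^2$ by $S_{k+1}$, I would handle the two blocks separately. The $\df$-block is immediate: $\|\df(\bx^{k+1})-\df(\bx^*)\|_{B-c\Lap}^2\le\laml{B-c\Lap}\,\|\df(\bx^{k+1})-\df(\bx^*)\|_\Fro^2$, a fixed multiple of the last term of $S_{k+1}$. For the $\bq$-block, subtract the optimality relation~\eqref{eq:oc_line1} from~\eqref{eq:up_line1} to get $cU^\T(\bq^{k+1}-\bq^*)=-(\bx^{k+1}-\bx^*)-(B-c\Lap)(\df(\bx^{k+1})-\df(\bx^k))$; because $U$ has full row rank, $U^\T$ is injective and $\|U^\T\bv\|_\Fro^2\ge\lams{UU^\T}\|\bv\|_\Fro^2$ for all $\bv$, so with the triangle inequality
\[
c^2\,\lams{UU^\T}\,\|\bq^{k+1}-\bq^*\|_\Fro^2\ \leq\ 2\,\|\bx^{k+1}-\bx^*\|_\Fro^2+2\,\laml{B-c\Lap}^2\,\|\df(\bx^{k+1})-\df(\bx^k)\|_\Fro^2 .
\]
Here the first term is a multiple of the second term of $S_{k+1}$, and the gradient difference in the second term is controlled by $\|\bz^k-\bz^{k+1}\|_\met^2$ once $B-c\Lap$ is positive definite (eigenvalues bounded away from $0$), via $\|\df(\bx^{k+1})-\df(\bx^k)\|_\Fro^2\le\lams{B-c\Lap}^{-1}\,\|\df(\bx^{k+1})-\df(\bx^k)\|_{B-c\Lap}^2\le\lams{B-c\Lap}^{-1}\,\|\bz^k-\bz^{k+1}\|_\met^2$. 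Collecting these bounds yields an explicit $\delta>0$ depending on $\mu,L,\theta,c,\lams{UU^\T},\lams{B-c\Lap},\laml{B-c\Lap}$ (and one may optimize over $\theta$), which proves the contraction. The R-linear convergence $\|\bx^k-\bx^*\|_\Fro=O((1+\delta)^{-k/2})$ then follows from $\met\succ 0$ and the chain $\|\bz^{k+1}-\bz^*\|_\met^2\ge\lams{B-c\Lap}\,\|\df(\bx^{k+1})-\df(\bx^*)\|_\Fro^2\ge\lams{B-c\Lap}\,\mu^2\,\|\bx^{k+1}-\bx^*\|_\Fro^2$ (the last step by strong monotonicity of $\df$), while uniqueness of $\bx^*$—hence well-posedness of the limit, and of $\bq^*$ via injectivity of $U^\T$—comes from strong convexity of $\f$ on the affine constraint set.

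The main obstacle is the $\bq$-block: converting the one-step decrease of the merit function $\|\bz^k-\bz^*\|_\met^2$ back into a bound on the distance of $\bq^{k+1}$ to $\bq^*$. This is precisely where the algorithm's structure (full column rank of the coupling matrix $U^\T$) and the parameter condition enter, and where one must take $B-c\Lap$ strictly positive definite so that the $\met$-(semi)norm genuinely controls $\|\df(\bx^{k+1})-\df(\bx^k)\|_\Fro$; keeping every constant explicit so that $\delta$ is provably positive, and choosing $\theta$ to balance the strong-convexity term against the co-coercivity term, is the delicate bookkeeping. (Alternatively, one could invoke the linear-rate analysis of P-EXTRA in~\cite{Shi2015_2} and transport it through the ``mirror'' substitution of Section~\ref{sec:mirror}; the direct route above is self-contained.)
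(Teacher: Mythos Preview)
Your proposal follows essentially the same route as the paper's proof: strengthen the convexity inequality~\eqref{eq:conv_p1} with a combined strong-convexity/co-coercivity bound, rerun the manipulations of Theorem~\ref{theorem:conv} to get the one-step contraction plus surplus $S_{k+1}$, bound $c\|\bq^{k+1}-\bq^*\|_\Fro^2$ via the relation $cU^\T(\bq^{k+1}-\bq^*)=-(\bx^{k+1}-\bx^*)-(B-c\Lap)(\df(\bx^{k+1})-\df(\bx^k))$ together with full row rank of $U$, and absorb everything into $S_{k+1}$ to extract $\delta>0$. The paper uses the sharper inequality $\langle\df(\bx)-\df(\by),\bx-\by\rangle\ge\frac{\mu L}{\mu+L}\|\bx-\by\|_\Fro^2+\frac{1}{\mu+L}\|\df(\bx)-\df(\by)\|_\Fro^2$ rather than your convex combination, and keeps a free Young parameter $\gamma$ where you fix the constant~$2$; both are cosmetic.

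One genuine discrepancy: you rely on $B-c\Lap\succ 0$ (strict) in two places---to pass from $\|\df(\bx^{k+1})-\df(\bx^k)\|_{B-c\Lap}^2$ back to the unweighted Frobenius norm, and in the final chain $\|\bz^{k+1}-\bz^*\|_\met^2\ge\lams{B-c\Lap}\,\mu^2\|\bx^{k+1}-\bx^*\|_\Fro^2$. The theorem, however, only assumes $B\succcurlyeq c\Lap$, and with the parameter choice $B=cI$ (used in the corollaries) one can have $B-c\Lap$ singular. The paper sidesteps this by working throughout in the $(B-c\Lap)$-weighted (semi)norm: it compares $\|\cdot\|_{(B-c\Lap)^2}^2$ directly against $\|\cdot\|_{B-c\Lap}^2$ (which only needs $\laml{B-c\Lap}$), and for the final R-linear step it reuses the identity $\bx^{k+1}-\bx^*=-cU^\T(\bq^{k+1}-\bq^*)-(B-c\Lap)(\df(\bx^{k+1})-\df(\bx^k))$ to bound $\|\bx^{k+1}-\bx^*\|_\Fro^2$ by quantities already shown to decay geometrically, rather than going through strong monotonicity. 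Adjusting your argument along these lines removes the extra positive-definiteness requirement.
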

\begin{proof}
	Note that we have $\f=\bh$, so that
	by the strong convexity and the gradient Lipschitz property of $\bh$, it follows that \cred{(see Theorem 2.1.11 of \cite{nesterov2013introductory})}
	\begin{equation}\label{eq:geo_p1}
		\begin{array}{rcl}
			\frac{\mu L}{\mu+L}\|\bx^{k+1}-\bx^*\|_\Fro^2+\frac{1}{\mu+L}\|\dbh(\bx^{k+1})-\dbh(\bx^*)\|_\Fro^2&\leq&\langle\dbh(\bx^{k+1})-\dbh(\bx^*),\bx^{k+1}-\bx^*\rangle.
		\end{array}
	\end{equation}	
	Using arguments similar to those from \eqref{eq:conv_p1} to \eqref{eq:conv_p3}, where \eqref{eq:conv_p1} is replaced by \eqref{eq:geo_p1}, we obtain
	\begin{equation}\label{eq:geo_p2}
		\begin{array}{rcl}
			&    &\frac{\mu L}{\mu+L}\|\bx^{k+1}-\bx^*\|_\Fro^2+\frac{1}{\mu+L}\|\dbh(\bx^{k+1})-\dbh(\bx^*)\|_\Fro^2+\|\bz^k-\bz^{k+1}\|_\met^2\\
			&\leq&\|\bz^k-\bz^*\|_\met^2-\|\bz^{k+1}-\bz^*\|_\met^2.
		\end{array}
	\end{equation}
	To prove the linear convergence, 
	\an{it suffices to show that} $\delta\|\bz^{k+1}-\bz^*\|_\met^2\leq \|\bz^k-\bz^*\|_\met^2-\|\bz^{k+1}-\bz^*\|_\met^2$ for some $\delta>0$. Considering \eqref{eq:geo_p2}, this will hold as long as for some $\delta>0$ and all $k\ge0$ we have
	\begin{equation}\label{eq:geo_p3}
		\begin{array}{rcl}
			\delta\|\bz^{k+1}-\bz^*\|_\met^2\leq\frac{\mu L}{\mu+L}\|\bx^{k+1}-\bx^*\|_\Fro^2+\frac{1}{\mu+L}\|\dbh(\bx^{k+1})-\dbh(\bx^*)\|_\Fro^2+\|\bz^k-\bz^{k+1}\|_\met^2.
		\end{array}
	\end{equation}
	\an{By the definition of $\bz^k$, we have 
		\begin{equation}\label{eq:step1}
			\|\bz^k-\bz^{k+1}\|_\met^2=c\|\bq^{k+1}-\bq^k\|_\Fro^2 +\|\dbh(\bx^k)-\dbh(\bx^{k+1})\|_{B-c\Lap}^2,
		\end{equation}
		while by Lemma~\ref{lemma:rec_rel_alg1} we have
		\begin{equation}\label{eq:step2}
			\|\bq^{k+1}-\bq^k\|_\Fro^2= \|U\nabla \bh(\bx^{k+1})\|_\Fro^2=
			\|U\nabla \bh(\bx^{k+1})-U\nabla \bh(\bx^*)\|_\Fro^2,
		\end{equation}
		where in the last equality we use the fact $U\nabla \bh(\bx^*)=\zero$ (see~\eqref{eq:oc_line2}).
		Substituting~\eqref{eq:step1} and~\eqref{eq:step2} in~\eqref{eq:geo_p3}, we conclude that for the linear convergence, it is sufficient to show the following relation:}
	\begin{equation}\label{eq:geo_p4}
		\begin{array}{rcl}
			&& \delta c\|\bq^{k+1}-\bq^*\|_\Fro^2+\delta\|\dbh(\bx^{k+1})-\dbh(\bx^*)\|_{B-c\Lap}^2
			\leq \frac{\mu L}{\mu+L}\|\bx^{k+1}-\bx^*\|_\Fro^2 +
			\frac{1}{\mu+L}\|\dbh(\bx^{k+1})-\dbh(\bx^*)\|_\Fro^2\cr
			&&+c\|U(\dbh(\bx^{k+1})-\dbh(\bx^*))\|_\Fro^2+\|\dbh(\bx^k)-\dbh(\bx^{k+1})\|_{B-c\Lap}^2.\qquad\qquad
		\end{array}
	\end{equation}
	
	\an{
		From \eqref{eq:oc_line1} and \eqref{eq:up_line1} we have
		\begin{equation}\label{eq:oc_up_line1}	
			\bx^{k+1}-\bx^*+cU^\T(\bq^{k+1}-\bq^*)+(B-c\Lap)(\dbh(\bx^{k+1})-\dbh(\bx^k))=\zero,
		\end{equation}
		implying that 
		\begin{eqnarray}\label{eq:geo_p4-1}
			\|cU^\T(\bq^{k+1}-\bq^*)\|^2_\Fro 
			& = & \|\bx^{k+1}-\bx^*+(B-c\Lap)(\dbh(\bx^{k+1})-\dbh(\bx^k)\|^2_\Fro\cr
			&\le & (1+\gamma)\|\bx^{k+1}-\bx^*\|_\Fro^2
			+\left(1+\frac{1}{\gamma}\right)\|\dbh(\bx^{k+1})-\dbh(\bx^k)\|_{(B-c\Lap)^2}^2,
		\end{eqnarray}
		where $\gamma>0$ is arbitrary.
	}
	%Since every column of $\bq^*$ is $(n-1)$-dimensional and the rank of $U\in\R^{(n-1)\times n}$ is $(n-1)$, all columns of $\bq^*$ should be in the linear subspace $\spa{U}$. The update of $\bq^k$ in~\eqref{eq:up_line2} implies that all columns of $\bq^k$ are in the linear subspace $\spa{U}$ \an{(when initiated as given in Lemma~\ref{lemma:rec_rel_alg1}). Thus,
	\ws{Since the matrix $U\in\R^{(n-1)\times n}$ is full row rank, we have
		\[ c^2\lams{UU^\T}\|\bq^{k+1}-\bq^*\|^2_\Fro \le \|cU^\T(\bq^{k+1}-\bq^*)\|^2_\Fro.\]
		This together with  $\lams{UU^\T}=\lams{\Lap}$, 
		when substituted in~\eqref{eq:geo_p4-1} 
		yields
		\begin{equation}\label{eq:geo_p5}
			\begin{array}{rcl}
				c^2\lams{\Lap}\|\bq^{k+1}-\bq^*\|_\Fro^2\leq(1+\gamma)\|\bx^{k+1}-\bx^*\|_\Fro^2+\left(1+\frac{1}{\gamma}\right)\|\dbh(\bx^{k+1})-\dbh(\bx^k)\|_{(B-c\Lap)^2}^2.
			\end{array}
	\end{equation}}
	\an{Combining \eqref{eq:geo_p4} and \eqref{eq:geo_p5},
		we find that, in order to establish the rate result for $\|\bz^{k+1}-\bz^*\|_\met^2$, 
		it suffices to show that}
	\begin{equation}\label{eq:geo_p6}
		\begin{array}{rl}
			&\frac{\delta}{c\lams{\Lap}}\left(1+\frac{1}{\gamma}\right)\|\dbh(\bx^{k+1})-\dbh(\bx^k)\|_{(B-c\Lap)^2}^2+\|\dbh(\bx^{k+1})-\dbh(\bx^*)\|_{\delta(B-c\Lap)-\frac{1}{\mu+L}I-c\Lap}^2\\
			\leq&
			\|\dbh(\bx^k)-\dbh(\bx^{k+1})\|_{B-c\Lap}^2+\left(\frac{\mu L}{\mu+L}-\frac{\delta}{c\lams{\Lap}}(1+\gamma)\right) \|\bx^{k+1}-\bx^*\|_\Fro^2.
		\end{array}
	\end{equation}
	
	Now, we utilize the basic properties of function $\bh$ to verify the validness of \eqref{eq:geo_p6}. Considering the Lipschitz continuity of $\dbh$,	we only need to determine a positive $\delta$ such that
\begin{subequations}\label{eq:geo_p7_ori}
	\begin{numcases}{}
			\frac{\delta}{c\lams{\Lap}}\left(1+\frac{1}{\gamma}\right)(B-c\Lap)\preccurlyeq I,\label{eq:geo_p7_ori_ln1}\\
			\hbox{}\cr
			\left(\delta(B-c\Lap)-\frac{1}{\mu+L}I-c\Lap\right)L^2\preccurlyeq\left(\frac{\mu L}{\mu+L}-\frac{\delta}{c\lams{\Lap}}(1+\gamma)\right)I,\label{eq:geo_p7_ori_ln2}\\
			\hbox{}\cr
			0\preccurlyeq\left(\frac{\mu L}{\mu+L}-\frac{\delta}{c\lams{\Lap}}(1+\gamma)\right)I.\label{eq:geo_p7_ori_ln3}
			\end{numcases}
\end{subequations}
	or 
\begin{subequations}\label{eq:geo_p8_ori}
	\begin{numcases}{}
		\frac{\delta}{c\lams{\Lap}}\left(1+\frac{1}{\gamma}\right)(B-c\Lap)\preccurlyeq I,\label{eq:geo_p8_ori_ln1}\\
		\hbox{}\cr
		-\mu^2\left(\delta(B-c\Lap)-\frac{1}{\mu+L}I-c\Lap\right)\succcurlyeq-\left(\frac{\mu L}{\mu+L}-\frac{\delta}{c\lams{\Lap}}(1+\gamma)\right)I,\label{eq:geo_p8_ori_ln2}\\
		\hbox{}\cr
		-\left(\delta(B-c\Lap)-\frac{1}{\mu+L}I-c\Lap\right)\succcurlyeq 0. \label{eq:geo_p8_ori_ln3}
	\end{numcases}{}.
\end{subequations}

Let us use $\lhs{i}$ and $\rhs{i}$ (see Section \ref{sec:notation}) to refer to key terms appeared in \eqref{eq:geo_p6} and elaborate how \eqref{eq:geo_p7_ori} and \eqref{eq:geo_p8_ori} are obtained. The (matrix) inequality \eqref{eq:geo_p7_ori_ln1} and \eqref{eq:geo_p8_ori_ln1} are identical and due to requiring $\lhs{1}\leq\rhs{1}$; the matrix inequalities \eqref{eq:geo_p7_ori_ln2} and \eqref{eq:geo_p7_ori_ln3} are due to using the Lipschitz continuity of the gradient to generate a lower bound for $\rhs{2}$ and requiring $\lhs{2}$ bing bounded by it; the matrix inequalities \eqref{eq:geo_p8_ori_ln2} and \eqref{eq:geo_p8_ori_ln3} are due to using the strong convexity of $\bh$ to generate an upper bound for $-\rhs{2}$, i.e.,
\begin{equation}
\begin{array}{rcl}
\mu\|\bx^{k+1}-\bx^*\|_\Fro^2&\leq&\langle\bx^{k+1}-\bx^*,\dbh(\bx^{k+1})-\dbh(\bx^*)\rangle\\
&\leq&\|\bx^{k+1}-\bx^*\|\cdot\|\dbh(\bx^{k+1})-\dbh(\bx^*)\|,
\end{array}
\end{equation}
and requiring this upper bound being further bounded by $-\lhs{2}$.

\an{By re-arranging terms and shrinking the feasible regions of \eqref{eq:geo_p7_ori} and \eqref{eq:geo_p8_ori}, we re-write these conditions in more compact forms as follows:
		\begin{subequations}\label{eq:geo_p7}
			\begin{numcases}{}
				\frac{\delta\laml{B-c\Lap}}{c\lams{\Lap}}\left(1+\frac{1}{\gamma}\right)\leq1,\label{eq:geo_p7_l1}\\
				\delta L^2 (B-c\Lap)+\frac{\delta}{c\lams{\Lap}}(1+\gamma)I
				\preccurlyeq L I+ cL^2 \Lap,\label{eq:geo_p7_l2}\\
				\frac{\delta}{c\lams{\Lap}}(1+\gamma)\le \frac{\mu L}{\mu+L},\label{eq:geo_p7_l3}
			\end{numcases}
		\end{subequations}
		\begin{subequations}\label{eq:geo_p8}
			\begin{numcases}{}
			\frac{\delta\laml{B-c\Lap}}{c\lams{\Lap}}\left(1+\frac{1}{\gamma}\right)\leq1,\label{eq:geo_p8_l1}\\
			\delta\mu^2(B-c\Lap) +\frac{\delta}{c\lams{\Lap}}(1+\gamma)I\preccurlyeq
			\mu I + c\mu^2\Lap,\label{eq:geo_p8_l2}\\
			\delta(B-c\Lap) \preccurlyeq\frac{1}{\mu+L}I+c\Lap.\label{eq:geo_p8_l3}
			\end{numcases}
		\end{subequations}
		For any $\gamma>0$, a positive small enough $\delta$ exists that satisfies either $\delta\leq\delta_1=\sup\{\delta>0\mid\delta\text{ satisfies \eqref{eq:geo_p7} }\}$ or $\delta\leq\delta_2=\sup\{\delta>0\mid \delta\text{ satisfies \eqref{eq:geo_p8} }\}$. Thus, we can find a positive $\delta\leq\max\{\delta_1,\delta_2\}$ such that $\|\bz^{k+1}-\bz^*\|_\met^2\leq\frac{1}{1+\delta}\|\bz^{k}-\bz^*\|_\met^2$.}
	
	\ws{By the definition of $\bz^k$, $\bz^*$, and $\met$, the above Q-linear rate of $\{\|\bz^{k}-\bz^*\|_\met^2\}$ immediately translates into the R-linear rates:
		$\|\bq^k-\bq^*\|_\Fro^2=O\left(\left(\frac{1}{1+\delta}\right)^k\right)$ and $\|\dbh(\bx^k)-\dbh(\bx^*)\|_{B-c\Lap}^2=O\left(\left(\frac{1}{1+\delta}\right)^k\right)$.
		From \eqref{eq:oc_up_line1}, we have
		\begin{equation}\label{eq:geo_p9}	
			\begin{array}{rcl}
				\|\bx^{k+1}-\bx^*\|_\Fro^2
				&=&O(\|\bq^{k+1}-\bq^*\|_\Fro^2)+O(\|(B-c\Lap)(\dbh(\bx^{k+1})-\dbh(\bx^k))\|_\Fro^2)\\
				&=&O(\|\bq^{k}-\bq^*\|_\Fro^2)+O(\|\dbh(\bx^{k})-\dbh(\bx^*)\|_{B-c\Lap}^2)\\
				&=&O\left(\left(\frac{1}{1+\delta}\right)^k\right).		
			\end{array}
	\end{equation}}
\end{proof}

In the above proof of Theorem \ref{theorem:geo}, explicit bounds on $\delta$ can be easily derived when we further choose $B=\beta I$. In this case, a sufficient condition on $\delta$ would be either
\begin{equation}\label{eq:final_delta}
	\delta\leq\delta_1\triangleq\min\left\{\frac{c\lams{\Lap}}{\beta(1+\frac{1}{\gamma})},\frac{cL\lams{\Lap}}{\beta cL^2\lams{\Lap}+1+\gamma},\frac{\mu Lc\lams{\Lap}}{(\mu+L)(1+\gamma)}\right\},
\end{equation}
or
\begin{equation}\label{eq:final_delta2}
\delta\leq\delta_2\triangleq\min\left\{\frac{c\lams{\Lap}}{\beta(1+\frac{1}{\gamma})},\frac{c\mu\lams{\Lap}}{\beta c\mu^2\lams{\Lap}+1+\gamma},\frac{1}{(\mu+L)\beta}\right\},
\end{equation}
where $\gamma>0$ is a parameter that we can choose. %which are obtained from \eqref{eq:geo_p7} and \eqref{eq:geo_p8}, respectively, \an{for an arbitrary $\gamma>0$.} Thus under any algorithmic parameter settings $\beta>c>0$, we can find a positive $\delta\leq\max\{\delta_1,\delta_2\}$ such that $\|\bz^{k+1}-\bz^*\|_\met^2\leq\frac{1}{1+\delta}\|\bz^{k}-\bz^*\|_\met^2$. With this, 
Consequently, we have the following two corollaries.

\begin{corollary}[The scalability of Mirror-P-EXTRA]\label{corollary:scalability} 
	Under the conditions of Theorem~\ref{theorem:geo}, and further choosing $B=\beta I$, with appropriate parameter settings in Mirror-P-EXTRA, to reach $\varepsilon$-accuracy, 
	the number of iterations needed is of the order $O\left((\kappa_{\Lap}+\sqrt{\kappa_{\Lap}\kappa_\f})\ln\left(\frac{1}{\varepsilon}\right)\right)$, where $\kappa_\f=\frac{L}{\mu}$ is the condition number of 
	the objective function $\f$ and $\kappa_\Lap=\frac{1}{\lams{\Lap}}$. 
	%is the condition number of the graph. 
\end{corollary}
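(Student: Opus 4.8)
The plan is to feed the explicit lower bounds $\delta_1$ and $\delta_2$ on the linear-rate parameter $\delta$ from~\eqref{eq:final_delta}--\eqref{eq:final_delta2} into an optimization over the remaining free parameters $\beta$, $c$ and $\gamma$, and then convert the resulting estimate on $\delta$ into an iteration count. Following the discussion after Assumption~\ref{ass:conn_and_prop}, I will take $\Lap$ normalized so that $\laml{\Lap}=1$ (e.g.\ $\Lap=\Laps/\laml{\Laps}$), so that $\kappa_\Lap=1/\lams{\Lap}$ is exactly the condition number of $\Lap$; then the requirement $B=\beta I\succcurlyeq c\Lap$ of Theorem~\ref{theorem:geo} reads $\beta\ge c$, and since $\beta$ enters~\eqref{eq:final_delta}--\eqref{eq:final_delta2} only through denominators I will always take $\beta$ at (a fixed multiple of) its smallest admissible value, $\beta=c$. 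Throughout write $\lambda=\lams{\Lap}$, so $\kappa_\Lap=1/\lambda\ge 1$, and recall $\kappa_\f=L/\mu\ge 1$.

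The first step is an easy reduction on the output side. By Theorem~\ref{theorem:geo} and~\eqref{eq:geo_p9} we have $\|\bx^k-\bx^*\|_\Fro^2=O\big((1+\delta)^{-k}\big)$, and the admissibility conditions behind~\eqref{eq:final_delta}--\eqref{eq:final_delta2} force $\delta\le 1$ (their first terms are already $\le\lams{\Lap}/\laml{\Lap}\le 1$), so $\ln(1+\delta)\ge\delta/2$ and an $\varepsilon$-accurate solution is reached after $O\big(\delta^{-1}\ln(\varepsilon^{-1})\big)$ iterations. It therefore suffices to exhibit parameters for which $\delta\gtrsim 1/(\kappa_\Lap+\sqrt{\kappa_\Lap\kappa_\f})$; and since $\kappa_\Lap+\sqrt{\kappa_\Lap\kappa_\f}=\Theta\big(\max\{\kappa_\Lap,\sqrt{\kappa_\Lap\kappa_\f}\}\big)$, I will do this separately in the regimes $\kappa_\f\le\kappa_\Lap$ and $\kappa_\f>\kappa_\Lap$.

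In the regime $\kappa_\f\le\kappa_\Lap$ I would use $\delta_1$ of~\eqref{eq:final_delta}. Taking $c\asymp 1/(L\lambda)$ is essentially the largest value compatible with the middle term staying $\gtrsim\lambda$; for this $c$, the last term $\tfrac{\mu L c\lambda}{(\mu+L)(1+\gamma)}$ is $\gtrsim\lambda$ precisely when $1+\gamma\lesssim\kappa_\Lap/\kappa_\f$, which in this regime is $\ge 1$, so we may take $\gamma\asymp\kappa_\Lap/\kappa_\f$; and such a large $\gamma$ renders the first term $\tfrac{c\lambda}{\beta(1+1/\gamma)}$ of order $\lambda=1/\kappa_\Lap$. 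Hence $\delta_1\gtrsim 1/\kappa_\Lap$. In the regime $\kappa_\f>\kappa_\Lap$ I would use $\delta_2$ of~\eqref{eq:final_delta2}, taking $c\asymp(\lambda\mu L)^{-1/2}$ and $\gamma=\sqrt{\kappa_\Lap/\kappa_\f}\le 1$ (with $\beta=c$ as above). Then $c^2\mu^2\lambda=\Theta(1/\kappa_\f)=O(1)$ and $1+\gamma=O(1)$, and a short computation shows all three terms of~\eqref{eq:final_delta2} are $\Theta\big(\sqrt{\lambda\mu/L}\,\big)=\Theta\big(1/\sqrt{\kappa_\Lap\kappa_\f}\,\big)$ (the first since $\gamma/(1+\gamma)\asymp\gamma$, the middle since its numerator is $c\mu\lambda\asymp\sqrt{\lambda\mu/L}$ and its denominator is $O(1)$, the last since $\mu+L=\Theta(L)$ so $1/((\mu+L)\beta)\asymp 1/(Lc)$). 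Hence $\delta_2\gtrsim 1/\sqrt{\kappa_\Lap\kappa_\f}$. Taking $\delta=\max\{\delta_1,\delta_2\}$ gives $\delta\gtrsim 1/(\kappa_\Lap+\sqrt{\kappa_\Lap\kappa_\f})$ in every case, and plugging this into the reduction of the previous paragraph yields the claimed $O\big((\kappa_\Lap+\sqrt{\kappa_\Lap\kappa_\f})\ln(\varepsilon^{-1})\big)$.

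The step I expect to be the main obstacle is purely quantitative bookkeeping: one must carry the hidden constants through so that a \emph{single} triple $(\beta,c,\gamma)$ satisfies all three inequalities bundled into~\eqref{eq:final_delta} (respectively~\eqref{eq:final_delta2}) at once, rather than each one in isolation, and one must patch the two regimes together across the boundary region $\kappa_\Lap\asymp\kappa_\f$, where the constants chosen on either side must be reconciled. No new idea is needed beyond this careful tracking of constants.
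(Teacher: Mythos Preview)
Your proposal is correct and lands on the same complexity, but the paper's argument is more streamlined than your two-regime split. Like you, the paper sets $\beta=c$ (which already forces $\laml{\Lap}\le 1$, so your normalization is harmless), and works from the explicit bounds \eqref{eq:final_delta}--\eqref{eq:final_delta2}. The difference is that the paper makes a \emph{single} parameter choice,
\[
c=\Theta\!\left(\frac{1}{\sqrt{\mu L\,\lams{\Lap}}}\right),\qquad \gamma=\mu c,
\]
and then shows directly that, with this choice, the reciprocal of each of the three terms in \eqref{eq:final_delta} (and likewise in \eqref{eq:final_delta2}) is $O(\kappa_\Lap)+O(\sqrt{\kappa_\Lap\kappa_\f})$. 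In fact, after setting $\beta=c$ the iteration count from $1/\delta_1$ collapses to
\[
O\!\left(\frac{1+1/\gamma}{\lams{\Lap}}\right)+O(cL)+O\!\left(\frac{1+\gamma}{\mu c\,\lams{\Lap}}\right),
\]
and the choice $\gamma=\mu c$, $c=\Theta((\mu L\lams{\Lap})^{-1/2})$ balances these three terms simultaneously, yielding $O(\kappa_\Lap+\sqrt{\kappa_\Lap\kappa_\f})$ without any case analysis. Note that your regime-2 choice $c\asymp(\lambda\mu L)^{-1/2}$, $\gamma=\sqrt{\kappa_\Lap/\kappa_\f}$ is exactly the paper's choice (since $\mu c=\sqrt{\mu/(L\lambda)}=\sqrt{\kappa_\Lap/\kappa_\f}$), so your split really only introduces an alternate $c$ in the regime $\kappa_\f\le\kappa_\Lap$, which is unnecessary: the paper's uniform choice already delivers $\delta\gtrsim 1/\kappa_\Lap$ there. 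This dissolves the ``obstacle'' you flagged about reconciling constants across the boundary $\kappa_\Lap\asymp\kappa_\f$: there is no patching to do because one choice works everywhere.
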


\begin{proof}  
	Since it appears in both \eqref{eq:geo_p7} that the larger $\beta$ is, the worse the convergence rate is, let us choose $\beta=c$.
	
	From \eqref{eq:final_delta}, we know that to reach $\varepsilon$-accuracy, the number of iterations needed is (omitted the factor $\ln\left(\frac{1}{\varepsilon}\right)$ for briefness)
	\begin{equation}\label{eq:comp_p1}
		\begin{array}{rcl}
			&&O\left(\frac{\beta(1+\frac{1}{\gamma})}{c\lams{\Lap}}\right)+O\left(\frac{\beta cL^2\lams{\Lap}+1+\gamma}{cL\lams{\Lap}}\right)+O\left(\frac{(\mu+L)(1+\gamma)}{\mu Lc\lams{\Lap}}\right)\\
			&=&O\left(\frac{1+\frac{1}{\gamma}}{\lams{\Lap}}\right)+O\left(cL\right)+O\left(\frac{1+\gamma}{\mu c\lams{\Lap}}\right).
		\end{array}
	\end{equation}
	By choosing $\gamma=\mu c$ and setting an appropriate parameter $c=\Theta\left(\frac{1}{\sqrt{\mu L\lams{\Lap}}}\right)$, \eqref{eq:comp_p1} can be further simplified as $O\left(\frac{1}{\lams{\Lap}}\right)+O\left(\sqrt{\frac{L}{\mu}}\sqrt{\frac{1}{\lams{\Lap}}}\right)$. Thus, we conclude that the final complexity is 
	$O\left((\kappa_{\Lap}+\sqrt{\kappa_{\Lap}\kappa_{\f}})\ln\left(\frac{1}{\varepsilon}\right)\right)$, where 
	$\kappa_{\f}=\frac{L}{\mu}$ is the condition number of the objective function $\f$  
	and $\kappa_\Lap=\frac{1}{\lams{\Lap}}$ is understood as the condition number of the graph. 
\end{proof}

\begin{corollary}[Parameters selection and agent number dependency]\label{cor:two}
	Under the same conditions as those in Corollary \ref{corollary:scalability}, \ws{by letting
		$\Lap=0.5(I-W)$ where $W$ is a lazy Metropolis matrix, that is,
		\[
		\wdo_{ij}=\left\{
		\begin{array}{ll}
		1/\left(2\max\{|\Ni|,|\Nj|\}\right),     &\text{ if } ({j,i})\in\E,\\
		0,                              &\text{ if } ({j,i})\notin\E \text{ and } j\neq i,\\
		1-\sum_{l\in\Ni}\wdo_{il},&\text{ if } j=i,
		\end{array}
		\right.
		\]
		and by setting the parameters in Mirror-P-EXTRA as $c=\Theta\left(n\mu^{-0.5}L^{-0.5}\right)$ and $\beta=c$, to reach $\varepsilon$-accuracy, the number of iterations needed is of the order of $O\left((n^2+n\kappa_\f^{0.5})\ln\left(\varepsilon^{-1}\right)\right)$.} 
\end{corollary}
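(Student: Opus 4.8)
The plan is to read the bound off directly from Corollary~\ref{corollary:scalability}, whose conclusion is an iteration count $O\big((\kappa_\Lap+\sqrt{\kappa_\Lap\kappa_\f})\ln(\varepsilon^{-1})\big)$ with $\kappa_\Lap=1/\lams{\Lap}$, obtained with the settings $\beta=c$ and $c=\Theta\big(1/\sqrt{\mu L\,\lams{\Lap}}\big)$. Thus the entire task reduces to lower bounding the smallest nonzero eigenvalue $\lams{\Lap}$ of $\Lap=0.5(I-W)$ for the lazy Metropolis matrix $W$, and then checking that this choice of $\Lap$ is admissible for Theorem~\ref{theorem:geo} and produces the parameters named in the statement.

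First I would relate $\lams{\Lap}$ to the spectral gap of $W$. The lazy Metropolis matrix is symmetric and doubly stochastic, and a one-line computation gives $W_{ii}=1-\sum_{l\in\Ni}W_{il}\ge 1-|\Ni|\cdot\frac{1}{2|\Ni|}=\frac12$ for every $i$, so by Gershgorin $W\succcurlyeq 0$; combined with connectedness of $\G$ this yields $0\le\lambda_n(W)\le\cdots\le\lambda_2(W)<\lambda_1(W)=1$. The eigenvalues of $\Lap=0.5(I-W)$ are the numbers $\frac12(1-\lambda_i(W))$, hence $\lams{\Lap}=\frac12\big(1-\lambda_2(W)\big)$ and $\laml{\Lap}\le\frac12$. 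Then I would invoke the known worst-case bound on the spectral gap of the lazy Metropolis chain on a connected $n$-node graph, namely $1-\lambda_2(W)\ge c_0/n^2$ for a universal constant $c_0>0$ (which is tight up to constants, the path graph being extremal). This gives $\lams{\Lap}\ge c_0/(2n^2)$, i.e., $\kappa_\Lap=1/\lams{\Lap}=O(n^2)$.

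Next I would verify admissibility. The matrix $\Lap=0.5(I-W)$ is exactly item~(iii) in the list following Assumption~\ref{ass:conn_and_prop} (the lazy Metropolis $W$ is symmetric, doubly stochastic, compatible with $\G$, and $\laml{W-\one\one^\T/n}=\lambda_2(W)<1$ by the spectral gap bound), so Assumption~\ref{ass:conn_and_prop} holds and $\Lap=U^\T U$ for a full row-rank $U\in\R^{(n-1)\times n}$ with $\nul{U}=\spa{\one}$. Taking $B=\beta I$ and $\beta=c$, the condition $B\succcurlyeq c\Lap$ required by Theorem~\ref{theorem:geo} becomes $I\succcurlyeq\Lap$, which holds since $\laml{\Lap}\le\frac12<1$. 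Finally, using the worst-case order $1/n^2$ in place of $\lams{\Lap}$ in the prescription $c=\Theta\big(1/\sqrt{\mu L\,\lams{\Lap}}\big)$ of Corollary~\ref{corollary:scalability} yields exactly $c=\Theta\big(n\,\mu^{-0.5}L^{-0.5}\big)$ and $\beta=c$, matching the stated parameters.

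It then remains only to substitute $\kappa_\Lap=O(n^2)$ into the complexity of Corollary~\ref{corollary:scalability}:
\[
O\big((\kappa_\Lap+\sqrt{\kappa_\Lap\kappa_\f})\ln(\varepsilon^{-1})\big)
=O\big((n^2+\sqrt{n^2\kappa_\f})\ln(\varepsilon^{-1})\big)
=O\big((n^2+n\kappa_\f^{0.5})\ln(\varepsilon^{-1})\big),
\]
which is the assertion. The one genuinely nontrivial ingredient is the lower bound $1-\lambda_2(W)\ge c_0/n^2$ on the spectral gap of the lazy Metropolis matrix; everything else is bookkeeping around Corollary~\ref{corollary:scalability}. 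I expect that spectral-gap estimate to be the main obstacle in a self-contained treatment — the standard route is a canonical/distinguished-paths argument that exploits the fact that every lazy Metropolis edge weight is of order $1/n$ and the diagonal laziness is at least $1/2$ — but since it is a well-documented fact about reversible Markov chains on graphs, the cleanest presentation simply cites it and proceeds as above.
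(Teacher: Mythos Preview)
Your proposal is correct and takes essentially the same approach as the paper: the paper also omits a detailed proof, noting that the result follows from Corollary~\ref{corollary:scalability} together with the bound $\kappa_\Lap=O(n^2)$, which it obtains by citing the spectral-gap estimate $\sigl{W-\one\one^\T/n}\le 1-\tfrac{2}{71n^2}$ for the lazy Metropolis matrix (Lemma~2.2 of \cite{Olshevsky2014}). Your added verifications (positive semidefiniteness of $W$ via Gershgorin, admissibility of $\Lap$ and of $B=\beta I$ with $\beta=c$) are correct and simply make explicit what the paper leaves implicit.
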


\ws{We omit the proof of Corollary~\ref{cor:two} since it is just the consequence of 
	the fact $\kappa_\Lap=O(n^2)$ under the choice\footnote{\ws{When $W$ is the lazy Metropolis matrix, $\max\{|\laml{W}|,|\lams{W}|\}\leq\sigl{W-\one\one^\T/n}\leq 1-\frac{2}{(71n^2)}$ (see Lemma 2.2 of reference \cite{Olshevsky2014}). Thus $\lams{0.5(I-W)}\geq 1/(71n^2)$ and therefore $\kappa_\Lap=O(n^2)$.}} $\Lap=0.5(I-W)$ and following the argument in the proof of Corollary \ref{corollary:scalability}. For more bounds on $\kappa_\Lap$ which will yield to different scalabilities for other classes of graphs, the readers are refereed to Remark 2 of reference \cite{Nedich2016}.}

\cred{In the scenario when $\bh=\f+\g$ is strongly convex but not necessarily smooth (general convex per-agent constraints $\Omega_i$'s are allowed), one may be able to obtain $O(1/k^2)$ convergence rate on $\|\bx^k-\bx^*\|_\Fro^2$. But this is beyond the coverage of our current paper and will be one of our future studies.}
		
Although Algorithm 1 has fast convergence rate and scales favorably with the condition number of the graph 
and the objective function, 
the computational cost per-iteration is high. In the next section, we propose a gradient-based scheme which 
is specialized for the unconstrained case ($\Omega_i=\R^p$) and has a low per-iteration cost. 

\subsection{The algorithm specialized for the unconstrained case: Mirror-EXTRA}
In this section we concern the resource allocation without local constraints, i.e.,
\begin{equation}\label{eq:F2}
	\begin{array}{cl}
		\min\limits_{\bx} & \f(\bx)=\sum\limits_{i=1}^n
		f_i(x_i),\\
		\st & \one^\T(\bx-\br)=\zero.
	\end{array}
\end{equation}

Our proposed Algorithm 2 applies to \eqref{eq:F2} when the objective function has Lipschitz gradients,
and it is given as follows.
\newpage
\begin{center}
	{\textbf{Algorithm 2: Mirror-EXTRA}}
	
	\smallskip
	\begin{tabular}{l}
		\hline
		\emph{  } A parameter $c$ is selected such that $c\in\left(0,\frac{1}{2L\laml{\Lap}}\right)$;\\
		\emph{  } Each agent $i$ initializes with $x_i^0\in\R^p$ and $y_i^{-1}=0$;\\
		\emph{  } Each agent $i$ \textbf{for} $k=0,1,\ldots$ \textbf{do} \\
		\qquad$y_i^{k}=y_i^{k-1}+\sum_{j\in\Ni\cup\{i\}}\text{\L}_{ij}\dfj(x_j^{k})$;\\
		\qquad$x_i^{k+1}=r_i-2cy_i^k+cy_i^{k-1}$;\\
		\emph{  } \textbf{end}\\
		\hline
	\end{tabular}
\end{center}
\smallskip

\an{Compared to Algorithm 1, Algorithm 2 has a lower per-iteration cost in the update of $x_i^k$ which requires
	a gradient evaluation instead of ``prox"-type (minimization) update.  We next provide an alternative description
	Algorithm 2 that we use in the analysis of the method. } 

\begin{lemma}\label{lemma:rec_rel_alg2}
	\an{Let $\Omega_i=\R^p$ for all $i$ and let Assumptions~\ref{assum:all}, \ref{ass:conn_and_prop}, and~\ref{ass:asss_for_geo1} be satisfied.}
	Then, the sequence $\{\bx^k,\by^k\}$ generated by Algorithm 2 satisfies the following relations:
	\an{for any optimal solution $\bx^*\in\bX^*$ and for all} $k=0,1,\ldots$,
	\begin{subequations}\label{eq:updates3}
		\begin{align}
			&\bx^{k+1}-\br+cU^\T\bq^{k+1}-c\Lap(\df(\bx^{k+1})-\df(\bx^k))=\zero,\label{eq:up2_line0}\\		
			&\bx^{k+1}-\bx^*+cU^\T(\bq^{k+1}-\bq^*)-c\Lap(\df(\bx^{k+1})-\df(\bx^k))=\zero,\label{eq:up2_line1}\\
			&\bq^{k+1}=\bq^k+U\df(\bx^{k+1}),\label{eq:up2_line2}\\
			&\by^k=U^\T\bq^k,\label{eq:up2_line3}
		\end{align}
	\end{subequations}
	where $\bx^0$ is the same as in Algorithm 2, $\bq^{-1}=\zero$, and $\bq^0=U\df(\bx^0)$.
\end{lemma}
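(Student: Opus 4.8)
The plan is to mimic the derivation of Lemma~\ref{lemma:rec_rel_alg1} for Algorithm~1, since Algorithm~2 is structurally its gradient-only specialization (there is no proximal/minimization step, and $\bh=\f$ because $\Omega_i=\R^p$). First I would write Algorithm~2's per-agent updates in the compact matrix notation of Subsection~\ref{sec:notation}: the update $y_i^k = y_i^{k-1}+\sum_{j\in\Ni\cup\{i\}}\text{\L}_{ij}\nabla f_j(x_j^k)$ becomes $\by^k=\by^{k-1}+\Lap\df(\bx^k)$, and $x_i^{k+1}=r_i-2cy_i^k+cy_i^{k-1}$ becomes $\bx^{k+1}=\br-2c\by^k+c\by^{k-1}$, with the initialization $\by^{-1}=\zero$ and $\bx^0$ arbitrary. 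Note that here the gradient map plays the role that the ``$\bs^k$'' variable played in Algorithm~1, so there is no analogue of the identity $\bs^k=\sdh(\bx^k)$ to establish — it comes for free.

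Next I would perform the same telescoping manipulation used in the proof of Lemma~\ref{lemma:rec_rel_alg1}: split $2c\by^k-c\by^{k-1}=c\by^k+c(\by^k-\by^{k-1})$ and substitute $\by^k-\by^{k-1}=\Lap\df(\bx^k)$ from the $\by$-update, giving $\bx^{k+1}=\br-c\by^k-c\Lap\df(\bx^k)$. Then, using Assumption~\ref{ass:conn_and_prop} to write $\Lap=U^\T U$, I introduce $\bq^k$ via $\by^k=U^\T\bq^k$; the $\by$-recursion $\by^k=\by^{k-1}+\Lap\df(\bx^k)=U^\T(\bq^{k-1}+U\df(\bx^k))$ is consistent with the choice $\bq^k=\bq^{k-1}+U\df(\bx^k)$, which with $\bq^{-1}=\zero$ gives $\bq^0=U\df(\bx^0)$ and reproduces exactly the $\{\by^k\}$ sequence of Algorithm~2 (the argument that the $\bq$-generated and $\by$-generated sequences agree is identical to the one in Lemma~\ref{lemma:rec_rel_alg1}). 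Substituting $\by^k=U^\T\bq^k$ into $\bx^{k+1}=\br-c\by^k-c\Lap\df(\bx^k)$ and shifting the index in $\bq^k=\bq^{k+1}-U\df(\bx^{k+1})$ (exactly as in the last step of the proof of Lemma~\ref{lemma:rec_rel_alg1}) yields $\bx^{k+1}-\br+cU^\T\bq^{k+1}-c\Lap(\df(\bx^{k+1})-\df(\bx^k))=\zero$, which is~\eqref{eq:up2_line0}; the relations~\eqref{eq:up2_line2} and~\eqref{eq:up2_line3} are just $\bq^{k+1}=\bq^k+U\df(\bx^{k+1})$ and $\by^k=U^\T\bq^k$.

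Finally, to obtain~\eqref{eq:up2_line1} I would invoke the first-order optimality condition for~\eqref{eq:F2}. Since $\Omega_i=\R^p$ we have $\sdh=\df$, so Lemma~\ref{lem:opc-res} (applied with the algorithm's parameter $c>0$) gives, for any $\bx^*\in\bX^*$, a matrix $\bq^*$ with $\bx^*-\br+cU^\T\bq^*=\zero$ and $U\df(\bx^*)=\zero$. Subtracting $\bx^*-\br+cU^\T\bq^*=\zero$ from~\eqref{eq:up2_line0} gives~\eqref{eq:up2_line1} directly. I expect no real obstacle here — the only points requiring a modicum of care are the bookkeeping on the initialization ($\bq^{-1}=\zero$, $\bq^0=U\df(\bx^0)$, matching the $y_i^{-1}=0$ initialization) and the observation that, unlike in Algorithm~1, the Lipschitz-gradient Assumption~\ref{ass:asss_for_geo1} is needed only to guarantee that $\df(\bx^k)$ is well defined and that the later convergence analysis applies; it plays no role in deriving the recursion itself, so Assumption~\ref{ass:asss_for_geo1} could in principle be dropped from this particular lemma, but I would keep it to match how the lemma is stated and used downstream.
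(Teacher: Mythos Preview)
Your proposal is correct and follows essentially the same approach as the paper: the paper simply states that \eqref{eq:up2_line0}, \eqref{eq:up2_line2}, and \eqref{eq:up2_line3} are obtained by the same analysis as in Lemma~\ref{lemma:rec_rel_alg1} (and omits the details), and that \eqref{eq:up2_line1} follows from \eqref{eq:up2_line0} via $\br=\bx^*+cU^\T\bq^*$ from Lemma~\ref{lem:opc-res}. Your write-up is in fact more explicit than the paper's own proof, and your side remark about Assumption~\ref{ass:asss_for_geo1} being inessential for the recursion itself is also correct.
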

\an{\begin{proof}
		Relations~\eqref{eq:up2_line0}, \eqref{eq:up2_line2}, and~\eqref{eq:up2_line3} are obtained
		using the analysis that is similar to that for deriving the corresponding relations in Lemma~\ref{lemma:rec_rel_alg1}, so we omit their proofs. Relation~\eqref{eq:up2_line1} is obtained 
		from~\eqref{eq:up2_line0} by using $\br= \bx^* +U^\T\bq^*$ (see Lemma~\ref{lem:opc-res}).
	\end{proof}
}

Comparing the description of Algorithm 2 (Lemma \ref{lemma:rec_rel_alg2}) and the description of Algorithm 1 (Lemma \ref{lemma:rec_rel_alg1}), we can see that the only 
difference between these algorithms is in the scaling matrix of the ``proximal term''
$(\sdh(\bx^{k+1})-\sdh(\bx^k))$. In Algorithm 1 the coefficient matrix is $\beta I-c\Lap$, 
while in Algorithm 2 it is 
$-c\Lap$. 
%Usually this matrix should be positive semi-definite, however, thanks to the Lipschitz continuity of the gradients, such recursion is still valid with a negative semi-definite coefficient matrix. 
We \an{refer to} the algorithm Mirror-EXTRA due to its relation to P-EXTRA and the fact that it features a gradient-based update, just as the EXTRA algorithm does for consensus optimization \cite{Shi2015}.

We start with a lemma that provides an important relation 
for the convergence analysis of Mirror-EXTRA.

\an{\begin{lemma}\label{lemma:contraction_lemma}
		Given that $a$ and $b$ are nonnegative, let sequences $\{\bv^k,\bu^k\}\in\R^{n\times p}\times \R^{n\times p}$ and a point $(\bv^*,\bu^*)\in\R^{n\times p}\times\R^{n\times p}$ be such that	
		\begin{equation}\label{eq:con_l2_1}
			b\|\bu^{k+1}-\bu^*\|_\Fro^2+a\|\bv^{k+1}-\bv^*\|_\Fro^2
			\leq b\|\bu^{k}-\bu^*\|_\Fro^2-b\|\bu^{k}-\bu^{k+1}\|_\Fro^2-b\|\bv^k-\bv^*\|_\Fro^2+b\|\bv^k-\bv^{k+1}\|_\Fro^2.
		\end{equation}
		Then, for any $\rho$ and $t$ such that
		\begin{numcases}{}
			a(1-t)\frac{1}{\rho}-b\leq at,\label{eq:con_l2_4_1}\\
			a(1-t)\frac{1}{1+\rho}-b>0,\label{eq:con_l2_4_2}
		\end{numcases}
		the sequence $\{b\|\bu^{k}-\bu^*\|_\Fro^2+ at\|\bv^k-\bv^*\|_\Fro^2\}$ is monotonically non-increasing
		and 
		$\sum_{k=0}^\infty \{b\|\bu^{k}-\bu^{k+1}\|_\Fro^2
		+\left(\frac{a(1-t)}{1+\rho}-b\right)\|\bv^k-\bv^{k+1}\|_\Fro^2\}<\infty$.
	\end{lemma}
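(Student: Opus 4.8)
The plan is to treat inequality~\eqref{eq:con_l2_1} as the raw recursion and convert it into a genuine Lyapunov (monotonicity) statement for the candidate energy $V_k := b\|\bu^{k}-\bu^*\|_\Fro^2 + at\|\bv^k-\bv^*\|_\Fro^2$. The only obstacle is that the right-hand side of~\eqref{eq:con_l2_1} contains the ``wrong sign'' term $+b\|\bv^k-\bv^{k+1}\|_\Fro^2$, which must be absorbed. The key device is the elementary inequality (Young / Peter--Paul): for any $\rho>0$,
\[
\|\bv^k-\bv^{k+1}\|_\Fro^2 \le (1+\rho)\|\bv^{k+1}-\bv^*\|_\Fro^2 + \left(1+\tfrac{1}{\rho}\right)\|\bv^k-\bv^*\|_\Fro^2,
\]
(or, more usefully in the form I actually need, a bound on $\|\bv^k-\bv^*\|^2$ vs.\ $\|\bv^{k+1}-\bv^*\|^2$ and $\|\bv^k-\bv^{k+1}\|^2$). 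First I would rewrite the $\bv$-difference terms on the RHS of~\eqref{eq:con_l2_1}: using $\|\bv^k-\bv^*\|_\Fro^2 \ge \frac{1}{1+\rho}\|\bv^{k+1}-\bv^*\|_\Fro^2 - \frac{1}{\rho}\|\bv^k-\bv^{k+1}\|_\Fro^2$ I can replace $-b\|\bv^k-\bv^*\|_\Fro^2 + b\|\bv^k-\bv^{k+1}\|_\Fro^2$ by a combination of $-\frac{b}{1+\rho}\|\bv^{k+1}-\bv^*\|_\Fro^2$ and $(b+\frac{b}{\rho})\|\bv^k-\bv^{k+1}\|_\Fro^2$. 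Hmm — actually the cleanest route is the opposite substitution, splitting $\|\bv^k-\bv^{k+1}\|_\Fro^2$; I would experiment with both and pick whichever produces exactly the coefficient $at$ on the $\|\bv^{k+1}-\bv^*\|_\Fro^2$ term and a nonnegative coefficient on $\|\bv^k-\bv^{k+1}\|_\Fro^2$. That matching of coefficients is precisely what conditions~\eqref{eq:con_l2_4_1} and~\eqref{eq:con_l2_4_2} encode: \eqref{eq:con_l2_4_1} forces the coefficient multiplying $\|\bv^{k+1}-\bv^*\|_\Fro^2$ to be at least $at$ (so the bracket can be written as $V_{k+1}$ plus nonnegative leftovers, using $a \ge at$ on the other piece coming from $a\|\bv^{k+1}-\bv^*\|^2$ already on the LHS), and \eqref{eq:con_l2_4_2} forces the residual coefficient $\frac{a(1-t)}{1+\rho}-b$ on $\|\bv^k-\bv^{k+1}\|_\Fro^2$ to be strictly positive.

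Concretely, after the substitution I expect to arrive at an inequality of the shape
\[
b\|\bu^{k+1}-\bu^*\|_\Fro^2 + at\|\bv^{k+1}-\bv^*\|_\Fro^2 + b\|\bu^k-\bu^{k+1}\|_\Fro^2 + \left(\tfrac{a(1-t)}{1+\rho}-b\right)\|\bv^k-\bv^{k+1}\|_\Fro^2 \le b\|\bu^k-\bu^*\|_\Fro^2 + at\|\bv^k-\bv^*\|_\Fro^2,
\]
i.e.\ $V_{k+1} + (\text{nonnegative slack}_k) \le V_k$, where the slack is $b\|\bu^k-\bu^{k+1}\|_\Fro^2 + (\frac{a(1-t)}{1+\rho}-b)\|\bv^k-\bv^{k+1}\|_\Fro^2 \ge 0$ by~\eqref{eq:con_l2_4_2} (and $b>0$, which is implicit). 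From here the two conclusions are immediate: monotonicity of $\{V_k\}$ follows by dropping the nonnegative slack, and telescoping $\sum_{k=0}^{t}(V_k - V_{k+1}) = V_0 - V_{t+1} \le V_0 < \infty$ gives $\sum_{k=0}^\infty \text{slack}_k \le V_0 < \infty$, which is exactly the claimed summability of $\sum_k \{b\|\bu^k-\bu^{k+1}\|_\Fro^2 + (\frac{a(1-t)}{1+\rho}-b)\|\bv^k-\bv^{k+1}\|_\Fro^2\}$.

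The main obstacle — really the only nontrivial point — is bookkeeping the Young-inequality split so that the coefficients land exactly on $at$ and $\frac{a(1-t)}{1+\rho}-b$ rather than something merely comparable; I would do this by first writing $a\|\bv^{k+1}-\bv^*\|_\Fro^2 = at\|\bv^{k+1}-\bv^*\|_\Fro^2 + a(1-t)\|\bv^{k+1}-\bv^*\|_\Fro^2$, moving the $a(1-t)$ part to the RHS to combine with $-b\|\bv^k-\bv^*\|_\Fro^2 + b\|\bv^k-\bv^{k+1}\|_\Fro^2$, and then applying $\|\bv^k-\bv^*\|_\Fro^2 \ge \frac{1}{1+\rho}\|\bv^{k+1}-\bv^*\|_\Fro^2 - \frac{1}{\rho}\|\bv^k-\bv^{k+1}\|_\Fro^2$ to that combination. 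The coefficient of $\|\bv^{k+1}-\bv^*\|_\Fro^2$ that must be dominated is then $a(1-t)\frac{1}{1+\rho}$ against the available $at + $ (whatever $b$-term), giving~\eqref{eq:con_l2_4_1}, while nonnegativity of the leftover $\|\bv^k-\bv^{k+1}\|_\Fro^2$ coefficient gives~\eqref{eq:con_l2_4_2}; I should double-check the $1/\rho$ versus $1/(1+\rho)$ placement against the precise statements of~\eqref{eq:con_l2_4_1}--\eqref{eq:con_l2_4_2} and transpose the split direction if needed. No deep idea is required beyond this careful accounting and one telescoping sum.
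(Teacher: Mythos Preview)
Your proposal is correct and follows essentially the same route as the paper: split $a\|\bv^{k+1}-\bv^*\|_\Fro^2$ into $at+a(1-t)$ parts, apply a Young/Peter--Paul inequality of the form $\frac{1}{1+\rho}\|\bx-\by\|_\Fro^2\le\frac{1}{\rho}\|\bx-\bz\|_\Fro^2+\|\by-\bz\|_\Fro^2$ (with $\bx=\bv^k$, $\by=\bv^{k+1}$, $\bz=\bv^*$) scaled by $a(1-t)$, and then use~\eqref{eq:con_l2_4_1} to land on $V_{k+1}+\text{slack}_k\le V_k$ before telescoping. Your caveat about ``double-checking the $1/\rho$ versus $1/(1+\rho)$ placement'' is warranted---the precise variant you want bounds $\|\bv^{k+1}-\bv^*\|_\Fro^2$ from below (equivalently, the paper adds the displayed inequality to~\eqref{eq:con_l2_1}) rather than $\|\bv^k-\bv^*\|_\Fro^2$, but once that bookkeeping is fixed your target display and the conclusion are exactly the paper's.
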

}

\begin{proof}\an{
		We start by showing a relation valid for arbitrary vectors $x,y,z$ and for any $\rho>0$: 
		\begin{equation}\label{eq:rel0}
			\frac{1}{1+\rho}\|x-y\|_2^2 \le \frac{1}{\rho}\|x-z\|_2^2 + \|y-z\|_2^2.
		\end{equation}
		The preceding relation follows by noting that 
		$\|x-y\|_2^2\le  (1+ \beta)\|x-z\|_2^2+(1+\frac{1}{\beta}) \|y-z\|_2^2$ for any $\beta>0$. 
		Upon dividing both sides with $(1+\frac{1}{\beta})$ and letting $\rho=\frac{1}{\beta}$, 
		we obtain~\eqref{eq:rel0}.
		We note that the relation is valid for Frobenius norm as well, thus for arbitrary matrices $\bx,\by,\bz\in\R^{n\times p}$ and any $\rho>0$, we have
		\begin{equation}\label{eq:rel01}
			\frac{1}{1+\rho}\|\bx-\by\|_\Fro^2 \le \frac{1}{\rho}\|\bx-\bz\|_\Fro^2 + \|\by-\bz\|_\Fro^2.
		\end{equation}
	}
	
	\an{Now, by multiplying the relation~\eqref{eq:rel01} 
		with $a(1-t)$ where $t\in(0,1)$, we obtain
		\begin{equation*}
			\frac{a(1-t)}{1+\rho}\|\bx - \by\|_\Fro^2 \le \frac{a(1-t)}{\rho}\|\bx - \bz\|_\Fro^2 + a(1-t)\|\by-\bz\|_\Fro^2.
		\end{equation*}
		Letting $\bx=\bv^k$, $\by=\bv^{k+1}$,  and $\bz=\bv^*$, we have
		\begin{equation}\label{eq:rel02}
			\frac{a(1-t)}{1+\rho}\|\bv^k-\bv^{k+1}\|_\Fro^2 
			\le \frac{a(1-t)}{\rho}\|\bv^k - \bv^*\|_\Fro^2 + a(1-t)\|\bv^{k+1}-\bv^*\|_\Fro^2.
		\end{equation}
		Next, by adding relations~\eqref{eq:rel02} and~\eqref{eq:con_l2_1}, we find that
		\begin{equation}\label{eq:con_l2_3}
			\begin{array}{rcl}
				&&b\|\bu^{k+1}-\bu^*\|_\Fro^2
				+at\|\bv^{k+1}-\bv^*\|_\Fro^2+b\|\bu^{k}-\bu^{k+1}\|_\Fro^2
				+\left(\frac{a(1-t)}{1+\rho}-b\right)\|\bv^k-\bv^{k+1}\|_\Fro^2\\
				&\leq& b\|\bu^{k}-\bu^*\|_\Fro^2+\left(\frac{a(1-t)}{\rho}-b\right)\|\bv^k-\bv^*\|_\Fro^2
			\end{array}
		\end{equation}
		for arbitrary $t\in(0,1)$ and $\rho>0$. If $\rho$ and $t$ are such that
		the conditions in~\eqref{eq:con_l2_4_1} and~\eqref{eq:con_l2_4_2} 
		are satisfied, 
		then $\{b\|\bu^{k}-\bu^*\|_\Fro^2+at\|\bv^k-\bv^*\|_\Fro^2\}$ is monotonically non-increasing and
		$\sum_{k=0}^\infty
		\{b\|\bu^{k}-\bu^{k+1}\|_\Fro^2+\left(\frac{a(1-t)}{1+\rho}-b\right)\|\bv^k-\bv^{k+1}\|_\Fro^2\}<\infty$.
	}
\end{proof}

\begin{remark}[Parameters $t$ and $\rho$]\label{remark:par}
	Regarding the conditions of Lemma~\ref{lemma:contraction_lemma} in~\eqref{eq:con_l2_4_1} and~\eqref{eq:con_l2_4_2}, we note that
	if $a>3b>0$, then it can be verified that the conditions are met by choosing 
	$\rho=\frac{a-b}{2b}$ and any $t\in[\frac{b}{a},\frac{a-b}{2a}).$ 
\end{remark}

\begin{theorem}[Convergence of Mirror-EXTRA]\label{theorem:conv2}
	\an{Let $\Omega_i=\R^p$ for all $i$, and let Assumptions~\ref{assum:all}, \ref{ass:conn_and_prop}, and~\ref{ass:asss_for_geo1} be satisfied. Also, assume that problem~\eqref{eq:F2} has a solution, i.e., 
		$\bX^*\ne\emptyset$. If the parameter $c$ is chosen such that $0<c<\frac{1}{2L\laml{\Lap}}$,
		%	Under Assumptions \ref{ass:conn_and_prop}--\ref{ass:asss_for_geo1}, if we choose the parameter as $c\in(0,\frac{1}{2L\laml{\Lap}})$ in Algorithm 2, then for some bounded positive $\tau$ the sequence
		%	\[\left\{c\|\bq^k-\bq^*\|_\Fro^2+\tau\|\df(\bx^k)-\df(\bx^{*})\|_\Lap^2\right\}\]
		%	is monotonic non-increasing; both the sequences
		%	\[\{\|\bq^k-\bq^{k+1}\|_\Fro^2\} \text{ and } \{\|\df(\bx^k)-\df(\bx^{k+1})\|_\Lap^2\}\]
		%	are infinitely summable. 
		then the sequence $\{\bx^k\}$ converges to a point in $\bX^*$.}
\end{theorem}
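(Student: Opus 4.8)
The plan is to reuse the template of the Mirror-P-EXTRA analysis (proof of Theorem~\ref{theorem:conv}); the only genuine difference is that for Mirror-EXTRA the natural ``metric'' $\met=\mathrm{diag}(cI,-c\Lap)$ is indefinite (the gradient block carries the wrong sign), so the bare monotone inequality does not telescope. The remedy is to spend the Lipschitz-gradient property of $\f$ to absorb the negative curvature, and this is exactly what Lemma~\ref{lemma:contraction_lemma} organizes; the stepsize bound $0<c<\tfrac{1}{2L\laml{\Lap}}$ will turn out to be precisely the hypothesis $a>3b$ of Remark~\ref{remark:par}.

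First I would work with the equivalent description of Algorithm~2 from Lemma~\ref{lemma:rec_rel_alg2}. Since $\Omega_i=\R^p$ we have $\bh=\f$, so Lemma~\ref{lem:opc-res} (with the algorithm's $c>0$) attaches to any $\bx^*\in\bX^*$ a matrix $\bq^*$ with $\bx^*=\br-cU^\T\bq^*$ and $U\df(\bx^*)=\zero$; since $\nul{U}=\nul{\Lap}$, the latter forces $\Lap^{1/2}\df(\bx^*)=\zero$, where $\Lap^{1/2}$ denotes the PSD square root. Set $\bu^k\triangleq\Lap^{1/2}\df(\bx^k)$, so $\bu^*\triangleq\Lap^{1/2}\df(\bx^*)=\zero$ and, by \eqref{eq:up2_line2}, $\|\bq^{k+1}-\bq^k\|_\Fro^2=\|U\df(\bx^{k+1})\|_\Fro^2=\|\bu^{k+1}\|_\Fro^2$.

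Next comes the key inequality. Starting from co-coercivity, $\tfrac1L\|\df(\bx^{k+1})-\df(\bx^*)\|_\Fro^2\le\langle\df(\bx^{k+1})-\df(\bx^*),\bx^{k+1}-\bx^*\rangle$ (a standard consequence of convexity and $L$-smoothness of $\f$), I would substitute $\bx^{k+1}-\bx^*$ from \eqref{eq:up2_line1}, use $U\df(\bx^{k+1})=\bq^{k+1}-\bq^k$ and $U\df(\bx^*)=\zero$, and expand with the two three-point identities. Using $\|\bq^{k+1}-\bq^k\|_\Fro^2=\|\bu^{k+1}\|_\Fro^2$ and $\|\df(\bx^{k+1})-\df(\bx^*)\|_\Fro^2\ge\laml{\Lap}^{-1}\|\bu^{k+1}\|_\Fro^2$, this collapses to
\[
\Big(\tfrac{2}{cL\laml{\Lap}}-1\Big)\|\bu^{k+1}\|_\Fro^2+\|\bq^{k+1}-\bq^*\|_\Fro^2+\|\bu^k-\bu^*\|_\Fro^2\le\|\bq^k-\bq^*\|_\Fro^2+\|\bu^{k+1}-\bu^k\|_\Fro^2,
\]
which is exactly hypothesis~\eqref{eq:con_l2_1} of Lemma~\ref{lemma:contraction_lemma} with $\bu^k\!\leftarrow\!\bq^k$, $\bv^k\!\leftarrow\!\bu^k$, $\bu^*\!\leftarrow\!\bq^*$, $\bv^*\!\leftarrow\!\zero$, $b=1$ and $a=\tfrac{2}{cL\laml{\Lap}}-1$; the stepsize bound $c<\tfrac{1}{2L\laml{\Lap}}$ is equivalent to $a>3b$, so by Remark~\ref{remark:par} admissible $\rho,t$ exist. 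Lemma~\ref{lemma:contraction_lemma} then gives (a) the Lyapunov quantity $V_k(\bx^*)\triangleq\|\bq^k-\bq^*\|_\Fro^2+at\,\|\Lap^{1/2}\df(\bx^k)\|_\Fro^2$ is non-increasing, hence $\{\bq^k\}$ and $\{\Lap^{1/2}\df(\bx^k)\}$ are bounded and, via \eqref{eq:up2_line1}, so is $\{\bx^k\}$; and (b) $\sum_k\|\bq^{k+1}-\bq^k\|_\Fro^2<\infty$ and $\sum_k\|\Lap^{1/2}(\df(\bx^{k+1})-\df(\bx^k))\|_\Fro^2<\infty$, so $\bq^{k+1}-\bq^k\to\zero$ (equivalently $U\df(\bx^k)\to\zero$ and $\Lap^{1/2}\df(\bx^k)\to\zero$) and $\Lap(\df(\bx^{k+1})-\df(\bx^k))\to\zero$; subtracting \eqref{eq:up2_line1} at consecutive indices then gives $\bx^{k+1}-\bx^k\to\zero$. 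Finally I would pass to a convergent subsequence $(\bx^{k_\ell},\bq^{k_\ell})\to(\tilde\bx,\tilde\bq)$: continuity of $\df$ (Assumption~\ref{ass:asss_for_geo1}) turns $U\df(\bx^k)\to\zero$ into $U\df(\tilde\bx)=\zero$, and the limit of \eqref{eq:up2_line0} along $k_\ell$ gives $\tilde\bx-\br+cU^\T\tilde\bq=\zero$, so by Lemma~\ref{lem:opc-res} $\tilde\bx\in\bX^*$; applying the monotonicity of $V_k(\cdot)$ with this $\tilde\bx$ and its associated $\tilde\bq$ (legitimate since $\tilde\bx\in\bX^*$), $V_{k_\ell}(\tilde\bx)\to0$ forces $V_k(\tilde\bx)\to0$, hence $\bq^k\to\tilde\bq$, and then \eqref{eq:up2_line1} yields $\bx^k\to\tilde\bx$.

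The step I expect to be the main obstacle is the derivation of the boxed inequality in the exact form of~\eqref{eq:con_l2_1}: one must use co-coercivity rather than mere convexity (to produce the coefficient $\tfrac{2}{cL\laml{\Lap}}-1$), recognize the identity $\|\bq^{k+1}-\bq^k\|_\Fro^2=\|\Lap^{1/2}\df(\bx^{k+1})\|_\Fro^2$ so that the indefinite gradient block of $\met$ and the ``$-b\|\bu^k-\bu^{k+1}\|_\Fro^2$'' bookkeeping in Lemma~\ref{lemma:contraction_lemma} line up, and then verify that the algorithm's stepsize window is nothing but the condition $a>3b$ that makes Lemma~\ref{lemma:contraction_lemma} applicable. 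Everything afterwards --- boundedness, vanishing successive differences, subsequential identification, and the bootstrap to full-sequence convergence --- follows the pattern already used in the proof of Theorem~\ref{theorem:conv}.
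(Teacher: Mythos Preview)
Your approach is essentially the same as the paper's: start from co-coercivity (Baillon--Haddad), substitute \eqref{eq:up2_line1}, expand with the three-point identities to get a relation of the form \eqref{eq:con_l2_1}, observe that the stepsize condition $c<\tfrac{1}{2L\laml{\Lap}}$ is exactly $a>3b$ of Remark~\ref{remark:par}, apply Lemma~\ref{lemma:contraction_lemma}, and then finish with the subsequence/bootstrap argument from the proof of Theorem~\ref{theorem:conv}. The only cosmetic difference is that the paper keeps the factor $c$ (so $b=c$, $a=\tfrac{2}{L\laml{\Lap}}-c$) and writes $\bv^k=U\df(\bx^k)$ rather than $\Lap^{1/2}\df(\bx^k)$; these are the same after rescaling. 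One small slip: your displayed inequality drops the $-\|\bq^k-\bq^{k+1}\|_\Fro^2$ term that the three-point identity actually produces on the right-hand side, and that term is needed for \eqref{eq:con_l2_1} to match verbatim (the paper's \eqref{eq:conv2_p3} retains it); your final paragraph shows you are aware of it, so just keep it in the display.
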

\begin{proof}
	By the convexity and the gradient Lipschitz continuity of $\f$, we have \ws{for any $\bx^*\in\bX^*$,}
	\begin{equation}\label{eq:conv2_p1}
		\begin{array}{rcl}
			\frac{2}{\tL}\|\df(\bx^{k+1})-\df(\bx^*)\|_\Lap^2\leq\frac{2}{L}\|\df(\bx^{k+1})-\df(\bx^*)\|_\Fro^2\leq 2\langle\df(\bx^{k+1})-\df(\bx^*),\bx^{k+1}-\bx^*\rangle,
		\end{array}
	\end{equation}
	where $\tL=L\laml{\Lap}$. 
	\an{From Lemma~\ref{lemma:rec_rel_alg2} (cf.~\eqref{eq:up2_line1}) 
		we have an expression for $\bx^{k+1}-\bx^*$, which when
		substituted in~\eqref{eq:conv2_p1} yields}
	\begin{equation}\label{eq:conv2_p2}
		\begin{array}{rcl}
			&    &\frac{2}{\tL}\|\df(\bx^{k+1})-\df(\bx^*)\|_\Lap^2\\
			&\leq&2\langle\df(\bx^{k+1})-\df(\bx^*),cU^\T(\bq^*-\bq^{k+1})+c\Lap(\df(\bx^{k+1})-\df(\bx^{k}))\rangle\\
			&  = &2\langle\bq^{k+1}-\bq^k,c(\bq^*-\bq^{k+1})\rangle-2\langle\df(\bx^{k+1})-\df(\bx^*),c\Lap(\df(\bx^{k})-\df(\bx^{k+1}))\rangle\\
			&  = &c\|\bq^k-\bq^*\|_\Fro^2-c\|\bq^{k+1}-\bq^*\|_\Fro^2-c\|\bq^k-\bq^{k+1}\|_\Fro^2\\
			&   &-c\|\df(\bx^k)-\df(\bx^*)\|_\Lap^2+c\|\df(\bx^{k+1})-\df(\bx^*)\|_\Lap^2+c\|\df(\bx^k)-\df(\bx^{k+1})\|_\Lap^2,
		\end{array}
	\end{equation}
	\an{where the first equality follows from relation~\eqref{eq:up2_line2} of Lemma~\ref{lemma:rec_rel_alg2}
		and the optimality condition $U\df(\bx^*)=0$ 
		(cf.~Lemma~\ref{lem:opc-res} with $\widetilde\nabla \bh(\bx^*)=\nabla\f(\bx^*)$).
		Therefore,
		\begin{equation}\label{eq:conv2_p3}
			\begin{array}{rcl}
				&    &c\|\bq^{k+1}-\bq^*\|_\Fro^2+\left(\frac{2}{\tL}-c\right)\|\df(\bx^{k+1})-\df(\bx^*)\|_\Lap^2\\
				&\leq &c\|\bq^k-\bq^*\|_\Fro^2 -c\|\bq^k-\bq^{k+1}\|_\Fro^2-c\|\df(\bx^k)-\df(\bx^*)\|_\Lap^2+c\|\df(\bx^k)-\df(\bx^{k+1})\|_\Lap^2.
			\end{array}
		\end{equation}		
	}
	
	\an{We now apply Lemma \ref{lemma:contraction_lemma} with the following identification:
		in \eqref{eq:conv2_p3}, we set  $\{\bu^k,\bv^k\}\triangleq\{\bq^k,U\df(\bx^k)\}$, the point $\{\bu^*,\bv^*\}\triangleq\{\bq^*,U\df(\bx^*)\}$ (note that $\Lap=U^\T U$), and the quantities $b=c$, $a=\frac{2}{L'}-c$ and $\rho=\frac{a-b}{2b}$. 
		The condition $c\in\left(0,\frac{1}{2\tL}\right)$ is equivalent to $a>3b>0$ (see Remark~\ref{remark:par}).
		Thus, by
		applying  Lemma~\ref{lemma:contraction_lemma},
		we obtain that for any $c\in\left(0,\frac{1}{2\tL}\right)$ and 
		$t\in\left[\frac{c\tL}{2-c\tL},\frac{1-c\tL}{2-c\tL}\right)$, 
		\begin{equation}\nonumber\label{eq:conv2_p4_0}
			\text{the sequence }\left\{c\|\bq^k-\bq^*\|_\Fro^2+at\|\df(\bx^k)-\df(\bx^{*})\|_\Lap^2\right\}\text{ is monotonically non-increasing, and}
		\end{equation}
		\begin{equation}\label{eq:conv2_p4}
			\sum_{k=0}^\infty \left\{c\|\bq^k-\bq^{k+1}\|_\Fro^2+\left((2-c\tL)(1-t)c-c\right)\|\df(\bx^k)-\df(\bx^{k+1})\|_\Lap^2\right\}<\infty.
		\end{equation}
		This immediately implies the summability of the sequences 
		$\{\|\bq^k-\bq^{k+1}\|_\Fro^2\}$ and $\{\|\df(\bx^k)-\df(\bx^{k+1})\|_\Lap^2\}$. 
		In view of relations~\eqref{eq:up2_line2} and~\eqref{eq:up2_line0} of Lemma~\ref{lemma:rec_rel_alg2}, respectively,
		we have 
		\[\|\bq^k-\bq^{k+1}\|_\Fro^2=\|U\df(\bx^{k+1})\|_\Fro^2,\]
		\[\|\bx^{k+1}-\br+cU^\T\bq^{k+1}\|_\Fro^2=\|c\Lap(\df(\bx^k)-\df(\bx^{k+1}))\|_\Fro^2\leq c^2\laml{\Lap}\|\df(\bx^k)-\df(\bx^{k+1})\|_\Lap^2.\]
		Choosing $t=\frac{1}{2(2-c\tL)}$ in \eqref{eq:conv2_p4}, and using  the preceding two relations, 
		we obtain
		\[\sum_{k=0}^\infty\{c\|U\df(\bx^{k+1})\|_\Fro^2
		+\frac{0.5-cL\laml{\Lap}}{c\laml{\Lap}}\|\bx^{k+1}-\br+cU^\T\bq^{k+1}\|_\Fro^2\}<\infty.\]
		By following a nearly identical line of analysis as in the proof of Theorem~\ref{theorem:conv},
		from relations~\eqref{eq:conv_p4_line1} and~\eqref{eq:conv_p4_line2} onward, we can conclude that 
		$\{\bx^k\}$ converges to a point in the optimal set $\bX^*$.}
\end{proof}

\begin{remark}[Step size selection]\label{remark:step}
	The bound $c\leq\frac{1}{2L\laml{\Lap}}$ does not \an{necessarily imply that the step size $c$ selection
		requires any knowledge of the graph $\G$ structure. For example, 
		such a requirement can be avoided by using $\Lap=0.5(I-W)$,} 
	where $W$ is a symmetric stochastic matrix, in which case one may employ a step size 
	$c\leq\frac{1}{2L}$.
\end{remark}

\an{We next investigate convergence rate properties of Mirror-EXTRA,
	where we make use of the following result which is based on Lemma~\ref{lemma:rec_rel_alg2}.}

\begin{lemma}[Monotonic successive difference of Mirror-EXTRA]\label{lemma:mono2}
	\an{Let $\Omega_i=\R^p$ for all $i$, and let Assumptions~\ref{assum:all}, \ref{ass:conn_and_prop}, and~\ref{ass:asss_for_geo1} be satisfied. Then, for any $c\in\left(0,\frac{1}{2L\laml{\Lap}}\right)$ we have}
	$$
	c\|\D\bq^{k+1}\|_\Fro^2+(\frac{2}{L\laml{\Lap}}-3c)\|\D\df(\bx^{k+1})\|_\Lap^2\leq c\|\D\bq^k\|_\Fro^2+(\frac{2}{L\laml{\Lap}}-3c)\|\D\df(\bx^k)\|_\Lap^2,
	$$
	where $\D\bq^{k+1}\triangleq\bq^k-\bq^{k+1}$ and $\D\df(\bx^{k+1})\triangleq\D\df(\bx^k)-\D\df(\bx^{k+1})$.
\end{lemma}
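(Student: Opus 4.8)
The plan is to mirror the argument of Lemma~\ref{lemma:mono}, using the recursion of Mirror-EXTRA from Lemma~\ref{lemma:rec_rel_alg2} in place of the one for Mirror-P-EXTRA, and then to run a monotonicity argument of the same flavor as in Lemma~\ref{lemma:contraction_lemma}. First I would form the first-order difference of relation~\eqref{eq:up2_line0} between iteration $k$ and iteration $k+1$; writing $\D\bx^{k+1}\triangleq\bx^k-\bx^{k+1}$, $\D\bq^{k+1}\triangleq\bq^k-\bq^{k+1}$, and $\D\df(\bx^{k+1})\triangleq\df(\bx^k)-\df(\bx^{k+1})$, this gives
\[
\D\bx^{k+1}+cU^\T\D\bq^{k+1}-c\Lap\bigl(\D\df(\bx^{k+1})-\D\df(\bx^{k})\bigr)=\zero .
\]
Next, since $\f$ is convex with $L$-Lipschitz gradient, the co-coercivity inequality gives $\frac{2}{L}\|\D\df(\bx^{k+1})\|_\Fro^2\le\langle\D\df(\bx^{k+1}),\D\bx^{k+1}\rangle$, hence also $\frac{2}{L\laml{\Lap}}\|\D\df(\bx^{k+1})\|_\Lap^2\le\langle\D\df(\bx^{k+1}),\D\bx^{k+1}\rangle$ after using $\|\cdot\|_\Lap^2\le\laml{\Lap}\|\cdot\|_\Fro^2$ on the left. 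Substituting the expression for $\D\bx^{k+1}$ from the differenced recursion into the right-hand side, and using $\D\bq^{k}-\D\bq^{k+1}=-U\D\df(\bx^{k+1})$ (which follows by differencing~\eqref{eq:up2_line2}) so that $\langle U\D\df(\bx^{k+1}),\D\bq^{k+1}\rangle=\langle\D\bq^{k}-\D\bq^{k+1},-\D\bq^{k+1}\rangle$, I obtain an inequality of the form
\[
\tfrac{2}{L\laml{\Lap}}\|\D\df(\bx^{k+1})\|_\Lap^2
\le c\langle\D\bq^{k+1},\D\bq^{k}-\D\bq^{k+1}\rangle
+c\langle\D\df(\bx^{k+1}),\Lap(\D\df(\bx^{k})-\D\df(\bx^{k+1}))\rangle .
\]

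Then I would apply the polarization identities $2\langle\D\bq^{k+1},\D\bq^{k}-\D\bq^{k+1}\rangle=\|\D\bq^{k}\|_\Fro^2-\|\D\bq^{k+1}\|_\Fro^2-\|\D\bq^{k}-\D\bq^{k+1}\|_\Fro^2$ and the analogous one in the $\Lap$-seminorm for the $\D\df$ terms. This turns the displayed inequality into
\[
c\|\D\bq^{k+1}\|_\Fro^2+\bigl(\tfrac{2}{L\laml{\Lap}}-c\bigr)\|\D\df(\bx^{k+1})\|_\Lap^2
\le c\|\D\bq^{k}\|_\Fro^2-c\|\D\bq^k-\D\bq^{k+1}\|_\Fro^2
+c\|\D\df(\bx^k)-\D\df(\bx^{k+1})\|_\Lap^2 ,
\]
after dropping a nonnegative $c\|\D\df(\bx^k)-\D\df(\bx^{k+1})\|_\Lap^2$–type term on the correct side (this step parallels the passage from~\eqref{eq:conv2_p2} to~\eqref{eq:conv2_p3}). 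At this point the structure is exactly the hypothesis of Lemma~\ref{lemma:contraction_lemma}, with $\bu^k=\bq^k$, $\bv^k=\df(\bx^k)$ (measured in $\|\cdot\|_\Lap$, since $\Lap=U^\T U$), $b=c$, and $a=\tfrac{2}{L\laml{\Lap}}-c$. The hypothesis $c\in(0,\tfrac{1}{2L\laml{\Lap}})$ is precisely $a>3b>0$, so Remark~\ref{remark:par} applies; choosing $\rho=\tfrac{a-b}{2b}$ and the endpoint value $t=\tfrac{b}{a}$ yields $at=b$ and $\tfrac{a(1-t)}{\rho}-b = 2b - 2b^2/a - b$, and after collecting terms the monotone sequence $\{b\|\bu^k-\bu^*\|^2+at\|\bv^k-\bv^*\|^2\}$ becomes, up to the shift by the fixed point, the sequence $\{c\|\D\bq^{k+1}\|_\Fro^2+(\tfrac{2}{L\laml{\Lap}}-3c)\|\D\df(\bx^{k+1})\|_\Lap^2\}$; a direct bookkeeping of the coefficient in front of $\|\bv^k-\bv^{k+1}\|^2$ produces the stated $\tfrac{2}{L\laml{\Lap}}-3c$.

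The main obstacle I anticipate is the coefficient bookkeeping rather than any conceptual difficulty: one has to be careful that co-coercivity is invoked with the correct constant after passing between $\|\cdot\|_\Fro$ and $\|\cdot\|_\Lap$ (the $\laml{\Lap}$ factor lands on the co-coercivity side, not inside the cross term), and that the two polarization identities are applied to the right pairs so that the ``$-c\|\D\bq^k-\D\bq^{k+1}\|_\Fro^2$'' term and the surplus $\|\D\df(\bx^k)-\D\df(\bx^{k+1})\|_\Lap^2$ term end up with signs that let Lemma~\ref{lemma:contraction_lemma} close with the specific constant $\tfrac{2}{L\laml{\Lap}}-3c$. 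An alternative, if the appeal to Lemma~\ref{lemma:contraction_lemma} feels indirect, is to redo that lemma's short argument inline with the present $b,a,t,\rho$: add $\tfrac{a(1-t)}{1+\rho}\|\D\df(\bx^k)-\D\df(\bx^{k+1})\|_\Lap^2 \le \tfrac{a(1-t)}{\rho}\|\D\df(\bx^k)\|_\Lap^2 + a(1-t)\|\D\df(\bx^{k+1})\|_\Lap^2$ (the matrix-norm version of~\eqref{eq:rel01}) to the inequality above and simplify; either route gives the claim, and I would present the version that reuses Lemma~\ref{lemma:contraction_lemma} verbatim to keep the proof short.
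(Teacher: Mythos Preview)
Your skeleton up through the polarization step is essentially the paper's: difference the recursion~\eqref{eq:up2_line0}, invoke co-coercivity (note: the Baillon--Haddad constant is $1/L$, not $2/L$; the factor $2$ in $2/\tL$ appears only after doubling the inner-product inequality, so your first displayed bound and the one right after it have the wrong constants as written), use $\D\bq^k-\D\bq^{k+1}=-U\D\df(\bx^{k+1})$, and expand via the two polarization identities. Also watch the sign: substituting $\D\bx^{k+1}$ gives the cross term $c\langle\D\df(\bx^{k+1}),\Lap(\D\df(\bx^{k+1})-\D\df(\bx^{k}))\rangle$, not with $\D\df(\bx^{k})-\D\df(\bx^{k+1})$.

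Where your proposal genuinely diverges from the paper, and does not close as written, is the finish. The paper does \emph{not} route through Lemma~\ref{lemma:contraction_lemma} here. After polarization it has
\[
c\|\D\bq^{k+1}\|_\Fro^2+\Bigl(\tfrac{2}{\tL}-c\Bigr)\|\D\df(\bx^{k+1})\|_\Lap^2
\le c\|\D\bq^{k}\|_\Fro^2-c\|\D\bq^k-\D\bq^{k+1}\|_\Fro^2
-c\|\D\df(\bx^k)\|_\Lap^2+c\|\D\df(\bx^k)-\D\df(\bx^{k+1})\|_\Lap^2,
\]
then uses the crude bound $\|\D\df(\bx^k)-\D\df(\bx^{k+1})\|_\Lap^2\le 2\|\D\df(\bx^k)\|_\Lap^2+2\|\D\df(\bx^{k+1})\|_\Lap^2$, drops the nonpositive $-c\|\D\bq^k-\D\bq^{k+1}\|_\Fro^2$, and arrives at
\[
c\|\D\bq^{k+1}\|_\Fro^2+\Bigl(\tfrac{2}{\tL}-3c\Bigr)\|\D\df(\bx^{k+1})\|_\Lap^2
\le c\|\D\bq^{k}\|_\Fro^2+c\|\D\df(\bx^k)\|_\Lap^2,
\]
whereupon the hypothesis $c<\tfrac{1}{2\tL}$, i.e.\ $c\le \tfrac{2}{\tL}-3c$, finishes the claim in one line. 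Your route instead drops the $-c\|\D\df(\bx^k)\|_\Lap^2$ term (which then no longer matches the hypothesis~\eqref{eq:con_l2_1} of Lemma~\ref{lemma:contraction_lemma}) and applies that lemma with the parameters of Remark~\ref{remark:par}. With $t=b/a$ you get $at=b=c$, so the monotone combination has coefficient $c$ on $\|\D\df\|_\Lap^2$, not $\tfrac{2}{\tL}-3c$; the ``direct bookkeeping'' you allude to does not produce the stated constant with those choices. (If you insist on using Lemma~\ref{lemma:contraction_lemma}, keep the $-c\|\D\df(\bx^k)\|_\Lap^2$ term and take $t=(a-2b)/a$ with any $\rho\in[\,2b/(a-b),\,1)$; then~\eqref{eq:con_l2_4_1}--\eqref{eq:con_l2_4_2} hold exactly when $a>3b$, and $at=a-2b=\tfrac{2}{\tL}-3c$.) Either way, the paper's two-line finish via the triangle-type bound is simpler and avoids the parameter search.
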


\begin{proof}
	To simplify the notation, we also define 
	$\D\bx^{k+1}\triangleq\bx^k-\bx^{k+1}$. 
	By the convexity and the Lipschitz continuity of $\nabla \f$, we have
	\begin{equation}\label{eq:mono2_proof_1}
		\begin{array}{rcl}
			\langle\D\bx^{k+1},\D\df(\bx^{k+1})\rangle\geq\frac{1}{L}\|\D\df(\bx^{k+1})\|_\Fro^2\geq\frac{1}{\tL}\|\D\df(\bx^{k+1})\|_\Lap^2,
		\end{array}
	\end{equation}
	where $\tL=L\laml{\Lap}$. Now we use relation \eqref{eq:up2_line0} of Lemma~\ref{lemma:rec_rel_alg2};
	specifically, by taking the difference between \eqref{eq:up2_line0} at the $k$-th and
	$(k+1)$-th iteration we obtain
	\begin{equation}\label{eq:mono2_proof_2}
		\begin{array}{c}
			\D\bx^{k+1}+cU^\T\D\bq^{k+1}-c\Lap(\D\df(\bx^{k+1})-\D\df(\bx^k))=\zero.
		\end{array}
	\end{equation}
	\an{From~\eqref{eq:mono2_proof_2} we have an expression for $\D\bx^{k+1}$, which when substituted in
		relation~\eqref{eq:mono2_proof_1} yields}
	\begin{equation}\label{eq:mono2_proof_3}
		\begin{array}{rl}
			\langle-cU^\T\D\bq^{k+1}+c\Lap(\D\df(\bx^{k+1})-\D\df(\bx^k)),\D\df(\bx^{k+1})\rangle\geq\frac{1}{\tL}\|\D\df(\bx^{k+1})\|_\Lap^2.
		\end{array}
	\end{equation}
	\an{Relation \eqref{eq:up2_line2} of Lemma~\ref{lemma:rec_rel_alg2} implies 
		$\D\bq^{k+1}=-U\df(\bx^{k+1})$, which in turn gives
		\begin{equation}\label{eq:mono2_proof_4}
			\begin{array}{rcl}
				\D\bq^{k}-\D\bq^{k+1}=-U\D\df(\bx^{k+1}).
			\end{array}
		\end{equation}
		By substituting \eqref{eq:mono2_proof_4} into \eqref{eq:mono2_proof_3}, we can see that}
	\begin{equation}\label{eq:mono2_proof_5}
		\begin{array}{c}
			2c \langle \D\bq^{k+1},\D\bq^{k}-\D\bq^{k+1}\rangle
			+2c \langle \Lap\D\df(\bx^{k+1}),\D\df(\bx^{k+1})-\D\df(\bx^{k})\rangle
			\geq\frac{2}{\tL}\|\D\df(\bx^{k+1})\|_\Lap^2.
		\end{array}
	\end{equation}
	\an{Note that
		\[2\langle \D\bq^{k+1},\D\bq^{k}-\D\bq^{k+1}\rangle=\|\D\bq^{k}\|_\Fro^2 - \|\D\bq^{k+1}\|_\Fro^2 - 
		\|\D\bq^{k}-\D\bq^{k+1}\|_\Fro^2.\]
		Similarly, since $\Lap=U^\T U$, we have
		\[2 \langle \Lap\D\df(\bx^{k+1}),\D\df(\bx^{k+1})-\D\df(\bx^{k})\rangle
		=\|\D\df(\bx^{k+1})\|_\Lap^2 + \|\D\df(\bx^{k+1})-\D\df(\bx^{k})\|_\Lap^2 - \|\D\df(\bx^{k})\|_\Lap^2.\]
		By using the preceding two equalities in \eqref{eq:mono2_proof_5} and by reorganizing terms, 
		we obtain}
	\begin{equation*}%\label{eq:mono2_proof_6}
		\begin{array}{rl}
			&c\|\D\bq^{k+1}\|_\Fro^2+(\frac{2}{\tL}-c)\|\D\df(\bx^{k+1})\|_\Lap^2\\
			\leq&c\|\D\bq^k\|_\Fro^2-c\|\D\bq^k-\D\bq^{k+1}\|_\Fro^2+c\|\D\df(\bx^k)-\D\df(\bx^{k+1})\|_\Lap^2
			-c\|\D\df(\bx^k)\|_\Lap^2\\
			\leq& c\|\D\bq^k\|_\Fro^2+c\|\D\df(\bx^k)\|_\Lap^2+2c\|\D\df(\bx^{k+1})\|_\Lap^2,
		\end{array}
	\end{equation*}
	\an{where the last inequality follows from $\|\D\df(\bx^k)-\D\df(\bx^{k+1})\|_\Lap^2\le 
		2\|\D\df(\bx^k)\|_\Lap^2 +2\|\D\df(\bx^{k+1})\|_\Lap^2.$
		Therefore,
		\begin{equation*}%\label{eq:mono2_proof_7}
			\begin{array}{rcl}
				c\|\D\bq^{k+1}\|_\Fro^2+(\frac{2}{\tL}-3c)\|\D\df(\bx^{k+1})\|_\Lap^2
				&\leq&c\|\D\bq^k\|_\Fro^2+c\|\D\df(\bx^k)\|_\Lap^2\\
				&\leq&c\|\D\bq^k\|_\Fro^2+(\frac{2}{\tL}-3c)\|\D\df(\bx^k)\|_\Lap^2,
			\end{array}
		\end{equation*}
		where in the last inequality we use $c\le \frac{2}{\tL}-3c$, which holds by the assumption that 
		$c\in\left(0,\frac{1}{2\tL}\right)$.} 
\end{proof}

\an{We have the following basic rate result for the iterates of Algorithm 2, 
	when the objective function $\f$ is convex and has Lipschitz continuous gradients. 
	The result follows directly from Theorem~\ref{theorem:conv2}, Lemma~\ref{lemma:mono2}, and 
	Proposition~\ref{prop:o_1_k}.}
\begin{theorem}[Sublinear rate of Mirror-EXTRA]\label{theorem:o_1_k_2}
	\an{Under the assumptions of Theorem~\ref{theorem:conv2},
		along the iterates of Algorithm 2,}  
	the first-order optimality residual decays to $0$ at an $o\left(\frac{1}{k}\right)$ rate, i.e.,
	\[\|U\df(\bx^{k+1})\|_\Fro^2=o\left(\frac{1}{k}\right), \qquad \|\bx^{k+1}-\br+cU^\T\bq^{k+1}\|_{\Fro}^2=o\left(\frac{1}{k}\right).\]
\end{theorem}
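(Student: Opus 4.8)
The plan is to mirror the proof of Theorem~\ref{theorem:o_1_k} for Mirror-P-EXTRA, now building on the convergence machinery of Mirror-EXTRA instead. The key quantity to track is the monotone, summable scalar sequence that was exposed in the proof of Theorem~\ref{theorem:conv2}. First I would recall that in~\eqref{eq:conv2_p4_0}--\eqref{eq:conv2_p4} the proof of Theorem~\ref{theorem:conv2} already established, via Lemma~\ref{lemma:contraction_lemma}, that the sequence
\[
a_k\triangleq c\|\bq^k-\bq^{k+1}\|_\Fro^2+\Big(\tfrac{2}{L\laml{\Lap}}-3c\Big)\|\D\df(\bx^{k+1})\|_\Lap^2
\]
is summable; Lemma~\ref{lemma:mono2} gives exactly that this same $a_k$ is monotonically non-increasing. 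So the sequence $\{a_k\}$ is nonnegative, non-increasing, and $\sum_k a_k<\infty$. Applying Proposition~\ref{prop:o_1_k} to $\{a_k\}$ yields $a_k=o(1/k)$, and since $c>0$ and $\tfrac{2}{L\laml{\Lap}}-3c>0$ (which holds because $c\in(0,\tfrac{1}{2L\laml{\Lap}})$, cf.\ the last line of the proof of Lemma~\ref{lemma:mono2}), both nonnegative terms in $a_k$ individually are $o(1/k)$; in particular $\|\bq^k-\bq^{k+1}\|_\Fro^2=o(1/k)$ and $\|\D\df(\bx^{k+1})\|_\Lap^2=o(1/k)$.

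Second, I would translate these two $o(1/k)$ statements into the two claimed residuals using the recursive relations of Lemma~\ref{lemma:rec_rel_alg2}. From~\eqref{eq:up2_line2} we have $\bq^{k+1}-\bq^k=U\df(\bx^{k+1})$, so $\|U\df(\bx^{k+1})\|_\Fro^2=\|\bq^k-\bq^{k+1}\|_\Fro^2=o(1/k)$, which is the first claim. For the second, from~\eqref{eq:up2_line0} we have $\bx^{k+1}-\br+cU^\T\bq^{k+1}=c\Lap(\df(\bx^{k+1})-\df(\bx^k))$, hence
\[
\|\bx^{k+1}-\br+cU^\T\bq^{k+1}\|_\Fro^2=c^2\|\Lap\,\D\df(\bx^{k+1})\|_\Fro^2\le c^2\laml{\Lap}\,\|\D\df(\bx^{k+1})\|_\Lap^2=o(1/k),
\]
where the inequality uses $\Lap=U^\T U\succcurlyeq 0$ and the bound $\|\Lap M\|_\Fro^2\le\laml{\Lap}\|M\|_\Lap^2$ (the same estimate already invoked inside the proof of Theorem~\ref{theorem:conv2}). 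This gives the second claim.

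There is essentially no serious obstacle: the heavy lifting — summability from Lemma~\ref{lemma:contraction_lemma} and monotonicity from Lemma~\ref{lemma:mono2} — is already in hand, so the only thing to be careful about is bookkeeping, namely matching the coefficient $\tfrac{2}{L\laml{\Lap}}-3c$ in Lemma~\ref{lemma:mono2} against the summand coefficient $(2-c\tL)(1-t)c-c$ appearing in~\eqref{eq:conv2_p4} (choosing the admissible $t$, e.g.\ $t=\tfrac12$ as in Remark~\ref{remark:par}, so that this coefficient is a strictly positive multiple of $\|\D\df(\bx^k)-\D\df(\bx^{k+1})\|_\Lap^2$, and then reconciling it with the per-iterate quantity that is actually monotone). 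The cleanest route is to not re-derive this and instead state directly that the scalar sequence $a_k$ above is simultaneously non-increasing (Lemma~\ref{lemma:mono2}) and summable (proof of Theorem~\ref{theorem:conv2}), then apply Proposition~\ref{prop:o_1_k}. The proof is then two or three lines.

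\begin{proof}
	By Lemma~\ref{lemma:mono2}, the nonnegative sequence
	$a_k\triangleq c\|\bq^k-\bq^{k+1}\|_\Fro^2+\left(\frac{2}{L\laml{\Lap}}-3c\right)\|\df(\bx^k)-\df(\bx^{k+1})\|_\Lap^2$
	is monotonically non-increasing; moreover, as shown in the proof of Theorem~\ref{theorem:conv2}
	(see~\eqref{eq:conv2_p4} with $t=\frac12$), it is summable, $\sum_{k=0}^\infty a_k<\infty$.
	By Proposition~\ref{prop:o_1_k}, $a_k=o\left(\frac1k\right)$. Since $c>0$ and, by the assumption
	$c\in\left(0,\frac{1}{2L\laml{\Lap}}\right)$, also $\frac{2}{L\laml{\Lap}}-3c>0$, each of the two
	nonnegative terms in $a_k$ is individually $o\left(\frac1k\right)$; in particular,
	\[\|\bq^k-\bq^{k+1}\|_\Fro^2=o\left(\tfrac1k\right),\qquad
	\|\df(\bx^k)-\df(\bx^{k+1})\|_\Lap^2=o\left(\tfrac1k\right).\]
	By relation~\eqref{eq:up2_line2} of Lemma~\ref{lemma:rec_rel_alg2},
	$\bq^{k+1}-\bq^k=U\df(\bx^{k+1})$, hence $\|U\df(\bx^{k+1})\|_\Fro^2=\|\bq^k-\bq^{k+1}\|_\Fro^2=o\left(\frac1k\right)$.
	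By relation~\eqref{eq:up2_line0} of Lemma~\ref{lemma:rec_rel_alg2},
	$\bx^{k+1}-\br+cU^\T\bq^{k+1}=c\Lap(\df(\bx^{k+1})-\df(\bx^k))$, so using $\Lap=U^\T U\succcurlyeq\zero$,
	\[\|\bx^{k+1}-\br+cU^\T\bq^{k+1}\|_\Fro^2=c^2\|\Lap(\df(\bx^{k+1})-\df(\bx^k))\|_\Fro^2
	\le c^2\laml{\Lap}\,\|\df(\bx^{k+1})-\df(\bx^k)\|_\Lap^2=o\left(\tfrac1k\right),\]
	which completes the proof.
\end{proof}
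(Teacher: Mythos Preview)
Your approach is correct and is exactly the one the paper sketches (it says the result ``follows directly from Theorem~\ref{theorem:conv2}, Lemma~\ref{lemma:mono2}, and Proposition~\ref{prop:o_1_k}''): combine the monotonicity of $a_k$ from Lemma~\ref{lemma:mono2} with the summability coming out of the proof of Theorem~\ref{theorem:conv2}, apply Proposition~\ref{prop:o_1_k}, and then translate via Lemma~\ref{lemma:rec_rel_alg2}.

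One bookkeeping slip to fix: the choice $t=\tfrac12$ is \emph{not} admissible. The range in \eqref{eq:conv2_p4} is $t\in\bigl[\tfrac{c\tL}{2-c\tL},\tfrac{1-c\tL}{2-c\tL}\bigr)$ with $\tL=L\laml{\Lap}$, and since $c\tL\in(0,\tfrac12)$ the upper endpoint is strictly below $\tfrac12$; indeed, plugging $t=\tfrac12$ into the coefficient $(2-c\tL)(1-t)c-c$ gives $-\tfrac{c^2\tL}{2}<0$, so \eqref{eq:conv2_p4} with $t=\tfrac12$ does not yield summability of your $a_k$. The cleanest repair is to drop the specific $t$ altogether and cite what the proof of Theorem~\ref{theorem:conv2} actually states right after \eqref{eq:conv2_p4}: both $\{\|\bq^k-\bq^{k+1}\|_\Fro^2\}$ and $\{\|\df(\bx^k)-\df(\bx^{k+1})\|_\Lap^2\}$ are separately summable (there $t=\tfrac{1}{2(2-c\tL)}$ is used). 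Hence their positive linear combination $a_k$ is summable, and the rest of your proof goes through verbatim.
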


We next show a linear convergence of Mirror-EXTRA under 
additional strong convexity assumption on the function $\f$. 
To simplify the analysis, {\it we will assume that $\laml{\Lap}\leq1$},which holds for example 
when $\Lap=\Laps/\laml{\Laps}$ or $\Lap=0.5(I-W)$ for some symmetric stochastic matrix $W$ compatible with the graph $\G$.

%%%%%%%%%%%%% rate
\begin{theorem}[Linear rate of Mirror-EXTRA]\label{theorem:geo2}
	\an{Let $\Omega_i=\R^p$ for all $i$, and 
		let Assumptions~\ref{ass:conn_and_prop}, \ref{ass:asss_for_geo1} and~\ref{ass:asss_for_geo2} hold.
		Furthermore, suppose that $\Lap$ is such that $\laml{\Lap}\leq1$ and the step size $c$ satisfies 
		$c\in(0,\frac{1}{2L})$. Then, the iterate sequence $\{\bx^k\}$ 
		generated by Algorithm 2 converges to the optimal solution $\bx^*$ of problem~\eqref{eq:basic}
		at an R-linear rate, i.e.,
		\[\|\bx^{k+1}-\bx^*\|_\Fro^2=O\left(\frac{1}{\left(1+\delta\right)^k}\right),\]
		where $\delta>0$ is such that for some $\gamma>0$, 
		\[ \delta\leq\min\left\{
		\frac{2\mu Lc\lams{\Lap}}{(\mu+L)(1+\gamma)},\frac{c^2L^2\lams{\Lap}}{1+\gamma+cL(2-3cL)\lams{\Lap}+2c^2L^2(1+\frac{1}{\gamma})},\frac{(1-2cL)\lams{\Lap}}{cL(1+\frac{1}{\gamma})}\right\}.
		\end{equation*}
	}
	\end{theorem}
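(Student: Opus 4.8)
The plan is to combine the two mechanisms already developed for Algorithm~2: the argument of Theorem~\ref{theorem:conv2} (which, through Lemma~\ref{lemma:contraction_lemma} and the successive-difference monotonicity of Lemma~\ref{lemma:mono2}, compensates for the fact that the ``proximal weight'' $-c\Lap$ of Algorithm~2 is not positive semidefinite) and the device of Theorem~\ref{theorem:geo} (which upgrades a sublinear estimate to a geometric one once strong convexity is available). Concretely, the aim is a nonnegative Lyapunov functional $V^k$ --- built from $c\|\bq^k-\bq^*\|_\Fro^2$, a multiple of $\|\df(\bx^k)-\df(\bx^*)\|_\Lap^2$, and, if needed, the monotone successive-difference quantity of Lemma~\ref{lemma:mono2} --- for which $V^{k+1}\le\frac1{1+\delta}V^k$.

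First I would sharpen the starting inequality~\eqref{eq:conv2_p1}: under Assumptions~\ref{ass:asss_for_geo1} and~\ref{ass:asss_for_geo2}, strong convexity together with co-coercivity of $\nabla\f$ gives
\[
\frac{2\mu L}{\mu+L}\|\bx^{k+1}-\bx^*\|_\Fro^2+\frac{2}{\mu+L}\|\df(\bx^{k+1})-\df(\bx^*)\|_\Fro^2\ \le\ 2\langle\df(\bx^{k+1})-\df(\bx^*),\bx^{k+1}-\bx^*\rangle .
\]
Then, exactly as in the passage from~\eqref{eq:conv2_p1} to~\eqref{eq:conv2_p3} --- substitute the expression for $\bx^{k+1}-\bx^*$ from~\eqref{eq:up2_line1}, use $U\df(\bx^*)=\zero$ and the $\bq$-recursion~\eqref{eq:up2_line2}, polarize, and invoke $\laml{\Lap}\le1$ to pass between $\|\cdot\|_\Fro$ and $\|\cdot\|_\Lap$ on gradient differences --- I obtain a refinement of~\eqref{eq:conv2_p3} that carries the extra nonnegative term $\frac{2\mu L}{\mu+L}\|\bx^{k+1}-\bx^*\|_\Fro^2$ on its left-hand side. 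Feeding this refined inequality through the contraction bookkeeping of Lemma~\ref{lemma:contraction_lemma} (with $\{\bu^k,\bv^k\}=\{\bq^k,U\df(\bx^k)\}$, $b=c$, and $a,\rho,t$ chosen as in the proof of Theorem~\ref{theorem:conv2}, where $c<\frac1{2L}$ and $\laml{\Lap}\le1$ are what secure the admissible regime and a strictly positive successive-difference coefficient) and keeping the extra term along, I get a relation of the form $V^k-V^{k+1}\ \ge\ \frac{2\mu L}{\mu+L}\|\bx^{k+1}-\bx^*\|_\Fro^2+(\text{nonnegative successive-difference terms})$.

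It then remains, in the style of~\eqref{eq:geo_p3}--\eqref{eq:geo_p8}, to produce $\delta>0$ with $\delta V^{k+1}\le V^k-V^{k+1}$, i.e.\ to dominate $\delta V^{k+1}$ by the right-hand side above. The $c\|\bq^{k+1}-\bq^*\|_\Fro^2$ part of $V^{k+1}$ is handled exactly as in~\eqref{eq:geo_p4-1}--\eqref{eq:geo_p5}: from~\eqref{eq:up2_line1}, $cU^\T(\bq^{k+1}-\bq^*)=-(\bx^{k+1}-\bx^*)+c\Lap(\df(\bx^{k+1})-\df(\bx^k))$, so the full row rank of $U$ gives $c^2\lams{\Lap}\|\bq^{k+1}-\bq^*\|_\Fro^2\le\|cU^\T(\bq^{k+1}-\bq^*)\|_\Fro^2$, and a Young split with a free parameter $\gamma>0$, together with $\|\Lap A\|_\Fro^2\le\laml{\Lap}\|A\|_\Lap^2$, re-expresses it through $\|\bx^{k+1}-\bx^*\|_\Fro^2$ and $\|\df(\bx^{k+1})-\df(\bx^k)\|_\Lap^2$; the $\df$-distance part of $V^{k+1}$ is bounded by a multiple of $\|\bx^{k+1}-\bx^*\|_\Fro^2$ via Lipschitz continuity (again using $\laml{\Lap}\le1$); and the successive-difference leftovers are matched against the successive-difference terms on the right, using Lemma~\ref{lemma:mono2} if an index shift is needed. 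Collecting coefficients turns ``$\delta V^{k+1}\le V^k-V^{k+1}$'' into a short list of scalar inequalities in $\delta$ and $\gamma$, each clearly solvable for $\delta$ small; their combination yields the three explicit bounds on $\delta$ in the statement, in the same way~\eqref{eq:final_delta}--\eqref{eq:final_delta2} followed from~\eqref{eq:geo_p7}--\eqref{eq:geo_p8} for Algorithm~1. Finally, $V^{k+1}\le\frac1{1+\delta}V^k$ forces $\|\bq^k-\bq^*\|_\Fro^2$, $\|\df(\bx^k)-\df(\bx^*)\|_\Lap^2$, and hence $\|\df(\bx^k)-\df(\bx^{k+1})\|_\Lap^2$, to decay like $(1+\delta)^{-k}$, whereupon~\eqref{eq:up2_line1} gives $\|\bx^{k+1}-\bx^*\|_\Fro^2=O(\|\bq^{k+1}-\bq^*\|_\Fro^2)+O(\|\df(\bx^{k+1})-\df(\bx^k)\|_\Lap^2)=O((1+\delta)^{-k})$, exactly as in~\eqref{eq:geo_p9}.

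The main obstacle I expect is precisely this last bookkeeping: fixing the Lyapunov functional $V^k$ (in particular whether the monotone quantity of Lemma~\ref{lemma:mono2} must be folded in, and with what weight), the auxiliary parameters $t$ and $\gamma$, and the apportionment of the ``budget'' on the right-hand side between the $\|\bx^{k+1}-\bx^*\|_\Fro^2$ term (which must pay for the $\bq$- and $\df$-distance pieces of $V^{k+1}$) and the successive-difference terms (which pay for the successive-difference piece), so that all the resulting scalar conditions hold simultaneously and collapse to the stated minimum over the three quantities. Everything else is a rerun of machinery already in place: Lemma~\ref{lemma:rec_rel_alg2} for the recursion, Lemmas~\ref{lemma:contraction_lemma} and~\ref{lemma:mono2} for the non-PSD proximal weight, and the proof of Theorem~\ref{theorem:geo} for converting the refined descent inequality into a geometric rate.
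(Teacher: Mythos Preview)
Your plan is correct in outline and uses the same ingredients as the paper --- the strengthened co-coercivity inequality, the substitution from~\eqref{eq:up2_line1}, the Young split with parameter $\gamma$ to bound $\|\bq^{k+1}-\bq^*\|_\Fro^2$, the Lipschitz bound to control the $\nabla\f$-distance, and finally~\eqref{eq:up2_line1} again to read off the R-linear rate for $\bx^k$. The difference is in how the Lyapunov is built and how the successive-difference term is dispatched.

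The paper does \emph{not} route the argument through Lemma~\ref{lemma:contraction_lemma} or Lemma~\ref{lemma:mono2}. Instead, after reaching the analogue of~\eqref{eq:conv2_p3} with the extra $\frac{2\mu L}{\mu+L}\|\bx^{k+1}-\bx^*\|_\Fro^2$ term, it simply adds $(\tfrac{2}{L}-2c)\big(\|\df(\bx^k)-\df(\bx^*)\|_\Lap^2-\|\df(\bx^{k+1})-\df(\bx^*)\|_\Lap^2\big)$ to both sides. This one algebraic move fixes the Lyapunov as $V^k=c\|\bq^k-\bq^*\|_\Fro^2+(\tfrac{2}{L}-3c)\|\df(\bx^k)-\df(\bx^*)\|_\Lap^2$ directly, with no auxiliary parameters $t,\rho$. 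Likewise, the unwanted successive-difference term $c\|\df(\bx^k)-\df(\bx^{k+1})\|_\Lap^2$ is handled not via Lemma~\ref{lemma:mono2} but by the crude split $\|\df(\bx^k)-\df(\bx^{k+1})\|_\Lap^2\le 2\|\df(\bx^k)-\df(\bx^*)\|_\Lap^2+2\|\df(\bx^{k+1})-\df(\bx^*)\|_\Lap^2$, after which everything is in terms of distances to $\bx^*$.

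The upshot: your route would work, but carrying the $t,\rho$ machinery of Lemma~\ref{lemma:contraction_lemma} through the bookkeeping will not land you cleanly on the three specific bounds stated for $\delta$ --- those constants come precisely from the paper's fixed weight $\tfrac{2}{L}-3c$ and the triangle-inequality split, which together produce exactly the three scalar conditions~\eqref{eq:geo2_p7}. If you want the statement's explicit formula rather than just ``some $\delta>0$'', replace the Lemma~\ref{lemma:contraction_lemma}/\ref{lemma:mono2} detour with the two direct moves above; the rest of your proof is then identical to the paper's.
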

	\begin{proof}
	%	Since we have shown the monotonicity of $c\|\D\bq^{k+1}\|_\Fro^2+(\frac{2}{L}-3c)\|\D\df(\bx^{k+1})\|_\Lap^2$, here we choose to prove the linear convergence in the sense that there exits a $\delta>0$ such that $$c\|\bq^{k+1}-\bq^*\|_\Fro^2+(\frac{2}{L}-3c)\|\df(\bx^{k+1})-\df(\bx^*)\|_\Lap^2\leq\frac{1}{1+\delta}\left(c\|\bq^{k}-\bq^*\|_\Fro^2+(\frac{2}{L}-3c)\|\df(\bx^{k})-\df(\bx^*)\|_\Lap^2\right).$$	
	\an{By the strong convexity and the gradient Lipschitz continuity of the function $\f$, and the assumption that 
		$\laml{\Lap}\leq1$,}
	we have
	\begin{equation}\label{eq:geo2_p1}
	\begin{array}{rcl}
	\frac{2\mu L}{\mu+L}\|\bx^{k+1}-\bx^*\|_\Fro^2+\frac{2}{\mu+L}\|\df(\bx^{k+1})-\df(\bx^*)\|_\Lap^2&\leq&2\langle\df(\bx^{k+1})-\df(\bx^*),\bx^{k+1}-\bx^*\rangle.
	\end{array}
	\end{equation}	
	\an{
		From relation \eqref{eq:up2_line1} of Lemma~\ref{lemma:rec_rel_alg2} it follows that
		\begin{align}\label{eq:aux0}
		\bx^{k+1}-\bx^*=cU^\T(\bq^*-\bq^{k+1})+c\Lap(\df(\bx^{k+1})-\df(\bx^k)),\end{align}
		which when substituted in~\eqref{eq:geo2_p1} yields
		\begin{equation}\label{eq:geo2_p1-1}
		\begin{array}{rcl}
		\frac{2\mu L}{\mu+L}\|\bx^{k+1}-\bx^*\|_\Fro^2
		+\frac{2}{\mu+L}\|\df(\bx^{k+1})-\df(\bx^*)\|_\Lap^2
		&\leq&2c\langle U(\df(\bx^{k+1})-\df(\bx^*)), \bq^*-\bq^{k+1}\rangle\cr
		&&+2c \langle\df(\bx^{k+1})-\df(\bx^*),\Lap(\df(\bx^{k+1})-\df(\bx^k))\rangle.
		\end{array}
		\end{equation}	
		Since $\Lap=U^\T U$, we have
		\begin{align}\label{eq:aux1}
		2 \langle \df(\bx^{k+1})-\df(\bx^*),\Lap(\df(\bx^{k+1})-\df(\bx^{k}))\rangle
		=&\|\df(\bx^{k+1})-\df(\bx^*)\|_\Lap^2 + \|\df(\bx^{k+1})-\df(\bx^{k})\|_\Lap^2\cr
		& - \|\df(\bx^{k})-\df(\bx^*)\|_\Lap^2.\quad\end{align}
		By the optimality condition $U\df(\bx^*)=0$, it follows that 
		\begin{align}\label{eq:aux1-1}
		U(\df(\bx^{k+1})-\df(\bx^*))=U\df(\bx^{k+1})=\bq^{k+1}-\bq^k,\
		\end{align}
		where the last equality follows from relation \eqref{eq:up2_line2} of Lemma~\ref{lemma:rec_rel_alg2}. 
		Therefore,
		\begin{align}\label{eq:aux2}
		2\langle U(\df(\bx^{k+1})-\df(\bx^*)), \bq^*-\bq^{k+1}\rangle
		&=2\langle \bq^{k+1}-\bq^k, \bq^*-\bq^{k+1}\rangle\cr
		&=\|\bq^k- \bq^*\|_\Fro^2 -\|\bq^{k+1}-\bq^k\|_\Fro^2 - \|\bq^{k+1}-\bq^*\|_\Fro^2.
		\end{align}
		Upon substituting relations~\eqref{eq:aux1} and~\eqref{eq:aux2} in~\eqref{eq:geo2_p1-1}, after re-arranging the terms,
		we obtain
		\begin{equation*}%\label{eq:geo2_p2}
		\begin{array}{rcl}
		&    &\frac{2\mu L}{\mu+L}\|\bx^{k+1}-\bx^*\|_\Fro^2+\frac{2}{\mu+L}\|\df(\bx^{k+1})-\df(\bx^*)\|_\Lap^2
		+c\|\bq^k-\bq^{k+1}\|_\Fro^2-c\|\df(\bx^k)-\df(\bx^{k+1})\|_\Lap^2\\
		&\leq&c\|\bq^k-\bq^*\|_\Fro^2-c\|\bq^{k+1}-\bq^*\|_\Fro^2
		-c\|\df(\bx^k)-\df(\bx^*)\|_\Lap^2+c\|\df(\bx^{k+1})-\df(\bx^*)\|_\Lap^2.
		\end{array}
		\end{equation*}
		Now, we use~\eqref{eq:aux1-1} and we add 
		$(\frac{2}{L}-2c)\|\df(\bx^k)-\df(\bx^*)\|_\Lap^2-(\frac{2}{L}-2c)\|\df(\bx^{k+1})-\df(\bx^*)\|_\Lap^2$
		to both sides of the preceding inequality, which gives
		\begin{equation*}%\label{eq:geo2_p2_2}
		\begin{array}{rcl}
		&&\frac{2\mu L}{\mu+L}\|\bx^{k+1}-\bx^*\|_\Fro^2
		+(\frac{2}{\mu+L}+c-\frac{2}{L}+2c)\|\df(\bx^{k+1})-\df(\bx^*)\|_\Lap^2\cr
		&&+(\frac{2}{L}-2c)\|\df(\bx^k)-\df(\bx^*)\|_\Lap^2-c\|\df(\bx^k)-\df(\bx^{k+1})\|_\Lap^2\cr
		&&\le 
		c\|\bq^k-\bq^*\|_\Fro^2-c\|\bq^{k+1}-\bq^*\|_\Fro^2+(\frac{2}{L}-3c)\|\df(\bx^k)-\df(\bx^*)\|_\Lap^2-(\frac{2}{L}-3c)\|\df(\bx^{k+1})-\df(\bx^*)\|_\Lap^2.	
		\end{array}
		\end{equation*}
	}
	%	\begin{equation}\label{eq:geo2_p2_2}
	%	\begin{array}{rcl}
	%	&&c\|\bq^k-\bq^*\|_\Fro^2-c\|\bq^{k+1}-\bq^*\|_\Fro^2+(\frac{2}{L}-3c)\|\df(\bx^k)-\df(\bx^*)\|_\Lap^2-(\frac{2}{L}-3c)\|\df(\bx^{k+1})-\df(\bx^*)\|_\Lap^2\\
	%	&\geq&\frac{2\mu L}{\mu+L}\|\bx^{k+1}-\bx^*\|_\Fro^2+\frac{2}{\mu+L}\|\df(\bx^{k+1})-\df(\bx^*)\|_\Lap^2+c\|\bq^k-\bq^{k+1}\|_\Fro^2\\
	%	&&+(\frac{2}{L}-2c)\|\df(\bx^k)-\df(\bx^*)\|_\Lap^2-(\frac{2}{L}-2c)\|\df(\bx^{k+1})-\df(\bx^*)\|_\Lap^2-c\|\df(\bx^k)-\df(\bx^{k+1})\|_\Lap^2\\
	%	&=&\frac{2\mu L}{\mu+L}\|\bx^{k+1}-\bx^*\|_\Fro^2+(\frac{2}{\mu+L}+c-\frac{2}{L}+2c)\|\df(\bx^{k+1})-\df(\bx^*)\|_\Lap^2\\
	%	&&+(\frac{2}{L}-2c)\|\df(\bx^k)-\df(\bx^*)\|_\Lap^2-c\|\df(\bx^k)-\df(\bx^{k+1})\|_\Lap^2
	%	\end{array}
	%	\end{equation}
	
	In view of the preceding relation, in order to prove the linear convergence, it suffices to show that 
	for some $\delta>0$ the following relation holds for all $k\ge1$,
	\begin{equation}\label{eq:geo2_p3}
	\begin{array}{rcl}
	&&\delta c\|\bq^{k+1}-\bq^*\|_\Fro^2+\delta(\frac{2}{L}-3c)\|\df(\bx^{k+1})-\df(\bx^*)\|_\Lap^2\cr
	&\le&
	\frac{2\mu L}{\mu+L}\|\bx^{k+1}-\bx^*\|_\Fro^2
	+(\frac{2}{\mu+L}+3c-\frac{2}{L})\|\df(\bx^{k+1})-\df(\bx^*)\|_\Lap^2\cr
	&&+(\frac{2}{L}-2c)\|\df(\bx^k)-\df(\bx^*)\|_\Lap^2-c\|\df(\bx^k)-\df(\bx^{k+1})\|_\Lap^2.
	\end{array}
	\end{equation}
	\an{To see this, note that assuming that relation~\eqref{eq:geo2_p3} is valid, we will have
		\begin{equation*}
		\begin{array}{rcl}
		&&\delta c\|\bq^{k+1}-\bq^*\|_\Fro^2+\delta(\frac{2}{L}-3c)\|\df(\bx^{k+1})-\df(\bx^*)\|_\Lap^2\cr
		&\le& c\|\bq^k-\bq^*\|_\Fro^2-c\|\bq^{k+1}-\bq^*\|_\Fro^2
		+(\frac{2}{L}-3c)\|\df(\bx^k)-\df(\bx^*)\|_\Lap^2-(\frac{2}{L}-3c)\|\df(\bx^{k+1})-\df(\bx^*)\|_\Lap^2,
		\end{array}
		\end{equation*}
		or equivalently
		$$c\|\bq^{k+1}-\bq^*\|_\Fro^2
		+(\frac{2}{L}-3c)\|\df(\bx^{k+1})-\df(\bx^*)\|_\Lap^2
		\leq\frac{1}{1+\delta}\left(c\|\bq^{k}-\bq^*\|_\Fro^2+(\frac{2}{L}-3c)\|\df(\bx^{k})-\df(\bx^*)\|_\Lap^2\right),$$
		which implies that that both sequences $\{\|\bq^{k+1}-\bq^*\|_\Fro^2\}$ and 
		$\{\|\df(\bx^{k+1})-\df(\bx^*)\|_\Lap^2\}$ converge to zero at the R-linear rate 
		$O\left(\frac{1}{1+\delta}\right)$.
		Moreover, by equality~\eqref{eq:aux0}, it will follow that
		\begin{equation}\label{eq:geo2_p8}
		\begin{array}{rcl}
		\|\bx^{k+1}-\bx^*\|_\Fro^2
		&=& O\left(\frac{1}{\left(1+\delta\right)^k}\right),
		\end{array}
		\end{equation}
		showing that the iterate sequence $\{\bx^k\}$ converges to the optimal solution $\bx^*$ at an R-linear rate.
	}
	
	\an{The rest of the proof is concerned with finding sufficient conditions on $\delta$ for
		relation~\eqref{eq:geo2_p3} to hold. We start by noticing that,
		since $\Lap=U^\T U$ and the matrix $U\in\R^{(n-1)\times n}$ is full row rank 
		(see Assumption~\ref{ass:conn_and_prop}), and $\laml{UU^\T}=\laml{\Lap}\leq1$, from \eqref{eq:up2_line1} we have for any $\gamma>0$,
		\begin{equation*}%\label{eq:geo2_p4}
		\begin{array}{rcl}
		c^2\lams{\Lap}\|\bq^{k+1}-\bq^*\|_\Fro^2
		&\leq&
		(1+\gamma)\|\bx^{k+1}-\bx^*\|_\Fro^2
		+\left(1+\frac{1}{\gamma}\right)c^2\|\df(\bx^{k+1})-\df(\bx^k)\|_\Lap^2;
		\end{array}
		\end{equation*}
		furthermore, it always holds that $\|\df(\bx^k)-\df(\bx^{k+1})\|_\Lap^2\leq2\|\df(\bx^k)-\df(\bx^{*})\|_\Lap^2+2\|\df(\bx^{k+1})-\df(\bx^{*})\|_\Lap^2$.
		From the preceding two inequalities it follows that relation~\eqref{eq:geo2_p3} is valid 
		as long as the following relation holds} 
	\begin{equation*}%\label{eq:geo2_p5}
	\begin{array}{rcl}
	&&\frac{\delta(1+\gamma)}{c\lams{\Lap}}\|\bx^{k+1}-\bx^*\|_\Fro^2+2\left(\frac{\delta c(1+\frac{1}{\gamma})}{\lams{\Lap}}+c\right)\|\df(\bx^k)-\df(\bx^*)\|_\Lap^2\cr
	&\le&
	\frac{2\mu L}{\mu+L}\|\bx^{k+1}-\bx^*\|_\Fro^2+(\frac{-2\mu}{(\mu+L)L}+c-\delta(\frac{2}{L}-3c)-\frac{2\delta c(1+\frac{1}{\gamma})}{\lams{\Lap}})\|\df(\bx^{k+1})-\df(\bx^*)\|_\Lap^2\\
	&&+(\frac{2}{L}-2c)\|\df(\bx^k)-\df(\bx^*)\|_\Lap^2,
	\end{array}
	\end{equation*}
	or equivalently
	\begin{equation}\label{eq:geo2_p6}
	\begin{array}{rcl}
	&&\left(\frac{2\mu}{(\mu+L)L}-c+\delta(\frac{2}{L}-3c)+\frac{2\delta c(1+\frac{1}{\gamma})}{\lams{\Lap}}\right)\|\df(\bx^{k+1})-\df(\bx^*)\|_\Lap^2+2\left(\frac{\delta c(1+\frac{1}{\gamma})}{\lams{\Lap}}+c\right)\|\df(\bx^k)-\df(\bx^*)\|_\Lap^2\cr
	&\le &\left(\frac{2\mu L}{\mu+L}-\frac{\delta(1+\gamma)}{c\lams{\Lap}}\right)\|\bx^{k+1}-\bx^*\|_\Fro^2+(\frac{2}{L}-2c)\|\df(\bx^k)-\df(\bx^*)\|_\Lap^2.
	\end{array}
	\end{equation}
	
	\an{We next further examine some sufficient relations for \eqref{eq:geo2_p6} to be valid. 
		In particular, by the assumption that $\laml{\Lap}\le 1$ and by the Lipschitz continuity of $\df$,
		we have
		\[\frac{1}{L^2}\|\df(\bx^{k+1)}-\df(\bx^*)\|_\Lap^2\le \frac{1}{L^2}\|\df(\bx^{k+1)}-\df(\bx^*)\|_\Fro^2
		\le
		\|\bx^{k+1}-\bx^*\|_\Fro^2.\]
		Using the preceding inequality, when 
		$\frac{2\mu L}{\mu+L}-\frac{\delta(1+\gamma)}{c\lams{\Lap}}\ge0$, 
		relation~\eqref{eq:geo2_p6} will hold if the following relation holds:
		\begin{equation*}%\label{eq:geo2_p6}
			\begin{array}{rcl}
				&&\left(\frac{2\mu}{(\mu+L)L}-c+\delta(\frac{2}{L}-3c)+\frac{2\delta c(1+\frac{1}{\gamma})}{\lams{\Lap}}\right)\|\df(\bx^{k+1})-\df(\bx^*)\|_\Lap^2+2\left(\frac{\delta c(1+\frac{1}{\gamma})}{\lams{\Lap}}+c\right)\|\df(\bx^k)-\df(\bx^*)\|_\Lap^2\cr
				&\le &\frac{1}{L^2}\left(\frac{2\mu L}{\mu+L}-\frac{\delta(1+\gamma)}{c\lams{\Lap}}\right)
				\|\df(\bx^{k+1})-\df(\bx^*)\|_\Fro^2+(\frac{2}{L}-2c)\|\df(\bx^k)-\df(\bx^*)\|_\Lap^2.
			\end{array}
		\end{equation*}
		The preceding relation will hold,  as long as $\delta>0$ is small enough so that, 
		for some $\gamma>0$, we have}
	\begin{subequations}\label{eq:geo2_p7}
		\begin{numcases}{}
			\frac{2\mu L}{\mu+L}-\frac{\delta(1+\gamma)}{c\lams{\Lap}}\geq 0,\label{eq:geo2_p7_l0}\\
			\frac{2\mu L}{\mu+L}-\frac{\delta(1+\gamma)}{c\lams{\Lap}}\geq L^2\left(\frac{2\mu}{(\mu+L)L}-c+\delta(\frac{2}{L}-3c)+\frac{2\delta c(1+\frac{1}{\gamma})}{\lams{\Lap}}\right),\label{eq:geo2_p7_l1}\\
			\frac{2}{L}-2c\geq 2\left(\frac{\delta c(1+\frac{1}{\gamma})}{\lams{\Lap}}+c\right).\label{eq:geo2_p7_l2}
		\end{numcases}
	\end{subequations}
	\an{Given a $\gamma>0$, to satisfy the conditions in~\eqref{eq:geo2_p7},
		one can choose $\delta>0$ so that
		\begin{equation*}%\label{eq:final2_delta}
			\delta\leq\min\left\{
			\frac{2\mu Lc\lams{\Lap}}{(\mu+L)(1+\gamma)},\frac{c^2L^2\lams{\Lap}}{1+\gamma+cL(2-3cL)\lams{\Lap}+2c^2L^2(1+\frac{1}{\gamma})},\frac{(1-2cL)\lams{\Lap}}{cL(1+\frac{1}{\gamma})}\right\}.
		\end{equation*}
	}
\end{proof}

\an{As a consequence of Theorem~\ref{theorem:geo2},
	we have the following corollary regarding the scalability of the Mirror-EXTRA.}

\begin{corollary}[Scalability of Mirror-EXTRA] 
	\an{Under the conditions of Theorem~\ref{theorem:geo2},} for Algorithm 2 to reach $\varepsilon$-accuracy, the number of iterations needed is \an{of the order} $O\left(\kappa_{\Lap}\kappa_\f\ln\left(\frac{1}{\varepsilon}\right)\right)$, where $\kappa_\f=\frac{L}{\mu}$ is the condition number of the objective function $\f$ 
	and $\kappa_\Lap=\frac{1}{\lams{\Lap}}$ is the condition number of the graph. 
\end{corollary}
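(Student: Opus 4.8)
The plan is to optimize, over the free parameters $c\in\left(0,\frac{1}{2L}\right)$ and $\gamma>0$, the lower bound on the linear-rate constant $\delta$ established in Theorem~\ref{theorem:geo2}, and then to convert the resulting R-linear rate $\|\bx^{k+1}-\bx^*\|_\Fro^2=O\!\left((1+\delta)^{-k}\right)$ into an iteration count. Recall that Theorem~\ref{theorem:geo2} permits any
\[
\delta\le\min\left\{\frac{2\mu Lc\lams{\Lap}}{(\mu+L)(1+\gamma)},\ \frac{c^2L^2\lams{\Lap}}{1+\gamma+cL(2-3cL)\lams{\Lap}+2c^2L^2(1+\tfrac{1}{\gamma})},\ \frac{(1-2cL)\lams{\Lap}}{cL(1+\tfrac{1}{\gamma})}\right\}.
\]
Since $\kappa_{\Lap}=1/\lams{\Lap}$, it suffices to exhibit admissible $c,\gamma$ for which the right-hand side is of the order $\lams{\Lap}/\kappa_\f$, because then $\delta$ may be taken with $\delta^{-1}=O(\kappa_{\Lap}\kappa_\f)$.

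First I would fix $c=\frac{1}{4L}$, which lies in $\left(0,\frac{1}{2L}\right)$ and makes $cL=\frac14$, $c^2L^2=\frac{1}{16}$, $cL(2-3cL)=\frac{5}{16}$, and $1-2cL=\frac12$ all universal constants bounded away from $0$; and I would fix $\gamma=1$, so that $1+\gamma=2$ and $1+\frac{1}{\gamma}=2$ are constants. Using the standing assumption $\lams{\Lap}\le\laml{\Lap}\le1$ of Theorem~\ref{theorem:geo2}, the three terms in the minimum then become, up to universal multiplicative constants, $\dfrac{\mu}{\mu+L}\,\lams{\Lap}$, $\dfrac{\lams{\Lap}}{\Theta(1)+\Theta(\lams{\Lap})}=\Theta(\lams{\Lap})$, and $\Theta(\lams{\Lap})$, respectively. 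Because $\dfrac{\mu}{\mu+L}=\dfrac{1}{1+\kappa_\f}=\Theta(1/\kappa_\f)$ and $\kappa_\f\ge1$, the first term is the smallest, so one may choose
\[
\delta=\Theta\!\left(\frac{\lams{\Lap}}{\kappa_\f}\right),\qquad\text{whence}\qquad \frac{1}{\delta}=O\!\left(\kappa_{\Lap}\kappa_\f\right).
\]

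Finally I would translate the rate into complexity. Shrinking $\delta$ if necessary we may assume $\delta\in(0,1)$; to reach $\varepsilon$-accuracy it is enough that $(1+\delta)^{-k}\le\varepsilon$, i.e. $k\ge\ln(1/\varepsilon)/\ln(1+\delta)$, and since $\ln(1+\delta)\ge\delta/2$ on $(0,1)$ it suffices to take $k=O\!\left(\delta^{-1}\ln(1/\varepsilon)\right)=O\!\left(\kappa_{\Lap}\kappa_\f\ln(1/\varepsilon)\right)$ iterations, which is the asserted scalability. I do not expect a genuine obstacle: the argument is pure parameter tuning on top of Theorem~\ref{theorem:geo2}, and the only point needing a little care is to keep $1+\gamma$ (and hence $1+\tfrac{1}{\gamma}$) bounded by a constant rather than letting $\gamma\to0$ or $\gamma\to\infty$, so that the first, $\kappa_\f$-limited, term in the minimum is not needlessly degraded and the other two remain $\Theta(\lams{\Lap})$.
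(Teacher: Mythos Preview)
Your proposal is correct and follows essentially the same approach as the paper: both invert the three-term bound on $\delta$ from Theorem~\ref{theorem:geo2} and optimize over the free parameters $c$ and $\gamma$, arriving at $\delta^{-1}=O(\kappa_{\Lap}\kappa_\f)$. The only cosmetic difference is that the paper writes the iteration count as a sum of the three reciprocals and then simplifies for generic $\gamma>0$ and $c\in(0,\tfrac{1}{2L})$, whereas you make the concrete choice $c=\tfrac{1}{4L}$, $\gamma=1$ up front and identify directly which term of the minimum is binding.
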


\begin{proof}  	
	\an{From the bound on $\delta$ in Theorem~\ref{theorem:geo2} 
		we see that, to reach $\varepsilon$-accuracy,
		the number of iterations needed is (the factor $\ln\left(\frac{1}{\varepsilon}\right)$ is omitted for simplicity)}
	\begin{equation*}%\label{eq:comp2_p1}
		\begin{array}{rcl}
			&&O\left(\frac{(\mu+L)(1+\gamma)}{2\mu Lc\lams{\Lap}}\right)+O\left(\frac{1+\gamma+cL(2-3cL)\lams{\Lap}+2c^2L^2(1+\frac{1}{\gamma})}{c^2L^2\lams{\Lap}}\right)+O\left(\frac{cL(1+\frac{1}{\gamma})}{(1-2cL)\lams{\Lap}}\right)\\
			&=&O\left(\frac{1+\gamma}{\mu c\lams{\Lap}}\right)+O\left(\frac{1+\gamma}{c^2L^2\lams{\Lap}}\right)+O\left(\frac{1+\frac{1}{\gamma}}{\lams{\Lap}}\right)+O\left(\frac{1+\frac{1}{\gamma}}{\left(\frac{1}{cL}-2\right)\lams{\Lap}}\right).
		\end{array}
	\end{equation*}
	\an{Given $\gamma>0$ and $c\in(0,\frac{1}{2L})$,  the preceding relation
		can be further reduced to the order of $O\left(\frac{L}{\mu}\frac{1}{\lams{\Lap}}\right)$. 
		Hence, the final complexity is of the order 
		$O\left(\kappa_{\Lap}\kappa_\f\ln\left(\frac{1}{\varepsilon}\right)\right)$.} 
\end{proof}
\begin{remark}[Scalability]\an{
		The complexity of the Mirror-EXTRA is slightly worse than the complexity of Mirror-P-EXTRA, which is 
		$O\left((\kappa_{\Lap}+\sqrt{\kappa_{\Lap}\kappa_\f})\ln\left(\frac{1}{\varepsilon}\right)\right)$.
		The Mirror-EXTRA has a lower per-iteration cost 
		at the expense of less favorable scalability,  and $O\left(\kappa_{\Lap}\kappa_\f\ln\left(\frac{1}{\varepsilon}\right)\right)$ scalability of Mirror-EXTRA also coincides with that of the algorithm proposed 
		in reference~\cite{doan2017distributed}.}
\end{remark}

In the following subsections, we will first provide a gradient projection algorithm which can be understood as a hybrid of Algorithm 1 and Algorithm 2. Then we will conduct two case studies to show how the methodology of using the “mirror relation” can help to design more powerful distributed resource allocation algorithms based on existing consensus optimization algorithms.

\subsection{\cred{A projection gradient-based algorithm: Mirror-PG-EXTRA}}
Since Algorithm 1 has to solve a constrained optimization problem per iteration over each agent (which can be costly) while Algorithm 2 cannot be applied to constrained problems, we consider another algorithm that uses a gradient-projection update which can be understood as a hybrid of Algorithms 1 and 2. We name the newly constructed algorithm Mirror-PG-EXTRA (see Algorithm 3) after a previous algorithm for consensus optimization, PG-EXTRA \cite{Shi2015_2}.
\smallskip
\begin{center}
	{\textbf{Algorithm 3: Mirror-PG-EXTRA}}
	
\smallskip
\begin{tabular}{l}
		\hline
		\emph{  } Each agent chooses the same $c\in(0,\frac{1}{2L\laml{\Lap}})$;\\
		\emph{  } Each agent 
		$i$ chooses $\beta_i$ such that $B\succcurlyeq c\Lap$;\\
		\emph{  } $\forall i$, initialize with $x_i^0\in\Omega_i$, $s_i^0=0$, and $y_i^{-1}=0$;\\
		\emph{  } Each agent $i$ \textbf{for} $k=0,1,\ldots$ \textbf{do}\\
		\qquad$y_i^{k}=y_i^{k-1}+\sum_{j\in\Ni\bigcup\{i\}}\text{\L}_{ij}(\df_j(x_j^k)+s_j^{k})$;\\
		\qquad$x_i^{k+1}={\proj_{\Omega_i}}\left\{r_i-2cy_i^{k}+cy_i^{k-1}+\beta_i s_i^k\right\}$;\\
		\qquad$s_i^{k+1}=s_i^k-\frac{1}{\beta_i}(x_i^{k+1}-r_i+2cy_i^k-cy_i^{k-1})$;\\
		\emph{  } \textbf{end}\\
		\hline
	\end{tabular}
\end{center}
In Algorithm 3, the matrix $B$ is a diagonal matrix with entries $B_{ii}=\beta_i$ on its diagonal, while
$\proj_{\Omega_i}[\cdot]$ is the projection operator on the set $\Omega_i$ with respect to the Euclidean norm. In the absence of the per-agent-constraints, Algorithm 3 degenerates to Algorithm 2 \cred{(this can be verified by writing out the recursive relations of $\{\bx^k\}$, $\{\by^k\}$, and $\{\bs^k\}$ in Algorithm 3 and eliminating the sequence $\{\bs^k\}$ in the relations)}. We do not formally establish the convergence or the convergence rates for Algorithm 3, though we believe 
it has $1/k$ convergence rate under convexity and smoothness assumptions. An immediate guess on the convergence property of Algorithm 3 is that it will be slower than Algorithm 1 in terms of the number of iterations needed to reach a given accuracy. We will evaluate it numerically in our simulations (see Section \ref{sec:numer}).

\section{\cred{Improving the rates by Nesterov's acceleration}}

Suppose that the per-agent constraints are absent, i.e., $\Omg_i=\R^p$ for all $i$, there is a easy way to apply Nesterov's accelerated gradient method to achieve optimal convergence rates for solving the resource allocation problem \eqref{eq:F2} over fixed connected undirected graphs. To expose this fact, let us define \[ \bJ(\bz) = \f(\br+ U^\T\bz), \] where $U\in\R^{(n-1)\times n}$ is the matrix defined in Assumption \ref{ass:conn_and_prop}, i.e., a matrix that satisfies $\Lap=U^\T U$. Since the underlying graph is connected, we have that the columns of $U^\T$ span ${\bf 1}^{\perp}$ (because for any matrix $A$, we have that $\spa{A^T} = (\nul{A})^{\perp}$), and therefore the problem we want to solve is equivalent to unconstrained minimization of $\bJ(\bz)$. This can be seen by noticing the fact that minimizing $\f(\bx)$ over $\bx\in\R^{n\times p}$ and $\bz\in\R^{(n-1)\times p}$ subject to $\bx=\br+U^\T\bz$ will give us the optimal solution $(\bx^*,\bz^*)$ which actually includes a minimizer $\bz^*$ of the function $\bJ(\bz)$ and a solution $\bx^*$ to the resource allocation problem. In addition, $\bx^*$ can be recovered by $\bx^*=\br+U^\T\bz^*$ once $\bz^*$ is retrieved.

To make the discussion concise, without loss of generality, let us choose $\Lap$ such that $\|U\|_2=\sigmax{U}=1$. This can be done by choosing, for example, $\Lap=0.5(I-W)$ where $W$ is a symmetric doubly stochastic matrix that is compatible with the graph (see Subsection \ref{sec:notation} and Corollary \ref{cor:two}). Also suppose that Assumption \ref{ass:asss_for_geo2} (Lipscthiz continuity of $\df(\cdot)$) holds and $L=\max_i L_i$ is the Lipschitz constant of $\df(\cdot)$, then the gradient mapping of $\bJ$ is given by \[\dbJ(\bz) = U\df(\br + U^\T \bz) \] and is $\dbJ(\cdot)$ is $L$-Lipschitz as well.

Next we will show that an equivalent form of the Nesterov's accelerated gradient method on finding the minimizer of $\bJ(\bz)$ can be implemented in decentralized way and clearly this can be utilized to obtain $\bx^*$. The recursion of the Nesterov accelerated gradient method (see \cite{nesterov2013introductory,bubeck2015convex} for the introduction of the algorithm and its analysis) for minimizing $\bJ(\bz)$ is 
\begin{subequations}\label{eq:Nesterov}
	\begin{align}
	&\by^{k+1} =\bz^k - \alpha \dbJ(\bz^k), \\
	&\bz^{k+1} = \by^{k+1} + \beta_k(\by^{k+1}-\by^{k}),
	\end{align}
\end{subequations}
initialized with an arbitrary $\bz^0\in\R^{(n-1)\times p}$ and $\by^0=\bz^0$, where $\{\by^k\}$ is an auxiliary variable sequence and $\{\beta_k\}$ is scalar sequence of parameters that controls the extrapolation and should be chosen based on the functional conditions (whether $\bJ(\bz)$ is strongly convex in $\bz$ or not); the scalar $\alpha$ is a positive constant step size. Since the operations $U$ and $U^\T$ in the computation of $\dbJ(\bz^k)$ are not immediately implementable in finite rounds of communication, we cannot perform \eqref{eq:Nesterov} directly. However, we can substitute variables in a way to still find a sequence we need that converges to the optimal solution $\bx^*$.

For any $\bz^k$ and $\by^k$, let us define \[\bx^k = \br + U^\T \bz^k \text{ and } \bv^k = \br + U^\T \by^k.\] Then from \eqref{eq:Nesterov}, we can obtain the following recursions of $\bx^k$ and $\bv^k$ which inherit the convergence properties of \eqref{eq:Nesterov}:
\begin{subequations}\label{eq:Nesterov2}
	\begin{align}
	&\bv^{k+1} =\bx^k - \alpha \Lap\df(\bx^k), \\
	&\bx^{k+1} = \bv^{k+1} + \beta_k(\bv^{k+1}-\bv^{k})
	\end{align}
\end{subequations}
with initialization $\bv^0=\bx^0=\br+\Lap\bc$ with an arbitrary $\bc\in\R^{n\times p}$ (can be chosen distributedly).

The algorithm given in \eqref{eq:Nesterov2} can be implemented in a fully decentralized fashion. We thus can apply Nesterov's accelerated gradient method for the resource allocation problem over a network and obtain improved convergence rates. Specifically, we have the following two propositions for the claims of rates. Proofs are omitted due to the fact they are direct applications of Nesterov's method.

\begin{proposition}\label{prop:acc1}
	Suppose that Assumption \ref{ass:asss_for_geo1} holds and $\Lap$ is chosen such that $\sigmax{\Lap}=1$. If we choose $\alpha=\frac{1}{L}$ and $\beta_k=\frac{k}{k+3}$, with the algorithm given in \eqref{eq:Nesterov2}, it is guaranteed that \[\f(\bv^k)-\f(\bx^*)=O\left(\frac{L}{\lams{\Lap}}\frac{\|\bx^0-\bx^*\|_\Fro^2}{k^2}\right).\]	
\end{proposition}

To explore the geometric convergence, it is expected that one assumes strong convexity. Suppose that Assumption \ref{ass:asss_for_geo2} holds, then for any $\ba$ and $\bb$ with proper dimensions,
\begin{align}
&\langle\dbJ(\ba)-\dbJ(\bb),\ba-\bb\rangle\\
=&\langle U\df(\br + U^\T \ba)-U\df(\br + U^\T \bb),\ba-\bb\rangle\\
=&\langle \df(\br + U^\T \ba)-\df(\br + U^\T \bb), (\br+U^\T\ba)- (\br+U^\T\bb)\rangle\\
\geq&\mu\|U^\T(\ba-\bb)\|_\Fro^2\\
\geq&\mu\lams{\Lap}\|\ba-\bb\|_\Fro^2
\end{align}

\begin{proposition}\label{prop:acc2}
	Suppose that Assumptions \ref{ass:asss_for_geo1} and \ref{ass:asss_for_geo2} hold and $\Lap$ is chosen such that $\sigmax{\Lap}=1$. If we choose $\alpha=\frac{1}{L}$ and $\beta_k=\beta=\frac{\sqrt{Q}-1}{\sqrt{Q}+1}$ where $Q=\frac{L}{\mu\lams{\Lap}}$, then with the algorithm given in \eqref{eq:Nesterov2}, it is guaranteed that \[\f(\bv^k)-\f(\bx^*)=O\left(\left(\mu+\frac{L}{\lams{\Lap}}\right)\|\bx^k-\bx^*\|_\Fro^2 e^{-\frac{k}{\sqrt{Q}}}\right)\]
	where $e$ is the base of the natural logarithm. In other words, to reach $\varepsilon$-accuracy, the number of iterations needed is in the order of $O\left(\sqrt{\kappa_{\f}\kappa_{\Lap}}\ln(\varepsilon^{-1})\right)$ where the $\kappa_{\f}=L/\mu$ is the condition number of the objective and $\kappa_{\Lap}=1/\lams{\Lap}$ is the condition number of the graph. Furthermore, when $\Lap=0.5(I-W)$ where $W$ is the lazy Metropolis matrix (see also Corollary \ref{cor:two}), the number of iterations needed is of the order of $O\left(n\kappa_{\f}^{0.5}\ln(\varepsilon^{-1})\right)$ which is linear in $n$.
\end{proposition}

Although using the Nesterov's acceleration gives better dependency in $\kappa_{\f}$ and $\kappa_{\Lap}$, the needed algorithmic parameter $Q$ involves network wide information. When $W$ is the lazy Metropolis matrix, we can, to some extent, address this issue by relaxing the parameters we use. Instead of using $\lams{\Lap}$ or $n$, let us assume that all agents in the network know a common parameter $\hat{n}$ which is an upper bound of the network size $n$. We state the result in the following corollary.
\begin{corollary}
Suppose that Assumptions \ref{ass:asss_for_geo1} and \ref{ass:asss_for_geo2} hold and $\Lap=0.5(I-W)$ where $W$ is the lazy Metropolis matrix. If we choose $\alpha=\frac{1}{L}$ and $\beta_k=\beta=\frac{\sqrt{Q}-1}{\sqrt{Q}+1}$ where $Q=71\hat{n}^2L/\mu$, then in $O\left(\hat{n}\kappa_{\f}^{0.5}\ln(\varepsilon^{-1})\right)$ iterations, the algorithm will give $\f(\bv^k)-\f(\bx^*)\leq\varepsilon$.
\end{corollary}

The above discussion only applies to problems with smooth objectives assuming no local constraints. Adapting Nesterov's accelerated proximal gradient method for the resource allocation problem with local constraints will be one of our future directions.

\section{\cred{Extensions: Using the Mirror to Conquer More Complicated Scenarios}}\label{sec:ext}
In this section, we will conduct two case studies to show how our methodology can help to design more powerful distributed resource allocation algorithms based on existing consensus optimization algorithms. 
\subsection{Working over time-varying directed graphs: Mirror-Push-DIGing}

Before introducing the so called Mirror-Push-DIGing algorithm for solving the decentralized resource allocation problem, let us review the Push-DIGing algorithm which is proposed in reference \cite{Nedich2016} for solving the decentralized consensus optimization over time-varying directed graphs with geometric convergence guarantees. The procedure of Push-DIGing is as follows:

\begin{center}
	
	\begin{tabular}{l}
		\hline
		\emph{  } Choose step-size $\alpha>0$ and pick any $\bu(0)=\bx(0)\in \R^{n \times p}$ and any $\bd(0)=\by(0)\in \R^{n \times p}$;\\
		\emph{  } Initialize , $\bv(0)=\one\in\R^n$, and $\bV(0)=\dia{\bv(0)}$;\\
		\emph{  } \textbf{for} $k=0,1,\ldots$ \textbf{do}\\
		\qquad$\bv(k+1)=\Wco(k)\bv(k)$; $\bV(k+1)=\dia{\bv(k+1)}$;\\		
		\qquad$\bu(k+1)=\Wco(k)\left(\bu(k)-\alpha\by(k)\right)$;\\
		\qquad$\bx(k+1)=(\bV(k+1))^{-1}\bu(k+1)$;\\
		\qquad$\bd(k+1)=\df(\bx(k+1))$;\\
		\qquad$\by(k+1)=\Wco(k)\by(k)+\bd(k+1)-\bd(k)$;\\
		\emph{  } \textbf{end}\\
		\hline
	\end{tabular}
\end{center}

Since the discussion is under the time-varying set-up, instead of using the right upper corner mark $^k$ as previous, we now use $(k)$ to indicate the $k$-th iteration as well as the time index $k$. In the Push-DIGing algorithm, the aggregated symbols $\bu$, $\bv$, $\bx$, $\by$, $\df(\bx)$ are all defined in a similarly way as we have explained for the quantities $\bx$ and $\sdf(\bx)$ in Section \ref{sec:notation} (each agent $i$ maintains the $i$-th row). The matrix $\Wco$ is a column stochastic matrix (namely, each row sums to $1$) that admits the topology of the network (directed graph). A popular choice of initialization sets $\bd(0)=\by(0)=\df(\bx(0))$. Using uncoordinated step sizes is possible \cite{Nedic2016geometrically} but we will keep using the same step size $\alpha$ across agents for the sake of simplicity. 

To develop an algorithm for resource allocation from Push-DIGing (also considered as a gradient-based method, or termed as explicit method or forward method in different research contexts), we need to first tweak the recursion to produce a proximal method (also termed as implicit method or backward method in different research contexts). The recursive relations of such proximal variant is given by\footnote{A proximal-gradient variant can also be derived following a similar idea.} 
\begin{equation}\label{eq:Push-P-DIGing}
\begin{array}{l}
\bv(k+1)=\Wco(k)\bv(k),
\bV(k+1)=\dia{\bv(k+1)},\\
\bu(k+1)=\Wco(k)\bu(k)-\alpha\by(k+1),\\
\bx(k+1)=(\bV(k+1))^{-1}\bu(k+1),\\
\bd(k+1)=\sdh(\bx(k+1));\\
\by(k+1)=\Wco(k)\by(k)+\bd(k+1)-\bd(k),
\end{array}
\end{equation}
where we always set $\bv(0)=\one\in\R^n$, $\bV(0)=\dia{\bv(0)}$. The remaining quantities should be initialized following the general rules $\bu(0)=\bx(0)\in\R^{n\times p}$ such that $\bh(\bx(0))$ is finite, and $\bd(0)=\by(0)\in\R^{n\times p}$. A specific choice of initialization could be $\bu(0)=\bx(0)=\arg\min_{\bx\in\Omega}\f(\bx)+\frac{1}{2\alpha}\|\bx\|_\Fro^2$ and $\bd(0)=\by(0)=-\frac{1}{\alpha}\bx(0)$. Note that other initialization choices are possible but here we provide a simple one.

The original Push-DIGing algorithm has adopted the so-termed ``Adapt-then-Combine'' (ATC) strategy in its $\bu$-update to accelerate the convergence (see Remark 3 of reference \cite{Nedich2016}). But in the proximal variant \eqref{eq:Push-P-DIGing}, such strategy cannot be applied due to the implementability (see reference \cite{li2017decentralized} for a similar story of not being able to take advantage of the ATC strategy to accelerate the proximal update). Also, we have replaced $\df$ by $\sdh$ to allow a non-smooth objective. Finally the recursive relations \eqref{eq:Push-P-DIGing} with a simple initialization can be resolved as follows.

\begin{center}
	
	\begin{tabular}{l}
		\hline
		\emph{  } Choose step-size $\alpha>0$;\\
		\emph{  } Initialize $\bv(0)=\one\in\R^n$, $\bV(0)=\dia{\bv(0)}$, $\bs(0)=\zero\in\R^n$,\\
		\emph{  } $\bu(0)=\bx(0)=\arg\min_{\bx\in\Omega}\{\f(\bx)+\frac{1}{2\alpha}\|\bx\|_\Fro^2\}$, $\tby(0)=-\bx(0)$;\\
		\emph{  } \textbf{for} $k=0,1,\ldots$ \textbf{do}\\
		\qquad$\bv(k+1)=\Wco(k)\bv(k)$; $\bV(k+1)=\dia{\bv(k+1)}$;\\		
		\qquad$\bs(k+1)=\Wco(k)(\bu(k)-\tby(k))+\bs(k)-\bu(k)$;\\
		\qquad$\bx(k+1)=\arg\min_{\bx\in\Omega}\left\{ \f(\bx)+\frac{1}{2\alpha}\|\bx-(\bV(k+1))^{-1}\bs(k+1)\|_{\bV(k+1)}^2\right\}$;\\
		\qquad$\bu(k+1)=\bV(k+1)\bx(k+1)$;\\
		\qquad$\tby(k+1)=\Wco(k)\bu(k)-\bu(k+1)$;\\
		\emph{  } \textbf{end}\\
		\hline
	\end{tabular}
\end{center}

Having the above intuitive introduction on Push-DIGing and its proximal variation, now we are ready to give the Mirror-Push-DIGing algorithm for decentralized resource allocation over time-varying directed graphs. The design of Mirror-Push-DIGing will start from keeping the recursive relation
\begin{equation}\label{eq:Mirror-Push-DIGing}
\begin{array}{l}
\bv(k+1)=\Wco(k)\bv(k), \bV(k+1)=\dia{\bv(k+1)},\\
\bu(k+1)=\Wco(k)\bu(k)-\alpha\by(k+1),\\
\sdh(\bx(k+1))=(\bV(k+1))^{-1}\bu(k+1),\\
\bd(k+1)=\bx(k+1)-\br;\\
\by(k+1)=\Wco(k)\by(k)+\bd(k+1)-\bd(k),
\end{array}
\end{equation}
where we always set $\bv(0)=\one\in\R^{n\times p}$, $\bV(0)=\dia{\bv(0)}$. The remaining quantities should be initialized following the general rules $\bx(0)\in\Omega$, $\bu(0)=\sdh(\bx(0))$, and $\by(0)=\bd(0)\in\R^{n\times p}$. A specific choice of initialization could be $\bx(0)=\arg\min_{\bx\in\Omega}\{\f(\bx)+\frac{\alpha}{2}\|\bx\|_\Fro^2\}$, $\bu(0)=-\alpha\bx(0)$, and $\by(0)=\bd(0)=\bx(0)-\br$. Again, other initialization choices are possible but we only provide a simple one here. Finally, by resolving relations in \eqref{eq:Mirror-Push-DIGing} and a corresponding simple choice of initial conditions, we finally obtain the implementation of Mirror-Push-DIGing as follows.

\begin{center}
	
	\begin{tabular}{l}
		\hline
		\emph{  } Choose step-size $\alpha>0$ and pick any $\bx(0)\in\Omega$;\\
		\emph{  } Initialize $\bv(0)=\one\in\R^{n}$, $\bV(0)=\dia{\bv(0)}$,\\
		\emph{  } $\bs(0)=\frac{1}{\alpha}\sdf(\bx(0))+\bx(0)$, $\by(0)=\bx(0)-\br$;\\
		\emph{  } \textbf{for} $k=0,1,\ldots$ \textbf{do}\\
		\qquad$\bv(k+1)=\Wco(k)\bv(k)$; $\bV(k+1)=\dia{\bv(k+1)}$;\\		
		\qquad$\bs(k+1)=\Wco(k)\left(\bs(k)-\bx(k)-\by(k)\right)+\bx(k)$;\\
		\qquad$\bx(k+1)=\arg\min_{\bx\in\Omega} \left\{\f(\bx)+\frac{\alpha}{2}\|\bx-\bs(k+1)\|_{(\bV(k+1))^{-1}}^2\right\}$;\\
		\qquad$\by(k+1)=\Wco(k)\by(k)+\bx(k+1)-\bx(k)$;\\
		\emph{  } \textbf{end}\\
		\hline
	\end{tabular}
\end{center}

The per-agent form is listed below in Algorithm 4\footnote{Here in the description of Algorithm 4, $\Niin(k)$ denotes the set of in-neighbors of agent $i$ at iteration/time index $k$. At iteration $k$, an in-neighbor of agent $i$ is defined as a neighboring agent $j$ that can effectively/reliably send its information (quantities $s_j(k)+y_j(k)$, $y_j(k)$, and $v_j(k)$) directly to agent $i$ without relay agents.}.

\newpage
\begin{center}
	{\textbf{Algorithm 4: Mirror-Push-DIGing}}
	
\smallskip	
	\begin{tabular}{l}
		\hline
		\emph{  } Each agent $i$ chooses the same parameter $\alpha>0$ and picks any $x_i(0)\in\Omega_i$;\\
		\emph{  } Each agent $i$ initializes with $v_i(0)=1$, $s_i(0)=\frac{1}{\alpha}\widetilde\nabla f_i(x_i(0))+x_i(0)$, $y_i(0)=x_i(0)-r_i$;\\
		\emph{  } Each agent $i$ \textbf{for} $k=0,1,\ldots$ \textbf{do}\\
		\qquad$v_i(k+1)=\sum_{j\in\Niin(k)\bigcup\{i\}} \wco_{ij}(k)v_j(k);$\\
		\qquad$s_i(k+1)=\sum_{j\in\Niin(k)\bigcup\{i\}} \wco_{ij}(k)\left(s_j(k)-x_j(k)-y_j(k)\right)+x_i(k)$;\\
		\qquad$x_i(k+1)=\arg\min_{x_i\in\Omega_i}\left\{f_i(x_i)+\frac{\alpha}{2v_i(k+1)}\|x_i-s_i(k+1)\|_2^2\right\}$;\\
		\qquad$y_i(k+1)=\sum_{j\in\Niin(k)\bigcup\{i\}} \wco_{ij}(k)y_j(k)+x_i(k+1)-x_i(k)$;\\	
		\emph{  } \textbf{end}\\
		\hline
	\end{tabular}
\end{center}

Here, $\Niin(k)$ denotes the set of in-neighbors of agent $i$ at iteration $k$. 
Each agent $j$ sends the quantities $\wco_{ij}(k)v_j(k)$, $\wco_{ij}(k)\left(s_j(k)-x_j(k)-y_j(k)\right)$
and $\wco_{ij}(k)y_j(k)$ to its out-neighbors. Upon receiving these quantities from its in-neighbors $j\in\Niin(k)$,
each agent $i$ sums these quantities over $j\in\Niin(k)$
and combines them with its own information, at different steps in different ways. 
\cred{We have not performed convergence analysis of Algorithm 4.
Based on what we have obtained in reference~\cite{Nedich2016}, a geometric convergence may be obtained when each $\Omega_i=\R^p$. We have some numerical tests of this algorithm later in Section~\ref{sec:numer}.}

\subsection{Dealing with local couplings}
A generalization of the resource allocation problem which explicitly considers the local linear coupling of multiple resources is as follows
\begin{equation}\label{eq:F_coup}
\begin{array}{cl}
\min\limits_{\bx\in\R^p} & \bh(\bx)=\sum\limits_{i=1}^n h_i(x_i),\\
\st & \sum\limits_{i=1}^n(A_i x_i-r_i)=0.
\end{array}
\end{equation}
Here, $\forall i\in[n]$, we assume $x_i\in\R^{p_i}$, $A_i\in\R^{m\times p_i}$, $r_i\in\R^m$, and obviously $p=\sum_{i=1}^n p_i$. To simplify our presentation, from now on, we will use
\begin{equation}
\bx\triangleq\left(
\begin{array}{c}
x_1\\
x_2\\
\vdots \\
x_n
\end{array}
\right)\in\R^p;\quad
\bA\triangleq\left(
\begin{array}{cccc}
A_1&0&\cdots&0\\
0&A_2&\cdots&0\\
\vdots&\vdots&\ddots&\vdots\\
0&0&\cdots&A_n\\
\end{array}
\right)\in\R^{(mn)\times p};\quad
\br\triangleq\left(
\begin{array}{c}
r_1\\
r_2\\
\vdots \\
r_n
\end{array}
\right)\in\R^{mn};
\end{equation} 
Note that, unlike previous sections, we are no longer using the aggregated/compact notation in matrix forms since it becomes inconvenient when $x_i$'s have different dimensions. But still as that in previous sections, the function $h_i\triangleq f_i+g_i$ is only assumed to be convex and already include the indicator function of the local constraint $x_i\in\Omega_i$ in itself. 

The optimality condition of \eqref{eq:F_coup} is
\begin{subequations}\label{eq:F_coup_opt}
	\begin{numcases}{}
	\sdhi_i(x_i^*)=A_i^\T \xi_i^*, \forall i,\label{eq:F_coup_opt1}\\
	\xi_1^*=\xi_2^*=\ldots=\xi_n^*,\label{eq:F_coup_opt2}\\
	\sum\limits_{i=1}^n(A_i x_i^*-r_i)=0,\label{eq:F_coup_opt3}
	\end{numcases}
\end{subequations}
where we have introduced a group of auxiliary variables $\xi_i\in\R^m,\ \forall i\in[n]$. Again, here we can spot the consensus relation \eqref{eq:F_coup_opt2} and the ``summing to zero'' relation \eqref{eq:F_coup_opt3}, similar forms of which have appeared in the optimality condition of the original resource allocation problem. Now if the condition \eqref{eq:F_coup_opt1} that bridges $x_i$'s and the auxiliary variables $\xi_i$'s is further met, then $\{x_i^*\}_{i=1}^n$ is an optimal solution of \eqref{eq:F_coup}. Let us denote $\sdh(\bx)\triangleq\left(\sdhi_1(x_1);\sdhi_2(x_2);\cdots;\sdhi_n(x_n)\right)\in\R^p$, $\bxi\triangleq\left(\xi_1;\xi_2;\cdots;\xi_n\right)\in\R^{mn}$, and $\eLap\triangleq\Lap\otimes I_m\in\R^{(mn)\times (mn)}$ where $I_m$ is an $m$-by-$m$ identity matrix and ``$\otimes$'' represents the Kronecker product. This way, \eqref{eq:F_coup_opt} is compactly rewritten as
\begin{subequations}\label{eq:F_coup_opt_comp}
	\begin{numcases}{}
	\sdh(\bx^*)=\bA^\T \bxi^*,\label{eq:F_coup_opt_comp1}\\
	\eLap\bxi^*=\zero,\label{eq:F_coup_opt_comp2}\\
	(\one\otimes I_m)^\T(\bA\bx^*-\br)=0,\label{eq:F_coup_opt_comp3}
	\end{numcases}
\end{subequations}

To eventually converge to a point that meets \eqref{eq:F_coup_opt_comp}, we can construct sequences that satisfy the following recursive relations:
\begin{subequations}\label{eq:updates_coup}
	\begin{align}
	&\bA\bx^{k+1}-\br+c\by^{k+1}+(\bB-c\eLap)(\bxi^{k+1}-\bxi^k)=\zero,\label{eq:up_coup_line1}\\
	&\sdh(\bx^{k+1})=\bA^\T\bxi^{k+1},\label{eq:up_coup_line2}\\
	&\sdh(\bx^k)=\bA^\T\bxi^k,\label{eq:up_coup_line3}\\
	&\by^{k+1}=\by^k+\eLap\bxi^{k+1},\label{eq:up_coup_line4}
	\end{align}
\end{subequations}
where $\bB=\text{blkdiag}\{B_1,B_2,\ldots,B_n\}\in\R^{(mn)\times(mn)}$ is a block diagonal matrix that contains all the local step sizes $B_i$'s. Each matrix step size $B_i$ is maintained by agent $i$ and $\bB-c\eLap\succcurlyeq0$ should be fulfilled. One can see that \eqref{eq:updates_coup} is nothing but mimicking \eqref{eq:updates2}, the recursive relations of Mirror-P-EXTRA (Algorithm 1), while preserving $\sdh(\bx^k)=\bA^\T\bxi^k$. One can verify that if  \eqref{eq:updates_coup} converges, its sequences $(\bx^k,\bxi^k)$ converge to $(\bx^*,\bxi^*)$. Finally recursion \eqref{eq:updates_coup} can be resolved as the following implementation
\begin{subequations}\label{eq:updates_coup_comp}
	\begin{align}
	&\bx^{k+1}=\arg\min_{\bx\in\Omega}\left\{\f(\bx)+\|\bx\|_{\bA^\T\bB^{-1}\bA}^2+\langle-\bA^\T\bB^{-1}\br-\bA^\T\bxi^k+2c\bA^\T\bB^{-1}(2\by^k-\by^{k-1}),\bx\rangle\right\},\label{eq:up_coup_comp_line1}\\
	&\bxi^{k+1}=\bxi^k-c\bB^{-1}(2\by^k-\by^{k-1})-\bB^{-1}(\bA\bx^{k+1}-\br),\\
	&\by^{k+1}=\by^k+\eLap\bxi^{k+1},\label{eq:up_coup_comp_line4}
	\end{align}
\end{subequations}
starting from $\bxi^0\in\R^p$, $\bx^0=\arg\min_{\bx\in\Omega}\f(\bx)+\langle\bA^\T\bxi^0,\bx\rangle$, $\by^{-1}=0$, and $\by^0=\eLap\bxi^0$.
The per-agent update form is listed below in Algorithm 5.

\smallskip
\begin{center}
	{\textbf{Algorithm 5: Mirror-P-EXTRA handling local couplings}}
	
\smallskip	
	\begin{tabular}{l}
		\hline
		\emph{  } Each agent $i$ chooses its own parameter matrix $B_i\in\R^{m\times m}$ and the same parameter $c>0$;\\
		% such that $B\succcurlyeq c\Lap \succcurlyeq0$;\\
		\emph{  } Each agent $i$ initializes with arbitrary $\xi_i^0\in\R^{p_i}$ and sets $x_i^0=\arg\min_{x_i\in\Omega_i} f_i(x_i)+\langle A_i^\T\xi_i^0,x_i\rangle$;\\
		\emph{  } Each agent $i$ initializes with $y_i^{-1}=0$;\\
		\emph{  } Each agent $i$ \textbf{for} $k=0,1,\ldots$ \textbf{do}\\
		\qquad$y_i^{k}=y_i^{k-1}+\sum_{j\in\Ni\bigcup\{i\}}\L_{ij}\xi_j^k$\\
		\qquad$x_i^{k+1}=\arg\min_{x_i\in\Omega_i} \left\{f_i(x_i)+\|x_i\|_{A_i^\T B_i^{-1} A_i}^2+\langle-A_i^\T B_i^{-1}r_i-A_i^\T\xi_i^k+2cA_i^\T B_i^{-1}(2y_i^k-y_i^{k-1}),x_i\rangle\right\}$;\\
		\qquad$\xi_i^{k+1}=\xi_i^k-cB_i^{-1}\left(2y_i^k-y_i^{k-1}\right)-B_i^{-1}(A_ix_i^{k+1}-r_i)$;\\
		\emph{  } \textbf{end}\\
		\hline
	\end{tabular}
\end{center}
It can be seen that this algorithm degenerates to Mirror-P-EXTRA (Algorithm 1) when $A_i=I$ for all $i$. 
\cred{We have not analyzed the convergence properties of this algorithm. We believe that its convergence behavior is similar to that of Mirror-P-EXTRA, since it is a generalization of Mirror-P-EXTRA with an intuitive modification introduced above.}

\section{Numerical Experiments}\label{sec:numer}
The experiments are conducted over a fixed undirected connected graph with 
$n=100$ vertices and $|\E|=198$ edges. 
The ``connectivity ratio'' of the undirected graph is defined by 
$r_\mathrm{c}=|\E|/(0.5n(n-1))=0.04$ (the average degree is $3.96$), where $0.5n(n-1)$ is the maximum 
number of edges an undirected graph can have. We generate the graphs randomly\footnote{To guarantee connectedness of the graph, we first grow a random tree; then uniformly randomly add edges into the graph to reach the specified connectivity ratio.}. 
The objective function $f_i$ of agent $i$ is given by
\[f_i(x_i)=0.5x_i^\T H_i^\T H_i x_i+b_ix_i,\]
where $H_i\in\R^{2\times2}$ and $b_i\in\R^{2\times2}$ have the entries 
generated by the normal distribution with zero mean and unit variance. 
For each agent $i$, the local constraint set is a box,
\[\Omega_i=\{(\omega_{i,1},\omega_{i,2})\in\R^2\big|
0\leq\omega_{i,1}\leq\overline{\omega}_{i,1},\ 
0\leq\omega_{i,2}\leq\overline{\omega}_{i,2}\},\] 
where the interval boundaries $\overline{\omega}_{i,j}$ are 
randomly generated following the uniform distribution over the interval 
$[1,2]$ for all $i\in[n]$ and $j=1,2$.

For each agent $i\in[n]$, the resource vector $r_i$ is the mean value of the interval constraints, i.e.,
$r_i=[\overline{\omega}_{i,1}/2;\overline{\omega}_{i,2}/2]\in\R^2$. 
If the optimal solution of the problem does not hit the boundary of the set $\Omg$, 
multiple trials are made to obtain the problem for which the constraint set $\Omg$ is active at the optimal solution.
Mirror-EXTRA cannot be applied to solve such problems, so we implement Mirror-PG-EXTRA that uses a gradient-projection update.

\an{In the experiments, we use the matrix $\Lap=0.5(I-W)$ 
	for both Mirror-P-EXTRA and Mirror-PG-EXTRA, 
	where $W$ is generated according to the Metropolis-Hasting rule.
	For Mirror-P-EXTRA we use $c=0.01/(\mu L\lams{\Lap})^{0.5}$, which is based on 
	Corollary~\ref{corollary:scalability}. 
	The constant factor $0.01$ is hand-tuned and found to be effective 
	for most of our randomly generated graphs %($r_\mathrm{c}=4/n=4/100$) 
	and randomly generated 
	$\f$, $\br$, and $\Omg$. To verify the viability of using different parameters $\beta_i$ across agents, 
	we choose $\beta_i=\phi_i c\laml{\Lap}$ for each $i$, 
	where $\phi_i$ is a random variable following the uniform distribution over the interval 
	$[1,1.5]$. For Mirror-PG-EXTRA we set $c=0.5/L$ and $\beta_i=\phi_i c$ for each agent $i$, 
	where $\phi_i$ is a random variable following the uniform distribution over the interval $[1,1.5]$.} 

To compare the competitiveness of the algorithms of Section \ref{sec:algos} with those in the existing literature, 
we implement the DPDA-S algorithm of~\cite{Aybat2016distributed}. DPDA-S has one system-level parameter 
$\gamma$ and two per-agent parameters, $\tau_i$ and $\kappa_i$. 
Based on the recommended parametric structure as given in Remark II.1 of~\cite{Aybat2016distributed},
which sets $\tau_i$ and $\kappa_i$ automatically to produce another parameter 
$c_i$, we tuned the parameter $c_i$ and $\gamma$ to obtain one plot for this algorithm. 
In another plot for this algorithm, we hand-optimized all the parameters to achieve a better performance. 
In Mirror-P-EXTRA, there is a system-level parameter $c$ 
which can be set based on the per-agent parameters $\beta_i$. 
Compared to Mirror-EXTRA, DPDA-S requires a finer tune in its parameters to achieve a competitive performance. The convergence curves are shown in Fig.~\ref{eps:ED_fig1}.

\begin{figure}[H]
	\begin{center}
		\vspace{-1em}
		\includegraphics
		[width=0.6\linewidth]{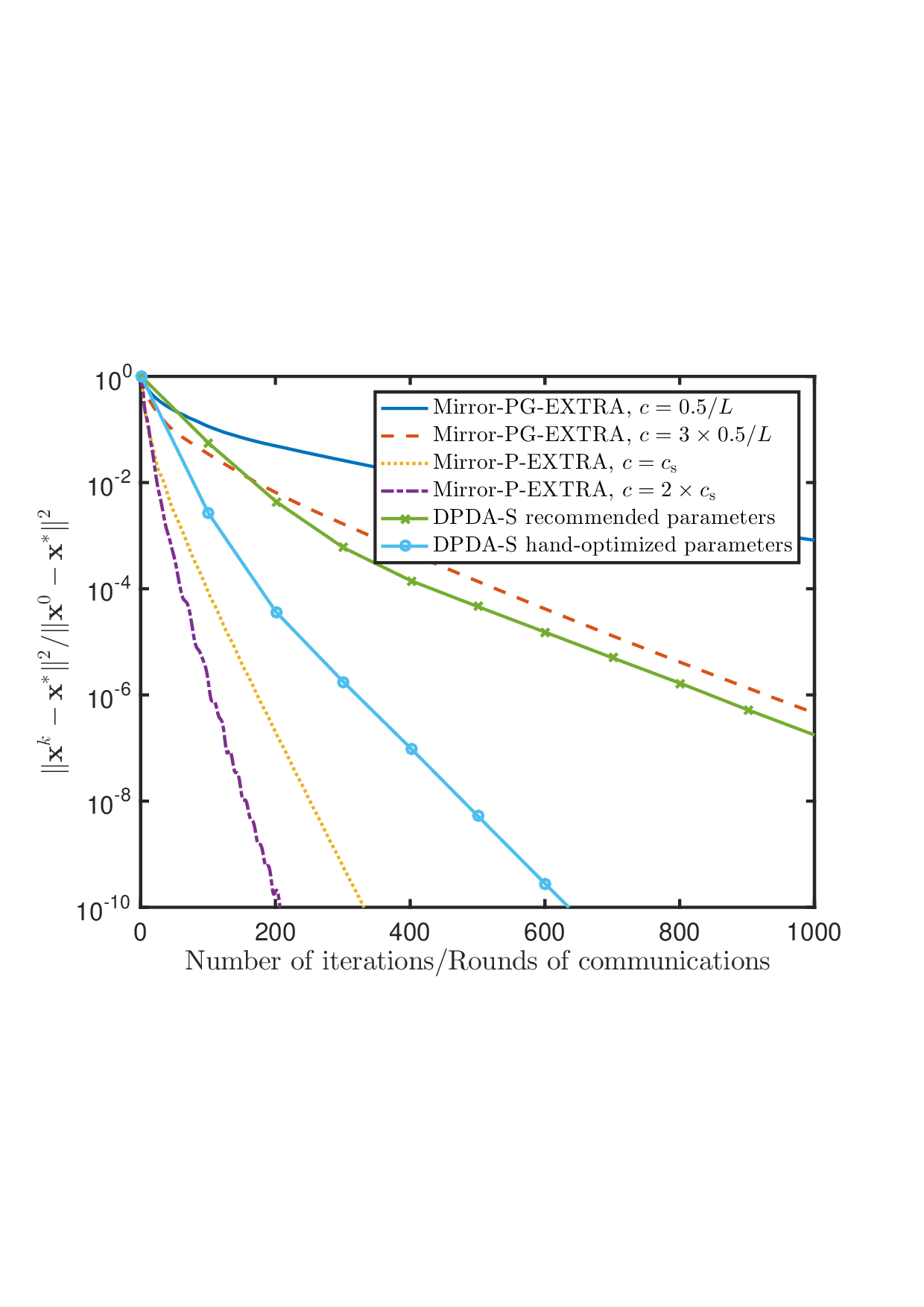}\vspace{-1em}
		\caption{Plots of the normalized residual $\frac{\|\bx^k-\bx^*\|_\mathrm{F}}{\|\bx^0-\bx^*\|_\mathrm{F}}$. The step size for Mirror-P-EXTRA, $c_\mathrm{s}=0.01/(\mu L\lams{\Lap})^{0.5}$, is based on Corollary \ref{corollary:scalability}. The constant $0.01$ in the numerator is hand-tuned and found to be effective 
			for most of our randomly generated graphs ($r_\mathrm{c}=4/n=4/100$) and randomly generated $\f$, $\br$, and $\Omg$. For Mirror-PG-EXTRA, a step size larger than $3\times0.5/L$ will lead to divergence in the current trial. For Mirror-P-EXTRA, the parameter $2\times c_s$ gives the fastest convergence speed in the current trial.\label{eps:ED_fig1}\vspace{-3em}}
	\end{center}
\end{figure}

\cred{In the above numerical test (see Fig. \ref{eps:ED_fig1}), the outcome of Mirror-P-EXTRA outperforming the other algorithms in the number of iterations is in expectation. Mirror-P-EXTRA has to pay extra computational effort (solving a constrained convex optimization problem) at each iteration compared to other competitive algorithms. However, in decentralized computing, it may worth it to perform more computations before exchanging information in the following round of communication because communication costs and delays are usually considered more significant compared to that caused by local computations. There is actually a trade-off between the total computational cost/time and the total communication cost/delay. Which algorithm is more preferable depends on the hardness of the specific optimization problem and the performance of the underlying cyber system.}

\begin{figure}[H]
	\begin{center}
		\vspace{-1em}
		\includegraphics
		[width=0.6\linewidth]{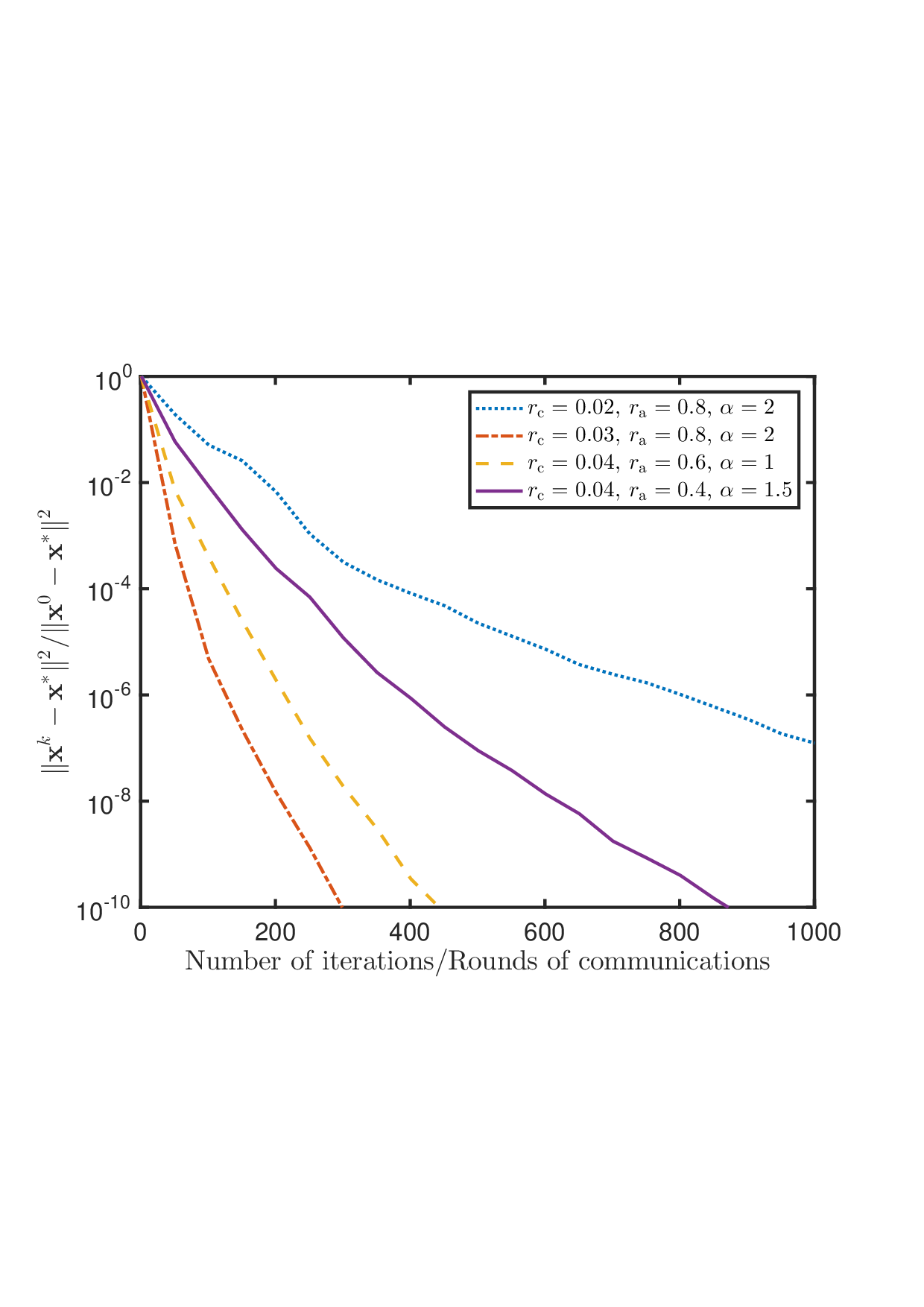}\vspace{-1em}
		\caption{Plots of the normalized residual $\frac{\|\bx^k-\bx^*\|_\mathrm{F}}{\|\bx^0-\bx^*\|_\mathrm{F}}$. The step sizes for Mirror-Push-DIGing are roughly hand tuned. $r_\mathrm{c}$ is the connectivity ratio and $r_\mathrm{a}$ is the activation ratio.\label{eps:ED_fig2}\vspace{-3em}}
	\end{center}
\end{figure}

To show the viability of the Mirror-Push-DIGing algorithm (see Section \ref{sec:ext}) for time-varying directed graphs, we randomly
randomly generate a directed graph with $n=100$ vertices. 
Such a graph can have at most $n(n-1)=9900$ directed links. 
We define the connectivity ratio of a directed graph on $n$ vertices by 
$r_\mathrm{c}=|\A|/n(n-1)$ where $\A$ is the arc set. For a time-varying graph sequence $\GTVdir(k)=\{\V,\A(k)\}$, the activation ratio at time $k$ is defined as $r_\mathrm{a}(k)=|\A(k)|/|\A|$, where $\A$ is the set of links of the underlying(fixed) digraph $\A=\bigcup_k\A(k)$. 
Thus, $r_\mathrm{a}(k)\times r_\mathrm{c}$ is equal to the connectivity ratio of $\GTVdir(k)$ at time $k$. When $r_\mathrm{a}(k)$ is a constant for all $k$, we drop the time index and simply use $r_\mathrm{a}$. In the experiment, we run Mirror-Push-DIGing over a few different time-varying directed graph sequences. 
The graph sequences are generated by uniformly randomly activating the arcs of the above mentioned underlying digraph. The results are plotted in Fig.~\ref{eps:ED_fig2}. 
More details of settings for $r_\mathrm{c}$, $r_\mathrm{a}$, and the step sizes $\alpha$ are given in the legend of Fig~\ref{eps:ED_fig2}. The step sizes for Mirror-Push-DIGing are roughly hand tuned to obtain relatively fast convergence.

\section{Conclusion}\label{sec:concl}
\an{In this paper, 
	we have presented an interesting relationship 
	between resource allocation problem and the consensus optimization problem. 
	Based on this relation,
	we have proposed two algorithms, namely Mirror-P-EXTRA and Mirror-EXTRA,
	for distributed resource allocation in a static connected undirected graph.
	We have established the convergence and convergence rate properties
	of the algorithms. In particular, we have shown that both of the algorithms enjoy 
	an $R$-linear convergence rate 
	when the resource allocation problem has strongly convex objective function with
	Lipschitz continuous gradients and does not have additional set constraints.
	We also have illustrated the convergence behavior of the Mirror-P-EXTRA and its computationally less 
	expensive projection-based variant (Mirror-PG-EXTRA) by some numerical experiments.}

\bibliographystyle{IEEEtran}
\bibliography{document.bib}
\end{document}